\documentclass[12pt]{amsart}
\usepackage[margin=1in]{geometry}
\geometry{a4paper}  
\usepackage{graphicx}
\usepackage{amsmath}
\usepackage{amssymb}
\usepackage{epstopdf}
\usepackage{subcaption}
\usepackage{amsthm}
\usepackage{xcolor}
\usepackage{mathrsfs}
\usepackage{enumitem}
\usepackage{bm}
\usepackage{bbm}

\usepackage[justification=centering]{caption}
\usepackage{makecell}

\newtheorem{theorem}{Theorem}[section]
\newtheorem*{theorem*}{Theorem}

\newtheorem{proposition}[theorem]{Proposition}
\newtheorem{corollary}[theorem]{Corollary}
\newtheorem{remark}[theorem]{Remark}
\newtheorem{assumption}[theorem]{Assumption}
\newtheorem{example}[theorem]{Example}
\newtheorem{definition}[theorem]{Definition}

\DeclareMathOperator{\argmin}{argmin}

\newcommand{\bbE}{\mathbb{E}}
\newcommand{\bbI}{\mathbb{I}}
\newcommand{\bbP}{\mathbb{P}}
\newcommand{\bbR}{\mathbb{R}}
\newcommand{\RR}{\mathbb{R}}

\newcommand{\calA}{\mathcal{A}}
\newcommand{\calC}{\mathcal{C}}
\newcommand{\calD}{\mathcal{D}}
\newcommand{\calF}{\mathcal{F}}
\newcommand{\calG}{\mathcal{G}}
\newcommand{\calL}{\mathcal{L}}

\newcommand{\calP}{\mathcal{P}}

\newcommand{\calQ}{\mathcal{Q}}
\newcommand{\calR}{\mathcal{R}}
\newcommand{\calS}{\mathcal{S}}

\newcommand{\calU}{\mathcal{U}}
\newcommand{\calV}{\mathcal{V}}

\newcommand{\scrL}{\mathscr{L}}
\newcommand{\scrM}{\mathscr{M}}
\newcommand{\scrP}{\mathscr{P}}
\newcommand{\scrS}{\mathscr{S}}

\newcommand{\Var}{\text{Var}}
\newcommand{\VaR}{\text{VaR}}

\newcommand{\peta}{p_{\eta^{\ast}}}
\newcommand{\qeta}{q_{\eta^{\ast}}}

\newcommand{\etastar}{\eta^{\ast}}

\newcommand{\nustar}{\nu^{\ast}}

\usepackage{multicol}
\usepackage[linesnumbered,lined,boxed]{algorithm2e}
\usepackage[foot]{amsaddr}
\usepackage{float}

\numberwithin{equation}{section}

\begin{document}
\title[Optimal reinsurance from an optimal transport perspective]{Optimal reinsurance from an optimal\\ transport perspective}
\date{}

\author[B. Acciaio]{Beatrice Acciaio}
\address[B. Acciaio]{Department of Mathematics, ETH Zurich, Rämistrasse 101, 8092 Zürich, Switzerland}
\email[B. Acciaio]{beatrice.acciaio@math.ethz.ch}

\author[H. Albrecher]{Hansj\"org Albrecher}
\address[H. Albrecher]{Department of Actuarial Science, Faculty of Business and Economics and Swiss Finance Institute, University of Lausanne, UNIL-Chamberonne, CH-1015 Lausanne, Switzerland}
\email[H. Albrecher]{hansjoerg.albrecher@unil.ch}

\author[B. García Flores]{Brandon García Flores}
\address[B. García Flores]{Department of Actuarial Science, Faculty of Business and Economics, University of Lausanne, UNIL-Chamberonne, CH-1015 Lausanne, Switzerland}
\email[B. García Flores]{brandon.garciaflores@unil.ch}

%\Bea{
% % is abs still ok? eg 1st sentence quite strong wrt what we do, and not sure about finite-dim opt pb either..is it again finitely many?\\
% {\color{red}H: I edited the abstract a bit further, should be fine now.
% I changed it a bit, but I think the abstract was still ok. In section 5 we do see the reinsurance problem as an iterated OT problem, although I agree that by having it in the first sentence it seems to be the focus of the paper. The finite-dimensional part is sort of implicit. For example, Example 6.1 is only partially solved: one needs to obtain the optimal $\lambda*$ and $\sigma$, in Example 6.3 the optimal $v^*$, etc, but these are ``standard'' optimization problems. 
%}}
\begin{abstract}
% We regard the optimal reinsurance problem as an iterated optimal transport problem between a (known) initial and an (unknown) resulting risk exposure of the insurer. We also provide conditions that allow to characterize the support of optimal treaties, and show how this can be used to deduce the shape of the optimal contract, reducing the task to a finite-dimensional optimization problem, for which standard techniques can be applied. The proposed approach provides a general framework that encompasses many reinsurance problems, which we illustrate in several concrete examples, providing alternative proofs of classical optimal reinsurance results, as well as establishing new optimality results, some of which contain optimal treaties that involve external randomness.
{\color{black} We use the randomization idea and proof techniques from optimal transport to study %general  
optimal reinsurance problems. %For a class of optimal reinsurance problems, 
We start by providing conditions for a class of problems  that allow us to characterize the support of optimal treaties, and show how this can be used to deduce the shape of the optimal contract, reducing the task to an optimization problem with finitely many constraints, for which standard techniques can be applied. For a more general class of problems, we regard the optimal reinsurance problem as an iterated optimal transport problem between a (known) initial risk exposure of the insurer and an (unknown) resulting risk exposure of the reinsurer. The proposed approach provides a general framework that encompasses many reinsurance problems, which we illustrate in several concrete examples, providing alternative proofs to classical optimal reinsurance results, as well as establishing new optimality results, some of which contain optimal treaties that involve external randomness.}
\end{abstract}
% \Bea{i'm ok with new abstract, except the arrival measure in the transport is not the resulting exposure of insurer but that of the reinsurer}
\keywords{Optimal reinsurance, optimal transport, risk measures}
\maketitle

% \Bea{cf previous version and put in blue all new things, starting from new literature here}
\section{Introduction}
The identification of optimal reinsurance forms is a classical problem of actuarial risk theory. Starting with pioneering work of de Finetti \cite{definetti1940}, Borch \cite{borch1960attempt,borch1960safety} and Arrow \cite{arrow63}, with varying objective functions, constraints and choices of involved contract parties, the topic has been a rich source of interesting mathematical problems and is still a very active field of research, see for instance \cite{centeno} and \cite[Ch.8]{albrecher2017reinsurance} for an overview. Among the many conceptual and influential contributions to the topic over the last years, we mention here Kaluszka \cite{kaluszka2004mean}, Cai \& Tan \cite{cai2008optimal}, Balbas et al.\ \cite{balbas2015optimal,balbas2022risk}, and {\color{black} Cheung et al.}\ \cite{cheung2010optimal,cheung2014risk,cheung2014optimal,cheung2017characterizations} {\color{black}as well as \cite{yong2024optimal}}. For game-theoretic approaches to equilibria and efficient solutions, we refer the reader to e.g.\ \cite{asimit2018insurance,boonen2021bowley,zhu2023equilibria,boonen2023bowley}, and for a situation with several reinsurers, to Boonen \& Ghossoub \cite{boonen2021optimal}. An interesting link of optimal reinsurance problems to the Neyman-Pearson lemma of statistical hypothesis testing was established by Lo \cite{lo2017neyman}, encompassing earlier contributions such as \cite{zhuang2016marginal}; see \cite{cheung2023multi} for a recent considerable generalization of this approach. For a backward-forward optimization procedure in a rather general setting, we refer to Boonen \& Jiang \cite{boonen2022marginal}.

An intuitive practical constraint when looking for an optimal reinsurance contract is its deterministic nature, i.e., the reinsured amount is identified deterministically once the claim size of the first line insurance company is known. At the same time, one may imagine scenarios where a randomized contract (a contract where, beside the original claim size, an additional exogeneous random mechanism is used to determine the eventual reinsured amount) leads in fact to a better solution of the original optimization problem. In this case, once the realizations of the original claim sizes are available, a defined additional random mechanism is used to determine the eventual reinsured and retained amount. For instance, the proportionality factor in a proportional reinsurance contract, or the retention in an Excess-of-Loss contract, may be determined by the outcome of a random experiment with a defined distribution, which might practically be realized with the help of a lottery, or a random number simulator that all involved parties agree upon and that could be applied in the presence of a notary. While it is not straightforward to overcome the psychological barriers of such a contract formulation, if the objective and constraints of the optimization problem match the true goals of the involved parties, such a randomized solution should be preferable in case it dominates all the deterministic solutions (see \cite{albrecher2019randomized} for an extensive discussion). Gajek \& Zagrodny \cite{gajek2004reinsurance} were the first to point out such a randomized solution in a particular setup with a discrete loss distribution, where the goal was to minimize the ruin probability of the insurer for a given budget constraint. In \cite{albrecher2019randomized}, \cite{asimit2021risk} and \cite{vincent2021} it was then shown how  random reinsurance treaties can be optimal in more general situations, with problem formulations that are closer to actuarial practice. In fact, even if an insurer prefers to restrict the risk management to purely deterministic reinsurance contracts, there often is an additional random element present in any case, such as the reinsurance counterparty default risk; see e.g.\ \cite{asimit2013optimal,cai2014optimal}.

{\color{black} To express it in a simple way, the reinsurance problem can be stated as
\begin{equation}
\tag{R}\text{minimize risk}\; \mathcal{P}(\eta)\quad \text{over reinsurance treaties} \;\eta
\end{equation}  
\centerline{\text{\emph{(with $\eta$ deterministic, resp.\ random)}}}\\[0.15cm]
where $\mathcal{P}(\cdot)$ is a risk measure and $\eta$ are all feasible reinsurance contracts fulfilling the constraints of the particular problem. From a purely mathematical perspective, Guerra \& Centeno \cite{guerra2012quantile} looked for optimal reinsurance contracts in a specific situation, found solutions of a potentially randomized form for their particular one-dimensional model setup, and showed that there always exists an optimal nonrandom treaty, so that their randomization feature could be interpreted merely as a mathematical tool.  
In a certain way, part of the subsequent analysis can be seen as a generalization of their approach to arbitrary risk measures in a multi-dimensional setting, where additional constraints on the solutions are allowed, and where deterministic treaties are not necessarily optimal anymore. 

In the present paper, we take a new perspective on the reinsurance problem, that allows us to use tools and ideas from the optimal transport (OT) theory. 
Roughly speaking, the latter is concerned with transporting mass from a source distribution $\mu$, to a target distribution $\nu$, while minimizing the total transport according to a cost function $c$:
\begin{equation}
\tag{T}\text{minimize transport cost}\;\textstyle{\int} c\ d\pi\quad\text{over couplings}\;\pi\; \text{of the marginals} \;\mu,\nu
\end{equation}  
\centerline{\text{\emph{(with $\pi$ supported on the graph of a map, resp.\ not).}}}\\[0.15cm]
The OT theory is an extremely active field of research, that has played a key role in many areas of applied mathematics ranging from PDEs, image processing, inverse problems, sampling, optimization, finance and economics to machine learning. We refer to the monographs by Villani~\cite{villani2021topics},  Santambrogio~\cite{santambrogio2015optimal} and Ambrosio et al.~\cite{ambrosio2021lectures} for an overview of the theory, as well as to \cite{peyre2019computational} for the computational development, and to \cite{galichon2018optimal,henry2017model} for applications in economics and finance.

Here we look at the reinsurance contract as a mechanism to reshape the loss distribution $\mu$ of the first-line insurer into a less risky one. The reinsurer enables that improvement by participating with a loss distribution $\nu$ in the coverage of the claim, and will ask for a reinsurance premium for this service. 
%This is done by transferring some risk from the insurer, who has a given initial loss distribution $\mu$, to a reinsurer, that, as a result, will have a loss distribution $\nu$, to be found as a result of some optimal reinsurance problem. 
This point of view prompts the analogy between problem (R) and problem (T). Here, the treaty $\eta$ takes the role of the coupling $\pi$ between the initial loss distribution $\mu$ of the insurer and the loss distribution $\nu$ of the reinsurer.
The criteria chosen to set up this optimization problem define the cost faced by the insurer to transfer the risk. 
%In our framework, w
When restricting our attention to deterministic reinsurance contracts, we confine the OT problem to particular types of couplings, supported on the graph of a map, known as Monge transports \cite{monge1781memoire}. This means that the final loss of the insurer (and correspondingly also that of the  reinsurer) is simply a deterministic function of the original loss.  On the other hand, allowing for randomized reinsurance treaties corresponds to allowing general couplings, as in the general so-called \textit{Kantorovich formulation} of the OT problem \cite{Kantorovich}. In this case, the original loss size alone does not necessarily already determine the reinsured amount. 

We need to point out some obstacles on the way of our goal to connect the optimal reinsurance to the optimal transport problem.  First of all, while in OT the two marginals are known, in the reinsurance setting typically only the first marginal $\mu$, that is the loss distribution of the insurer (before reinsurance) is initially known, while the target distribution $\nu$, that is the loss distribution of the reinsurer, needs to be found as a result of the optimization (typically, one looks for the best improvement of the risk measure $\mathcal{P}$ for a given budget constraint). This means that we have an additional layer of complexity to consider in our problem, which in some cases can be expressed as a further optimization over all possible target distributions.
Another fundamental difference is that, while in OT problems the cost functional is linear in the measure, in optimal reinsurance problems -- even for simple risk measures such as the variance -- linearity does not hold. Consequently, in order to establish a link and apply OT techniques, we need to linearize the latter. The easiest way to that end is by taking derivatives, but already at this point one runs into challenges since there are several notions of differentiability in infinite dimensions (e.g., the Fréchet or Gateaux type; and even for the latter one might need to impose linearity, continuity, etc.). Notice that one cannot ask for a very strong notion of differentiability, since this implies continuity on opens sets (in the space of signed measures), which is hardly satisfied for situations of practical interest (e.g., not even for the expectation operator). For our purposes, it will turn out to be sufficient (and lead to interesting results already) that the directional derivatives exist and that they are convex linear.\footnote{\color{black}By \textit{convex linear} we mean that the linearity property is only satisfied in convex combinations of the arguments, i.e. $T$ is convex linear if $T(sx+(1-s)y)=sT(x)+(1-s)T(y)$, $0\leq s\leq 1$.}
%(i.e., they satisfy the definition of a convex function with an equality instead of an inequality). 
If we suppose that these derivatives are integral operators, then, for simple constraints, it turns out that we can often characterize the support of optimal treaties up to a finite set of parameters; see Section~\ref{sec3} for details. In the case of more general constraints, it will be possible to recast the problem as an iterated OT problem, which will allow some further insight and deductions; see Section~\ref{secOT} for details.\\
}

{\color{black}\noindent The main results of the paper are Propositions \ref{prop:support_minimizer} and \ref{prop:suppport_minimum}, as well as the discussion in Section \ref{secOT}, which provide the principal tools to explicitly solve reinsurance problems in Section \ref{sec5}. In addition, on a more heuristic level, we believe that a main contribution of our work is the invitation to rethink the classical setting in optimal reinsurance from an optimal transport perspective. In this sense, Proposition \ref {remark:monge_1} and Remark \ref{prop:monotonic_rea} show that, in dimension 1, the condition that reinsurance amounts can not exceed the original claim size (concretely, Condition \eqref{thatt} in the next section) is equivalent to optimization over measures stochastically dominated by the distribution of the insurer's risk. That is,  classical conditions for reinsurance contracts can be rephrased into well-studied concepts of distributions. Example~\ref{ex:ex6} then represents a concrete case where adopting the approach proposed in this paper leads to significant new results.\\}

{\color{black}
\noindent{\bf Notations.} 
For $p\in\mathbb{N}$ and a measurable set $A\subseteq \bbR^{p}$, we denote by $\scrP(A)$ the set of probability measures on the Borel sets of $A$. For $\xi\in\scrP(\bbR^{p})$ and $X\sim\xi$, with an abuse of notation, we use both $F_{\xi}$ and $F_X$ for the cumulative distribution function. We denote by $\bbR^{p}_{+}$ the subset of $\bbR^{p}$ where all the coordinates are non-negative.
For $\mu,\nu\in\scrP(\bbR^p)$, we write $\nu\prec_1\mu$ if $F_\nu(x)\geq F_\mu(x)$
%$\nu([x,\infty)])\leq \mu([x,\infty)])$ 
for every $x\in \bbR^p$. For $p,q\in\mathbb{N}$, the push-forward measure of $\xi \in \scrP (\RR^p)$ through a measurable map $f: \RR^p \rightarrow \RR^q$, is the probability measure $f_\#\xi\in \scrP (\RR^q)$ such that $f_\#\xi(A)=\xi(f^{-1}(A))$ for any Borel set $A$ in $\RR^q$.  
If $V=\bbR^{n}$ and $v\in V$, we understand the inequalities $f(u)\leq v$, $f(u)<v$, etc. as holding component-wise (with this convention, $a\not\leq b$ does not imply $b<a$ unless $n=1$).\\

\noindent{\bf Organization of the paper.} The rest of the paper is organized as follows.
In Section~\ref{sect.connect} we recall the reinsurance problem and the optimal transport problem, pointing out the connections between the two, as well as the challenges one faces when trying to rephrase the former via the latter. 
In Section~\ref{sec:random_ri} we define the general optimal reinsurance problem considered in this paper. Section~\ref{sec3} then considers in more detail the situation with finitely many constraints. Here we show how this situation can be cast into a setup analogous to Lagrange optimization of a multivariate function under constraints. In Section~\ref{secOT}  we consider a more general set of constraints, and recast} the optimal reinsurance problem into the framework of an (iterated) optimal transport problem, which involves a local linearization of the optimization problem. The established link of the two fields then allows a characterization of reinsurance problems with purely deterministic optimal treaties. In Section~\ref{sec5} we then apply the results to a number of examples, some of which rederive classical optimality results by alternative means, while others lead to new results, and we compare these to existing literature. Some technical derivations in these examples are deferred to the appendix. Finally, Section~\ref{secconc} concludes. 
\section{{\color{black}The link between optimal reinsurance and optimal transport}}\label{sect.connect}
{\color{black}
This section is devoted to give the core idea of the link between reinsurance problems and optimal transport. The precise respective relations and statements will then be worked out in the later sections.\\

\noindent{\bf The reinsurance problem.}
Consider a fixed probability space $(\Omega,\mathscr{F},\bbP)$, and a non-negative random vector $X=(X_{1},\ldots,X_{n})$ defined on it. $X$ represents a portfolio of $n$ risks of one or more insurers, which are sought to be partially reinsured. We denote the distribution of each $X_i$ by $\mu_i$, the joint distribution of all $X_i$'s by $\mu$, and we assume that each $X_{i}$ has a finite first moment. In the classical formulation of optimal reinsurance problems, $X$ is defined in some function space $\scrL$ (normally an $L^{p}$-space) and the objective is to find the best contract according to a pre-specified \textit{risk measure}, i.e., a functional $\calP:\scrL\to \bbR$ 
representing the risk remaining after the treaty. A reinsurance contract is then given by a sequence of functions $f_i:\bbR_{+}^n\to \bbR_{+}$ such that 
\begin{equation}\label{thatt}
    0\leq f_i(x_1,\ldots,x_n)\leq x_i\quad \text{for}\;(x_1,\ldots,x_n)\in \bbR_{+}^n\;\text{and}\;i=1,\ldots,n, 
\end{equation}
where $f_i(X)$ is the reinsured amount, and $X_i-f_i(X)$ is the amount regarding risk $X_i$ that is retained by the insurer. Condition \eqref{thatt} ensures that these quantities are non-negative\footnote{Note that in this general formulation, the reinsured amount in contract $i$ is allowed to depend on the claim sizes of the other contracts as well, which in classical contracts is usually not the case, but we are interested here to explore whether that can lead to improved solutions.}. The optimal reinsurance problem is usually accompanied by a set of constraints (beliefs, conditions or requirements) that either of the involved parties might require.

For example, if the reinsurance premiums are calculated according to an expectation principle, the first-line insurer might ask for $\bbE[f_i(X)]\leq c_i$ for some $c_i> 0$, ensuring that reinsurance is not too expensive. All such constraints can be represented by a set $\calS$ of tuples of functions, in the sense that a contract $(f_1,\ldots, f_n)$ is allowed if and only if 
$
(f_1,\ldots,f_n)\in \calS.
$
In this example, $\calS$ could then simply be 
\[
\calS = \{(f_1,\ldots,f_n) \mid (f_1(X),\ldots,f_n(X))\in \scrL, \bbE[f_i(X)]\leq c_i, i=1,\ldots, n\}.
\]
The problem formulation then is: find a sequence of functions $f_1^{\ast}, \ldots, f_n^{\ast}$ such that
\begin{equation}\label{eq:reins}
\calP(f_1^{\ast}(X), \ldots, f_n^{\ast}(X)) = \min\limits_{\substack{0\leq f_i(x)\leq {x_i}\\ (f_1,\ldots,f_n)\in \calS}} \calP(f_1(X), \ldots, f_n(X)).
\end{equation}
\vspace{0.1cm}

\noindent{\bf Deterministic reinsurance treaties and the Monge problem.}
The idea of this manuscript is to leverage ideas from the theory of optimal transport to solve reinsurance optimization problems like the one stated above. This link looks natural, as in its foundations, optimal transport served as the study of optimal mass transfer, where the role of mass in the reinsurance setting is the ``risk'' expressed through the density function (or probability mass function) of the insurer, and reinsurance reshapes that risk into a new (safer) density function. 
%service to reshape the insurer's loss density is charged by the reinsurer as a reinsurance premium. 
%and to take over (away) some of that mass 

In the original transport formulation by Monge \cite{monge1781memoire},  given a `source' distribution $\mu$ in $\bbR^n$, a `target' distribution $\nu$ in $\bbR^n$, and a cost function $c:\bbR^n\times\bbR^n\to[0,\infty)$ (where $c(x,y)$ expresses how costly is to `transport' a unit of mass from a position $x$ to a new position $y$), one looks for functions $g:\bbR^n\to \bbR^n$ that map (transport) $\mu$ into $\nu$, in the sense that $\nu(A) = \mu(g^{-1}(A))$ for all Borel measurable sets $A\subset \bbR^n$.\footnote{We keep the same dimension $n$ for both $\mu$ and $\nu$ as this is what matters in the present paper, although the OT problem can be dealt with in more generality.}
The \textit{Monge formulation} of the OT problem is then to find the map $g^*$ from $\mu$ to $\nu$ that minimizes the aggregate transport cost, so that
\begin{equation}\label{eq:motivation_2}
    \int_{\bbR^n} c(x,g^{\ast}(x))\mu(dx) = \inf_{\substack{\nu(A) = \mu(g^{-1}(A))\\ \forall A \text{ Borel}}} \int_{\bbR^n} c(x,g(x))\mu(dx).
\end{equation}
By collecting the functions $f_i^\ast$ appearing in the reinsurance setting into a single function $f^\ast:\bbR^n\to\bbR^n$, one can notice a resemblance between problem \eqref{eq:reins} and \eqref{eq:motivation_2}. In particular, the analogue of the cost in the OT formulation is the risk measure (and not the reinsurance premium) to be minimized. The reinsurance premium enters as a constraint on feasible contracts (e.g.\ through the definition of the set $\calS$ as above).

%Note that in that analogy, the reinsurer does not take away mass from the insurer (as the density function of the retained risk of the insurer again integrates up to 1), but the reshaping leads to payments of the reinsurer in exchange of which he charges the reinsurance premium.\\ 

\begin{example}\label{ex:ex1intro}\normalfont
To make the analogy more explicit with a concrete example, consider $n=1$ and assume that the insurance risk $X$ with distribution $\mu$ has finite variance. Let $\calP$ and $\calS$ be given by 
\begin{equation}\label{introvar}
	\calP(f)= \text{Var}(X-f(X))=\int (x-f(x))^{2}\,\mu(dx) - \left(\int (x-f(x)) \, \mu(dx)\right)^{2}
\end{equation}
and $\calS=\{f\mid \int f(x)\mu(dx) = c\}$ for some $c\geq 0$. This is the classical example (see e.g.\ \cite{Pesonen1984}) where one looks for a deterministic reinsurance of the form $f(X)$ and the objective is to minimize the retained variance of the insurer subject to a fixed reinsurance premium which is computed through the expected value principle. The optimal reinsurance problem can then be written as 
\begin{equation}\label{prec10}
\begin{split}
\inf_{f\in \calS}\calP(f)&= \inf_{f\in S}\int_{\bbR\times\bbR}(x-f(x))^2\mu(dx)-\left(\int_{\bbR\times\bbR}(x-f(x))\mu(dx)\right)^2 \\
&= \inf_{f\in \calS}\int_{\bbR\times\bbR}(x-f(x))^2\mu(dx)-\left({\mathbb E}(X)-c\right)^2.
\end{split}
\end{equation}
Now, let $\calS'$ denote the set of distributions in $\scrP(\bbR)$ with expectation $c$ which are push-forwards of $\mu$ by reinsurance contracts. Then, a distribution $\nu$ with mean $c$ belongs to $\calS'$ if and only if $\nu=f_\#\mu$ for some reinsurance contract $f$. The infimum in the last line of \eqref{prec10} can be written as
\begin{equation}\label{prec10_2}
\inf_{f\in \calS}\int_{\bbR\times\bbR}(x-f(x))^2\mu(dx) = \inf_{\nu \in \calS'}\inf_{\substack{\nu(A) = \mu(f^{-1}(A))\\ \forall A \;\text{Borel}}}\int_{\bbR\times\bbR}(x-f(x))^2\mu(dx).
\end{equation}
% due to the restriction to deterministic reinsurance treaties, boils down to the problem of finding $f^*$ such that
% \begin{equation}\label{prec12}
%    \int_{\bbR} c(x,f^{\ast}(x))\mu(dx) = \inf_{\substack{\nu(A) = \mu(f^{-1}(A))\\ \forall A \;\text{Borel}}} \int_{\bbR} c(x,f(x))\mu(dx)
% \end{equation}
Notice that, for fixed $\nu\in \calS'$, the inner infimum on the right-hand side of \eqref{prec10_2} is an OT problem of the form \eqref{eq:motivation_2} for the cost function $c(x,y)=(x-y)^2$, so solving the OT problem will provide information about the solution to the stated optimal reinsurance problem. 

We recognize that while \eqref{prec10_2} helps connect the theory of OT to optimal reinsurance, this might in principle over-complicate things, considering the complex description of the set $\calS'$. Part of our results shows that this set can be replaced by a more manageable set that makes no reference to reinsurance contracts. We will return to the solution of the example later in the paper, in Example \ref{ex:ex1}.\hfill $\diamond$ 
\end{example}
\vspace{0.1cm}

\noindent{\bf Random reinsurance treaties and the Kantorovich problem.}
So far in this section, we restricted the considerations to deterministic reinsurance forms (deterministic functions $f$). However, the OT approach allows to be much more general. Instead of insisting on maps that move $\mu$ into $\nu$, one may allow `mass splitting', meaning that the mass sitting on a point $x$ needs not be transported into a unique place $y$. Mathematically this translates into searching for joint distributions $\pi$ on $\bbR^n\times\bbR^n$ with marginals $\mu$ and $\nu$ (called 'couplings' of $\mu$ and $\nu$). This means that the mass sitting on $x$ is now moved along the disintegration $\pi^x$ of $\pi$ w.r.t. $\mu$.\footnote{{\color{black}With disintegration of $\pi$ w.r.t. $\mu$ we mean that $\pi(dx,dy)=\mu(dx)\pi^x(dy)$.}} The \textit{Kantorovich formulation} of the OT problem then reads as
\begin{equation}\label{eq.ot_intro}
\inf_{\pi\in\Pi(\mu,\nu)}\int_{\bbR^n\times\bbR^n}c(x,y)\pi(dx,dy),
\end{equation}
where $\Pi(\mu,\nu)$  is the set of couplings (or transport plans) between $\mu$ and $\nu$,
\begin{equation}\label{eq.pimunu}
\Pi(\mu,\nu)=\{\eta\in\scrP(\bbR^{n}\times\bbR^{n})\mid {\pi_1}_\#\eta=\mu, {\pi_2}_\#\eta=\nu\},
\end{equation}
with
$\pi_{i}:\bbR^{n}\times\bbR^{n}\to\bbR^{n}$, $i=1,2$  projections into first and second marginal, respectively (see \cite{villani2021topics,ambrosio2021lectures}).%In probabilistic terms this rewrites as $\inf_{X\sim\mu, Y\sim\nu}\bbE[c(X,Y)]$.
\footnote{{\color{black}Recall that the push-forward measure ${\pi_i}_\#\eta$ is the probability measure on $\scrP (\RR^n)$ such that ${\pi_i}_\#\eta(A)=\eta({\pi_i}^{-1}(A))$, for any Borel set $A$ in $\RR^n$.}} 
The success of formulation \eqref{eq.ot_intro} is due to the fact that, in contrast to \eqref{eq:motivation_2}, it is a linear problem and always well-posed (the set of couplings is always non-empty, as it contains the independent coupling $\pi(dx,dy)=\mu(dx)\nu(dy)$).
An optimizer $\pi^*$ of \eqref{eq.ot_intro} is called optimal coupling. If it is concentrated on the support of a map $g:\bbR^{n}\to\bbR^{n}$, i.e.\ $\pi^*(dx,dy)=\mu(dx)\delta_{g(x)}(dy)=(\mathrm{Id},g)_\#\mu$ (where $\mathrm{Id}$ is the identity function), then it is called an optimal Monge coupling, and $g$ is an optimal Monge map.

In reinsurance terms, allowing for this mass split corresponds to admitting \emph{random reinsurance treaties}, i.e.\ treaties for which the reinsured (and the retained) loss is not simply function of the original insurance loss, but may involve some additional random component, reflected in the coupling $\eta$.
%the best solution to specify the reinsured (and the retained) loss from the original insurance loss may involve some additional random component, which will be reflected in the coupling $\eta$. 
In the setting of Example \ref{ex:ex1intro}, this means formulating \eqref{prec10} in terms of random reinsurance treaties. The effect of this is simply a rewriting of the inner infimum on the right-hand side of \eqref{prec10_2} in terms of the Kantorovich formulation \eqref{eq.ot_intro}. If the solution to the inner infimum always leads to a non-deterministic coupling between $\mu$ and $\nu$  (i.e., a coupling that does not have support on the graph of a function), then that carries over to the optimal reinsurance form as well. We will see later in the paper that for Example \ref{ex:ex1intro} the deterministic solution cannot be outperformed by any randomized one. On the other hand, we will identify other reinsurance problems for which randomized solutions are indeed preferable. This shows how the OT approach is a natural tool to study the optimality of random reinsurance treaties. In the next sections we will introduce this concept more formally.\\

\noindent{\bf Challenges in the connection between optimal reinsurance and optimal transport.}
We summarize in Table~\ref{tab:ot_and_reinsurance} below the resemblance between the optimal reinsurance and the optimal transport problems, which can be thought of as a guide on how the ingredients from OT map into the reinsurance setting.

\begin{table}[h]
{\color{black}
    \centering
    \begin{tabular}{|c|c|}
    \hline
        \thead{OT} & \thead{Reinsurance}\\
        \hline
        Pre-fixed target measure $\nu$& \makecell{Constraints on the contract characteristics, \\ but $\nu$ not yet specified}\\
        \hline
        Cost (integral over $c$) & Risk measure (e.g. variance of the remaining risk)\\
        \hline
        Monge maps $g$ & Classical reinsurance contract $f$\\
        \hline
        Kantorovic coupling $\pi$ & Random reinsurance treaty $\eta$\\
        \hline
    \end{tabular}
    \caption{{\color{black}Analogy between the OT problem and the reinsurance problem}}
\label{tab:ot_and_reinsurance}
}
\end{table}
By looking at Table~\ref{tab:ot_and_reinsurance}, it is clear that not every choice of risk measure $\calP$ and constraints set $\calS$ will result in a proper OT problem. Even more, there are some fundamental aspects that differ in the reinsurance and transport problems.

\emph{(i)} First, the target measure $\nu$  is fixed (pre-given) in the OT setting, while the optimal reinsurance problems typically do not specify the target (reinsurer's loss) distribution $\nu$, but aim to find the best way to minimize the risk measure $\calP$ for the given budget (reinsurance premium). Therefore, when we want to utilize OT techniques for the solution of reinsurance problems, we will have to introduce two steps: first, finding the best mapping for a given $\nu$, and then, optimizing over all feasible choices of $\nu$. 

\emph{(ii)} Further, the total cost appearing in \eqref{eq.ot_intro} is linear in $\pi$. As linearity of the risk measures would be a rather restrictive property to impose, we instead explore the idea of (locally) linearizing the reinsurance problem, a technique often employed within the context of finite-dimensional optimization. 

\emph{(iii)} Another important factor in the correspondence of reinsurance and transport problems is that the latter is posed in terms of distributions. In order to transfer this property to the reinsurance setting, we will assume throughout the paper that the risk measure $\calP$ is law-invariant, which is commonly required in the optimal reinsurance literature and will allow us to phrase the optimization problems in terms of the distribution of $X$. Notice that we are also required to impose that the set of constraints $\calS$ can be described by means of the distribution of $X$.

Given the above comments, it is clear that one cannot simply apply tools from transport theory in order to solve reinsurance problems. The point of the manuscript is, however, not to provide a one-to-one correspondence between optimal reinsurance problems and OT problems, but instead to examine which ideas from OT translate into the reinsurance setting. In the subsequent sections, we delve into specific assumptions about $\calP$ or $\calS$ that allow us to reach more concrete conclusions about optimal contracts, while trying to preserve a level of generality suitable for a wide range of applications.
For example, one crucial step in OT is the identification of the support of optimal couplings. Following this intuition, one can see how most of the results obtained in Section~\ref{sec3} are based on the idea of identifying the support of optimal couplings, which in turn serves as base for the examples developed in Section~\ref{sec5}.

}

\section{Problem formulation}\label{sec:random_ri}
% One of the basic ideas in OT is the passage from the Monge's formulation of OT to the Kantorovich's formulation, which is a way of convexifying the minimization problem in \eqref{eq:motivation_2}. The idea is to ``randomize'' the functions $g$ and work instead with the joint distribution of $\tilde{X}$ and this randomization. Formally, instead of representing the mass transfer by functions, one uses probability distributions $\pi$ on $\bbR^N\times\bbR^M$ such that the first marginal of $\pi$ is $\mu$, while its second marginal is $\nu$. The generalization of the minimization problem in \eqref{eq:motivation_2} is now to look for a probability distribution on $\bbR^N\times\bbR^M$, $\pi^\ast$, such that
% \begin{equation}\label{eq:motivation_4}
%     \int_{\bbR^N\times \bbR^M} c(x,y)\;\pi^\ast(dx, dy) = \inf_{\substack{\pi \text{ with marginals}\\ \mu \text{ and } \nu}} \int_{\bbR^N\times \bbR^M}c(x,y)\;\pi(dx, dy).
% \end{equation}

% Within the context of reinsurance, the equivalent notion to the introduction of Kantorovich solutions as in \eqref{eq:motivation_4} is that of \textit{random reinsurance treaties}:
{\color{black}We introduce reinsurance treaties as a multi-dimensional generalization of the concept used in \cite{guerra2012quantile} for the one-dimensional case (see also \cite{guerra2021reinsurance} for another direction in the multi-dimensional case). 
\begin{definition}\label{def:ReInTreaty}
A (random) reinsurance treaty for the portfolio $X\sim\mu\in \scrP(\bbR^n_+)$ is a probability measure $\eta\in \scrP(\bbR^n_+\times \bbR^{n}_+)$ such that:
\begin{itemize}
\item[(i)] the first marginal of $\eta$ equals $\mu$, i.e. ${\pi_1}_\#\eta=\mu$;
\item[(ii)] $\eta(\mathcal{A}_R)=1$, 
where $\mathcal{A}_R=
\{(x,y)\in \bbR^n\times \bbR^{n}\mid 0\leq y_{i}\leq x_{i}, i=1,\ldots, n\}$.
%$0\leq y_{i}\leq x_{i}\; \eta\text{-a.s. for all $i=1,\ldots, n$}$.
\end{itemize}
We denote the space of reinsurance treaties as $\scrM$ and endow it with the weak topology.\footnote{{\color{black}We drop dependence on $\mu$ to lighten the notation, as the initial loss distribution $\mu$ of the first-line insurer is considered as fixed throughout
the paper}}
\end{definition}}
{\color{black}We will henceforth assume that $\mu$ is absolutely continuous w.r.t. the Lebesgue measure.}

Random treaties as in the definition above should be understood as reinsurance contracts in which the risk carried by the cedent (and the reinsurer) possibly has a degree of randomness external to the risks represented in $X$. Indeed, for any reinsurance treaty for the portfolio $X\in\bbR^{n}_+$, there exists a random vector $R\in\bbR^{n}_+$ representing the part of the risks carried by the reinsurer, while $X-R\in\bbR^{n}_+$ is the retained amount (deductible) that stays with the first-line insurer(s). Then $\eta$ in Definition~\ref{def:ReInTreaty} is the distribution of the random vector $(X,R)$. Observe that, much as with the functions $f_1,\ldots, f_n$ in the previous section, reinsurance treaties also specify how contracts are settled: given a realization of the claims $X=x$, one uses $\eta$ to determine the conditional distribution of $R$ given $X=x$. One then uses this conditional distribution to sample a value for $R$, say $r$, thus obtaining the retained amount $r$ and the deductible $x-r$. The fact that $\eta$ is supported on $\calA_R$ thus guarantees that both $r$ and $x-r$ are non-negative.
{\color{black}The classical (non-random) treaties are clearly a particular case, where $\eta$ has its support on the graph of a function, say $g:\bbR^n\to\bbR^n$, so that, given a realization $X=x$ of the claims, the reinsured part is given by $r=g(x)$.}

\begin{remark}\normalfont
The weak topology appearing in Definition~\ref{def:ReInTreaty} corresponds to the smallest topology making the functionals $\eta\mapsto \int f d\eta$ continuous, where $f$ is any bounded and continuous function on $\bbR^{n}\times\bbR^{n}$.
This definition can be seen as the multi-dimensional generalization of the one in \cite{guerra2012quantile}, but different from the one found in \cite{guerra2021reinsurance}. Compared to the latter, we allow for dependencies between the reinsured amounts and all the risks in the portfolio, which makes $\scrM$ a compact space, instead of relatively compact.  \hfill $\diamond$ 
\end{remark}

{\color{black}With the above definition, we can extend the remaining elements of the optimal reinsurance problem to fit into this framework. Hence, the risk measure\footnote{{\color{black} Notice that we use the term ``risk measure'' in a rather lax manner to simply stand for a function mapping random reinsurance contracts to the real numbers. 
%While we do not make use of any of the properties commonly associated with the concept, we do this to motivate and relate our work to existing literature in the area.
}} is now seen as a functional $\calP:\scrM\to \bbR$ 
and the set of constraints $\calS$ as a subset of $\scrM$. Notice that since $\scrM$ depends on $X$ only through its distribution, $\calP$ and $\calS$ defined in this way are law-invariant.} The optimal reinsurance problem consists then in looking for $\eta^{*}\in \calS$ such that
\begin{equation}\label{eq.prob}
\calP(\eta^{*}) = \min\limits_{\eta\in \calS} \calP(\eta).
\end{equation}
Any $\eta^*$ satisfying \eqref{eq.prob} is called an \emph{optimal reinsurance contract}.

Under rather mild assumptions on the functional $\calP$ and the set of constraints $\calS$, the existence of optimal treaties is guaranteed.
\begin{proposition}\label{prop:existence}
If $\calP$ is lower semi-continuous and $\calS$ is closed, then an optimal treaty $\eta^{*}$ exists.
\end{proposition}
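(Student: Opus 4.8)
The plan is to use the direct method of the calculus of variations, which is exactly what the hypotheses are tailored for. First I would argue that the ambient space $\scrM$ of reinsurance treaties is compact in the weak topology. This is the key structural fact: treaties are probability measures on $\bbR^n_+\times\bbR^n_+$ whose first marginal is the fixed measure $\mu$ and which are concentrated on the set $\calA_R=\{0\le y_i\le x_i\}$. Since $\mu$ is a fixed probability measure, the family $\{\eta\in\scrP(\bbR^n\times\bbR^n):{\pi_1}_\#\eta=\mu\}$ is tight (each treaty inherits tightness in the first coordinate from $\mu$, and in the second coordinate the constraint $0\le y_i\le x_i$ forces the $y$-mass to sit inside the same compacts, up to the diagonal bound), hence relatively compact by Prokhorov's theorem; and both defining conditions (i) and (ii) are preserved under weak limits — the marginal condition because ${\pi_1}$ is continuous, and $\eta(\calA_R)=1$ because $\calA_R$ is closed, so by the portmanteau theorem $\limsup \eta_k(\calA_R^c)=0$ survives to the limit. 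Therefore $\scrM$ is weakly compact.

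The second step is routine once compactness is in hand. Take a minimizing sequence $(\eta_k)\subset\calS$ with $\calP(\eta_k)\to\inf_{\eta\in\calS}\calP(\eta)=:m$. First one should note $m>-\infty$; if $\calP$ is not assumed bounded below one can still run the argument, extracting a convergent subsequence and using lower semi-continuity to conclude $\calP$ of the limit is $\le m$, which forces $m$ finite, but cleanest is simply to invoke l.s.c.\ on the compact set $\scrM$, which already guarantees $\calP$ attains a finite minimum there. By weak compactness of $\scrM$, pass to a subsequence $\eta_{k_j}\to\eta^*$ weakly with $\eta^*\in\scrM$. Since $\calS$ is closed, $\eta^*\in\calS$. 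By lower semi-continuity of $\calP$,
\begin{equation*}
\calP(\eta^*)\le\liminf_{j\to\infty}\calP(\eta_{k_j})=m=\inf_{\eta\in\calS}\calP(\eta),
\end{equation*}
and since $\eta^*\in\calS$ the reverse inequality $\calP(\eta^*)\ge m$ holds trivially, so $\calP(\eta^*)=m$ and $\eta^*$ is an optimal treaty.

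The main obstacle — and really the only place any genuine work is needed — is the compactness claim for $\scrM$, in particular verifying tightness of the second marginal uniformly over all treaties. The remark following Definition~\ref{def:ReInTreaty} already asserts that allowing dependence on the whole portfolio makes $\scrM$ compact rather than merely relatively compact, so I would lean on that: the relative compactness is the Prokhorov argument above, and closedness under weak limits (which is what upgrades "relatively compact" to "compact") is the portmanteau argument for the closed set $\calA_R$ together with continuity of ${\pi_1}$. One subtlety to handle carefully is that $\calS$ closed is meant in the subspace topology on $\scrM$, so a weak limit of points of $\calS$ that a priori lies in $\scrP(\bbR^n\times\bbR^n)$ must first be shown to lie in $\scrM$ (done in step one) before closedness of $\calS$ can be applied; stating this order explicitly avoids a circularity. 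Everything else is the textbook lower-semicontinuity-plus-compactness pattern.
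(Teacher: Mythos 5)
Your proof is correct and takes essentially the same approach as the paper: compactness of $\scrM$ plus closedness of $\calS$ plus lower semi-continuity of $\calP$. The only difference is that the paper invokes the compactness of $\scrM$ by citing Proposition~1 of Guerra--Centeno~\cite{guerra2012quantile} (with a remark on sequential vs.\ topological compactness in a metrizable space), whereas you sketch the Prokhorov/portmanteau argument directly; your phrasing of tightness is slightly loose (it is the constraint $\eta(\calA_R)=1$, not the marginal condition alone, that makes the family tight, as you do go on to say), but the substance is right.
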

\begin{proof}
The proof carries over from the proof of from Proposition 1 in \cite{guerra2012quantile} showing that $\scrM$ is compact (observe that the proof of that proposition shows only that $\scrM$ is sequentially compact; however, since $\scrM$ is metrizable, the two notions agree). Since $\calS$ is closed, it is compact as well, ensuring that $\calP$ attains a minimum.
\end{proof}
Throughout the rest of the paper we will assume that $\calP$ is lower semi-continuous and that $\calS$ is closed.

\section{\color{black}Optimal reinsurance with finitely many constraints}\label{sec3}
As mentioned above, we want to use the idea of linearization to deduce further properties of optimal reinsurance treaties. In order to do this, it is natural to impose smoothness properties on $\calP$. {\color{black}In this section, we develop these ideas
when $\calS$ has the particular form
\begin{equation}\label{eq:shapeofS}
    \calS=\{\eta\in\scrM \mid \calG(\eta)\leq 0\}
\end{equation}
for a lower semi-continuous function $\calG=(g_1,\ldots,g_m):\scrM\to \bbR^{m}$. That is, we allow for $m$ constraints expressed as inequalities $g_i(\eta)\leq 0$, $i=1,\ldots,m$.

We first need to introduce the notion of directional derivative.} 
For a function $f:\Omega_c\to V$ from a convex subset $\Omega_c$ of a vector space $U$ into a normed space $V$, we let $df(u;h)$ denote the directional derivative of $f$ at $u\in \Omega_c$ in the direction of $h\in U$, given by 
\[
df(u;h) = \lim\limits_{t\to 0^{+}}\frac{f(u+th)-f(u)}{t},
\]
whenever the terms on the right are defined and the limit exists. Notice that this derivative is defined as the right-limit at zero,  and by stating that $df(u;h)$ exists we make no assumption about the existence or value of the left limit (thus distinguishing $df$ from the Gateaux derivative). 

\begin{remark}\normalfont Observe that when speaking of the directional derivatives of $\calP$, it is meaningless to try to obtain expressions of the form $d\calP(\eta;\vartheta)$ for $\eta,\vartheta\in \scrM$ since for any $t>0$, $\eta +t\vartheta$ will not be a probability measure, so the expression $\calP(\eta +t\vartheta)$ in the definition of $d\calP(\eta;\vartheta)$ would not make sense. However, by exploiting the convexity of $\scrM$, we observe that for $0<t<1$, $\eta+t(\vartheta-\eta)= (1-t)\eta+t\vartheta\in \scrM$ and it is meaningful to inquire about the existence and properties of $d\calP(\eta;\vartheta-\eta)$, which we will do in the sequel. These directional derivatives have the simple and natural interpretation of the instant change in $\calP$ when, standing in $\eta$, we move in the direction of $\vartheta$. In this context, the use of directional derivatives resembles the notion of Gateaux-differentiability employed by Deprez and Gerber in \cite{deprez1985convex} for solving problems within optimal reinsurance and optimal cooperation. In their context, there is a risk functional $\widehat{\calP}$ acting on random variables (instead of their distributions) and one obtains the derivatives by limits of the form $(\widehat{\calP}(Y+t(Z-Y))-\widehat{\calP}(Y))/t$. This is, however, different from our approach, because when $Y$ and $Z$ are distributed according to $\eta$ and $\vartheta$ respectively, $(1-t)Y+tZ$ will in general not be distributed according to $(1-t)\eta+t\vartheta$. \hfill $\diamond$ 
\end{remark}
Before proceeding, we would like to emphasize that the following approach in particular applies to constraints involving the Value-at-Risk (which, compared to other common risk measures, exhibits poor mathematical properties), for which other approaches often fail. 
\begin{proposition}\label{prop:kkt}
Let $\calS$ be given by the set $\{\eta\in\scrM \mid \calG(\eta)\leq 0\}$ for a function $\calG:\scrM\to \bbR^{m}$, and $\etastar\in \calS$ be an optimal reinsurance contract, i.e. satisfying \eqref{eq.prob}. Moreover, set
\[
\calD = \{\eta\in\scrM \mid d\calP(\etastar;\eta-\etastar) \text{ and } d\calG(\etastar;\eta-\etastar) \text{ exist}\}.
\]
Suppose there exists a subset $\calC$ of $\calD$ satisfying: $\etastar\in \calC$,
$\calC$ is convex, and if $\eta_{1},\eta_{2}\in \calC$ then
\begin{equation}\label{eq:convex_gateaux}
d\calP(\etastar;(1-t)\eta_{1}+t\eta_{2}-\etastar) = (1-t)d\calP(\etastar;\eta_{1}-\etastar)+td\calP(\etastar;\eta_{2}-\etastar)\quad \text{for $0\leq t\leq 1$},
\end{equation}
and similarly for $d\calG$. Then, there exist $r^{\ast}\in\bbR_+$ and $\lambda^{\ast}\in \bbR^{m}_+$ such that $\lambda^{\ast}\cdot \calG(\etastar)=0$ and
\begin{equation}\label{eq:positive_lagrange}
r^{\ast}d\calP(\etastar;\eta-\etastar)+\lambda^{\ast}\cdot d\calG(\etastar;\eta-\etastar) \geq 0\quad \text{for every $\eta\in \calC$}.
\end{equation}
If $\calG$ is constant on $\calC$ or there exists $\eta\in \calC$ such that $\calG(\etastar)+d\calG(\etastar;\eta-\etastar)<0$, then $r^{\ast}$ in \eqref{eq:positive_lagrange} is positive.
\end{proposition}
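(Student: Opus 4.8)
The plan is to reduce the statement to a finite-dimensional separation/duality argument by pushing the whole problem through the directional-derivative linearization, much as in the classical Karush--Kuhn--Tucker setup. First I would introduce the map $L:\calC\to\bbR^{1+m}$ defined by $L(\eta)=\bigl(d\calP(\etastar;\eta-\etastar),\,d\calG(\etastar;\eta-\etastar)\bigr)$. Because $\etastar$ is a minimizer of $\calP$ over $\calS$, and because $\calC\subseteq\calD$ so the relevant derivatives exist, I would argue that no $\eta\in\calC$ can simultaneously satisfy $d\calP(\etastar;\eta-\etastar)<0$ and $\calG(\etastar)+d\calG(\etastar;\eta-\etastar)<0$: if it did, then moving along the segment $(1-t)\etastar+t\eta\in\scrM$ for small $t>0$ would, by the definition of the directional derivative (a right limit at $0$), strictly decrease $\calP$ while keeping the point in $\calS$ (here I would use that $g_i(\etastar)\le 0$, with strict decrease in the derivative handling the active constraints and continuity handling the inactive ones), contradicting optimality of $\etastar$. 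This is the geometric heart of the argument; the rest is convex-analytic bookkeeping.

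Next I would convexify. Using $\etastar\in\calC$, the convexity of $\calC$, and the convex-linearity hypothesis \eqref{eq:convex_gateaux} for both $d\calP$ and $d\calG$, I would show that the set
\[
K=\bigl\{\,(a,b)\in\bbR\times\bbR^{m}\ \bigm|\ a\ge d\calP(\etastar;\eta-\etastar),\ b\ge \calG(\etastar)+d\calG(\etastar;\eta-\etastar)\ \text{for some }\eta\in\calC\,\bigr\}
\]
is a convex subset of $\bbR^{1+m}$ (convex-linearity along segments in $\calC$ gives exactly the affine interpolation needed, and taking $t=0$ with $\eta=\etastar$ shows $(0,\calG(\etastar))\in K$, while $\calG(\etastar)\le 0$). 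By the previous paragraph, $K$ is disjoint from the open negative orthant $(-\infty,0)\times(-\infty,0)^{m}$, which is a nonempty open convex set. Then I would apply the separating hyperplane theorem in $\bbR^{1+m}$ to obtain a nonzero $(r^{\ast},\lambda^{\ast})$ with $r^{\ast}a+\lambda^{\ast}\cdot b\ge 0$ on $K$ and $\le 0$ on the negative orthant. The second inequality forces $r^{\ast}\ge 0$ and $\lambda^{\ast}\ge 0$ (each coordinate functional must be nonnegative, else one could send a single coordinate to $-\infty$). Evaluating the first inequality at the point $(d\calP(\etastar;\eta-\etastar),\,\calG(\etastar)+d\calG(\etastar;\eta-\etastar))\in K$ for arbitrary $\eta\in\calC$ gives
\[
r^{\ast}d\calP(\etastar;\eta-\etastar)+\lambda^{\ast}\cdot d\calG(\etastar;\eta-\etastar)\ \ge\ -\,\lambda^{\ast}\cdot\calG(\etastar)\ \ge\ 0,
\]
using $\lambda^{\ast}\ge 0$ and $\calG(\etastar)\le 0$. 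Taking $\eta=\etastar$ (so both derivatives vanish) yields $-\lambda^{\ast}\cdot\calG(\etastar)\le 0$, hence combined with $\lambda^{\ast}\cdot\calG(\etastar)\le 0$ we get the complementary slackness $\lambda^{\ast}\cdot\calG(\etastar)=0$, and then the displayed inequality becomes exactly \eqref{eq:positive_lagrange}.

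Finally, for the positivity of $r^{\ast}$: suppose $r^{\ast}=0$. Then $(r^{\ast},\lambda^{\ast})$ nonzero forces $\lambda^{\ast}\ne 0$, and \eqref{eq:positive_lagrange} reads $\lambda^{\ast}\cdot d\calG(\etastar;\eta-\etastar)\ge 0$ for all $\eta\in\calC$, i.e.\ $\lambda^{\ast}\cdot\bigl(\calG(\etastar)+d\calG(\etastar;\eta-\etastar)\bigr)\ge \lambda^{\ast}\cdot\calG(\etastar)=0$. Under the Slater-type condition — some $\eta\in\calC$ with $\calG(\etastar)+d\calG(\etastar;\eta-\etastar)<0$ — this gives $\lambda^{\ast}\cdot(\text{strictly negative vector})\ge 0$ with $\lambda^{\ast}\ge 0$, $\lambda^{\ast}\ne 0$, a contradiction. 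If instead $\calG$ is constant on $\calC$, then $d\calG(\etastar;\eta-\etastar)=0$ for all $\eta\in\calC$, so $\lambda^{\ast}$ plays no role in \eqref{eq:positive_lagrange}; the $r^{\ast}=0$ case would make \eqref{eq:positive_lagrange} vacuous, and one re-examines the separation: with the $b$-coordinate frozen one separates the convex set $\{a:a\ge d\calP(\etastar;\eta-\etastar)\}\subseteq\bbR$ from $(-\infty,0)$, forcing the $a$-coefficient to be positive (it cannot be zero since a single point in $\bbR$ is separated from an open half-line only by a nonzero functional). I would state this last bifurcation carefully.

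The step I expect to be the main obstacle is the first one: rigorously converting "$d\calP(\etastar;\eta-\etastar)<0$ and the linearized constraints are strictly negative" into an actual feasible descent direction. Because the directional derivative is only a one-sided limit and $\calG$ is merely lower semicontinuous, I must be careful that for the inactive constraints ($g_i(\etastar)<0$) lower semicontinuity alone does not keep $g_i$ negative along the segment — so I would either additionally invoke that the relevant $g_i$ restricted to the segment are continuous (which follows from the existence of the directional derivative along that segment, giving at least one-sided continuity from the right), or absorb this into the hypothesis that $\etastar\in\calD$ and $\calC\subseteq\calD$. Getting the interplay between "derivative exists" and "value stays feasible for small $t$" exactly right, without over-claiming regularity that the paper's hypotheses do not grant, is the delicate point.
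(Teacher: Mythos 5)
Your proposal follows essentially the same route as the paper's proof: you define the same convex set (your $K$ is the paper's $A$), establish disjointness from the open negative orthant by the feasible-descent argument using the right-limit structure of the directional derivative, apply the separating hyperplane theorem, and read off the multipliers and complementary slackness exactly as the paper does. The one place you diverge slightly is the positivity of $r^{\ast}$ when $\calG$ is constant on $\calC$: the paper observes $d\calP(\etastar;\eta-\etastar)\geq 0$ directly from optimality (since $\calC\subset\calS$ makes $(1-t)\etastar+t\eta$ feasible and $\etastar$ minimal) and then just takes $r^{\ast}=1$, which is cleaner than your proposal to re-run the separation with the $b$-coordinate frozen — a step that in any case secretly needs the same observation $d\calP(\etastar;\eta-\etastar)\geq 0$ to establish the required disjointness in $\bbR$ before one can invoke separation there.
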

\begin{remark}\normalfont\label{remthis}
Before showing a proof for the proposition, we would like to provide an intuitive explanation of its meaning. Recall the standard optimization procedure for differentiable functions on $\bbR^{N}$ with smooth constraints, where one forms the Lagrangian and solves for its gradient while finding the multipliers that make the solutions satisfy the constraints. Proposition~\ref{prop:kkt} generalizes this procedure and shows that, in the current scenario, one can still do something similar (observe that the left-hand side of \eqref{eq:positive_lagrange} would correspond to the derivative of the Lagrangian of the associated functionals). However, due to the looseness of our assumptions, the conclusion is much weaker. Indeed, \eqref{eq:positive_lagrange} is only an inequality instead of an equality. This is due to the fact that $\calP$ and $\calG$ are, in principle, defined on $\scrM$ only rather than on the much larger space $\scrS$  of finite signed measures. Next, observe that we have to resort to the use of directional derivatives, as opposed to a stronger concept of differentiability. Using directional derivatives we cannot ensure linearity in the second argument of $d\calP$ or $d\calG$, so that we are forced to operate on a smaller subset of $\scrM$. It is here that the set $\calC$ plays a relevant role. One can think of this set as a ``large enough set in which both $\calP$ and $\calG$ are smooth'' (see also Remark~\ref{rem:size_C} on how ``large enough'' could be understood). Since, for arbitrary functionals, $\calC$ will be a strict subset of $\scrM$, the information provided by \eqref{eq:positive_lagrange} will not hold for all possible reinsurance contracts. Finally, notice that, unlike the usual Lagrangian in finite dimensions, there is a factor $r^{\ast}$ multiplying $\calP$. The appearance of this factor is related to the regularity constraint $\calG(\etastar)+d\calG(\etastar;\mu-\etastar)<0$, which essentially eliminates the possibility of a wedge at boundary points. {\color{black}In particular, this also prevents us from imposing \textit{hard} equality constraints, which can be obtained with inequalities by replacing $\calG$ with $(\calG, -\calG)$.} 
Despite these shortcomings, we will see that Proposition~\ref{prop:kkt} is still strong enough to derive important properties of $\etastar$. \hfill $\diamond$ 
\end{remark}
\begin{proof}[Proof of Proposition~\ref{prop:kkt}]
Define the sets 
\begin{align*}
A &= \{(r,\lambda) \in \bbR\times \bbR^{m}\mid r\geq d\calP(\etastar;\eta-\etastar), \lambda\geq \calG(\etastar)+d\calG(\etastar;\eta-\etastar) \text{ for some } \eta\in \calC\},\\
B &= \{(r,\lambda) \in \bbR\times \bbR^{m}\mid r< 0, \lambda< 0\}.
\end{align*}
Observe that $A$ is non-empty since $\calC$ is non-empty, and convex thanks to  \eqref{eq:convex_gateaux}. On the other hand, $B$ is clearly convex, non-empty and open. We claim that $A$ and $B$ are disjoint. Arguing by contradiction, suppose there is some $r<0$, $\lambda<0$ and $\eta \in \calC$ such that 
\[
d\calP(\etastar;\eta-\etastar) \leq r\quad \text{and}\quad \quad \calG(\etastar)+d\calG(\etastar;\eta-\etastar)\leq \lambda.
\]
Notice that the first inequality implies $\eta\neq\etastar$. Since
\[
\lim\limits_{t\to 0^{+}} \calP(\etastar + t(\eta-\etastar)) - \calP(\etastar)- td\calP(\etastar;\eta-\etastar) = 0
\]
and 
\[
\lim\limits_{t\to 0^{+}} \calG(\etastar + t(\eta-\etastar)) - \calG(\etastar)- td\calG(\etastar;\eta-\etastar) = 0,
\]
it follows that there exists $0<s<1$ such that 
\[
\calG(\etastar + s(\eta-\etastar))< 0 \text{ and } \calP(\etastar + s(\eta-\etastar)) - \calP(\etastar)<0,
\]
so that $\etastar + s(\eta-\etastar)\in \calS$ and $\calP(\etastar + s(\eta-\etastar)) < \calP(\etastar)$, contradicting the optimality of $\etastar$. Therefore, $A\cap B=\varnothing$. From the separation theorem for convex sets (see e.g. Theorem 3.4 in \cite{Rudin1991}), there exists a (non-zero) continuous linear functional $\Lambda^{\ast}$ on $\bbR\times\bbR^{m}$ and $\gamma\in \bbR$ such that 
\[
\Lambda^{\ast}(r_{1},\lambda_{1}) <\gamma \leq \Lambda^{\ast}(r_{2},\lambda_{2}), \quad (r_{1},\lambda_{1})\in B,(r_{2},\lambda_{2}) \in A
\]
and $\Lambda^{\ast}(r,\lambda)=r^{\ast}r+\lambda^{\ast}\cdot\lambda$ for some $r^{\ast}\in\bbR$ and $\lambda^{\ast}\in\bbR^{m}$. Since $(0,0)\in A\cap \overline{B}$, $\gamma=0$. Thus $r^{\ast}r+\lambda^{\ast}\cdot\lambda<0$ for every $r<0$ and $\lambda<0$, which can hold if and only if $r^{\ast}\geq 0$ and $\lambda^{\ast}\geq 0$. 

Now, for every $\eta\in \calC$, $(d\calP(\etastar;\eta-\etastar), \calG(\etastar)+d\calG(\etastar;\eta-\etastar))\in A$, and therefore
\begin{equation}\label{prop:kkt:eq1}
r^{\ast}d\calP(\etastar;\eta-\etastar)+\lambda^{\ast}\cdot ( \calG(\etastar)+d\calG(\etastar;\eta-\etastar)) \geq 0, \quad \eta\in \calC.
\end{equation}
In particular, by choosing $\eta=\etastar$, we obtain $\lambda^{\ast}\cdot \calG(\etastar)\geq 0$. However, the inequalities $\calG(\etastar)\leq 0$ and $\lambda^{\ast}\geq 0$ together imply $\lambda^{\ast}\cdot \calG(\etastar)\leq 0$, so that necessarily $\lambda^{\ast}\cdot \calG(\etastar)= 0$. Hence, \eqref{prop:kkt:eq1} becomes 
equation \eqref{eq:positive_lagrange}. If there exists $\eta\in \calC$ such that $\calG(\etastar)+d\calG(\etastar;\eta-\etastar)<0$, we cannot have $r^{\ast}=0$, for otherwise we would have $\lambda^{\ast}\neq 0$ and $\lambda^{\ast}\cdot d\calG(\etastar;\eta-\etastar)<0$, contradicting \eqref{eq:positive_lagrange}. In this situation we can therefore replace $\Lambda^{\ast}$ by $\Lambda^{\ast}/r^{\ast}$, and proceed in the same manner. Finally, we observe that this argument does not hold if $\calG$ is constant on $\calC$. However, in this scenario, $d\calG(\etastar;\eta-\etastar)=0$ for every $\eta\in \calC$ and\[
d\calP(\etastar;\eta-\etastar) = \lim\limits_{t\to 0^{+}}\frac{\calP((1-t)\etastar+t\eta)-\calP(\etastar)}{t}\geq 0
\]
for every $\eta\in \calC$, by convexity of $\calC$. Thus, \eqref{eq:positive_lagrange} holds
for all $\lambda^{\ast}\geq 0$.
\end{proof}
% In several situations, $\calP$ and $\calG$ are regular enough so that one can naturally extend them to a larger subspace of $\scrS$, making the results stronger. For example, the functionals of Examples~\ref{ex:ex1} and \ref{ex:ex2} below can be naturally extended to the subspace  $U\subset \scrS$ given by
% \[
% U=\left\{\eta\in \scrS\mid \int_{\bbR^{2+}} x^{2}|\eta|(dx)<\infty\right\},
% \]
% where $|\eta|$ denotes the total variation of $\eta$. In this set, the Gateaux derivatives of $\calP$ and $\calG$ are defined everywhere and are linear. Since Proposition~\ref{prop:kkt} is formulated in terms of the directional derivatives, it is not necessary to extend $\calP$ and $\calG$ to a larger space, but we can focus instead on an extension of their derivatives. This is relevant when, e.g., $\calP$ is related to the Value-at-Risk, since in this case we do not have to specify an extension of $\calP$ to signed measures, but instead we can focus on extensions of $d\calP$. This idea is used in the following result.

{\color{black}In several situations, $\calP$ and $\calG$ are regular enough so that their Gateaux derivatives satisfy assumptions stronger than convex linearity. For example, since the functionals are defined on a space of measures, these derivatives are often integration against a function, i.e., they are integral operators. This is still true for, e.g., the Value-at-Risk, as long as we restrict the functional to nicely behaved subsets like $\calC$ above. The following proposition makes use of this idea and specializes the results from Proposition~\ref{prop:kkt} to the scenario where $d\calP$ and $d\calG$ are effectively integral operators. }
% \begin{proposition}\label{prop:support_minimizer}
% In the setup of Proposition \ref{prop:kkt}, assume further that there exists a subspace $U\subset \scrS$ and linear mappings $\widehat{\calP}:U\to \bbR$ and $\widehat{G}:U\to \bbR^{m}$ such that:
% \begin{itemize}
% \item[(i)] $U$ contains the point masses of points in $\calA_{R}$ and $\scrM\subset U$,
% \item[(ii)] $d\calP(\etastar;\cdot)$ and $d\calG(\etastar;\cdot)$ are the restrictions to $\calC-\etastar$ of $\widehat{\calP}$ and $\widehat{G}$, respectively, and
% \item[(iii)] $\widehat{\calP}$ and $\widehat{G}$ are integral operators with continuous kernels, i.e., there exist continuous functions $p_{\etastar}:\bbR^{n}_{+}\times\bbR^{n}_{+}\to\bbR$ and $g_{\etastar}:\bbR^{n}_{+}\times\bbR^{n}_{+}\to\bbR^{m}$ such that
% \[
% \widehat{\calP}(\eta) = \int_{\bbR^{n}_{+}\times\bbR^{n}_{+}}p_{\etastar}(x,y)\eta(dx,dy)\quad \text{and}\quad \widehat{G}(\eta) = \int_{\bbR^{n}_{+}\times\bbR^{n}_{+}}g_{\etastar}(x,y)\eta(dx,dy).
% \]
% \end{itemize}
% Then, for every $(x,y)\in \mathrm{Supp}(\etastar)$, there exists a closed set $I\subset [0,x]$ with $y\in I$ and
% \begin{equation}\label{eq:support_minimum}
% r^{\ast}p_{\etastar}(x,y)+\lambda^{\ast}\cdot g_{\etastar}(x,y) = \min\limits_{t\in I} r^{\ast}p_{\etastar}(x,t)+\lambda^{\ast}\cdot g_{\etastar}(x,t).
% \end{equation}
% \end{proposition}

\begin{proposition}\label{prop:support_minimizer}
In the setup of Proposition~\ref{prop:kkt}, {\color{black}assume further that $d\calP$ and $d\calG$ are integral operators on $\calC-\eta^*$ with continuous kernels, i.e., there exist continuous functions $p_{\etastar}:\bbR^{n}_{+}\times\bbR^{n}_{+}\to\bbR$ and $g_{\etastar}:\bbR^{n}_{+}\times\bbR^{n}_{+}\to\bbR^{m}$ such that
\[
d\calP(\etastar;\eta) = \int_{\bbR^{n}_{+}\times\bbR^{n}_{+}}p_{\etastar}(x,y)\eta(dx,dy)\quad \text{and}\quad d\calG(\etastar;\eta) = \int_{\bbR^{n}_{+}\times\bbR^{n}_{+}}g_{\etastar}(x,y)\eta(dx,dy)
\]
for $\eta\in \calC-\etastar$. Assume further that $p_{\etastar}$ and $g_{\etastar}$ 
are integrable w.r.t. elements in $\scrM-\etastar$.} Then, for every $(x,y)\in \mathrm{Supp}(\etastar)$, there exists a closed set $I\subset [0,x]$ with $y\in I$ and
\begin{equation}\label{eq:support_minimum}
r^{\ast}p_{\etastar}(x,y)+\lambda^{\ast}\cdot g_{\etastar}(x,y) = \min\limits_{t\in I} r^{\ast}p_{\etastar}(x,t)+\lambda^{\ast}\cdot g_{\etastar}(x,t).
\end{equation}
\end{proposition}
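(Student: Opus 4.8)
The plan is to distill Proposition~\ref{prop:kkt} into the single statement that $\etastar$ minimizes a \emph{linear} functional over $\calC$, and then to run an optimal-transport style mass-rerouting argument localized on $\mathrm{Supp}(\etastar)$. First I would substitute the two integral representations into \eqref{eq:positive_lagrange}: writing $\phi(x,y):=r^{\ast}p_{\etastar}(x,y)+\lambda^{\ast}\cdot g_{\etastar}(x,y)$, which is continuous by assumption and integrable against every element of $\scrM-\etastar$, inequality \eqref{eq:positive_lagrange} becomes
\[
\int\phi\,d\eta\;\geq\;\int\phi\,d\etastar\qquad\text{for every }\eta\in\calC ,
\]
so the only property of $\etastar$ that will be used is that it minimizes $\eta\mapsto\int\phi\,d\eta$ over $\calC$.

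Next I would fix a point of the support, written $(\bar x,\bar y)\in\mathrm{Supp}(\etastar)$ to avoid clashing with the integration variables. For $t\in[0,\bar x]$ set $\tau_{t}(x,y):=t\wedge x$ (coordinatewise minimum), which is continuous, satisfies $\tau_{t}(x,y)\in[0,x]$, and has $\tau_{t}(\bar x,\bar y)=t$; let $T_{t}(x,y):=(x,\tau_{t}(x,y))$. For a neighbourhood $W$ of $(\bar x,\bar y)$ define the competitor $\eta_{W,t}:=\etastar-\etastar|_{W}+(T_{t})_{\#}(\etastar|_{W})$; since $T_{t}$ fixes the first coordinate and maps into $\calA_{R}$, one checks that $\eta_{W,t}\in\scrM$. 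I would then put
\[
J:=\bigl\{\,t\in[0,\bar x]\ :\ \eta_{W,t}\in\calC\ \text{for every sufficiently small }W\ni(\bar x,\bar y)\,\bigr\},\qquad I:=\{\bar y\}\cup\overline{J},
\]
so that $I$ is a closed subset of $[0,\bar x]$ containing $\bar y$; it remains to prove $\phi(\bar x,\bar y)\leq\phi(\bar x,t)$ for $t\in J$, which then extends to $\overline J$ by continuity of $\phi(\bar x,\cdot)$ and makes the minimum in \eqref{eq:support_minimum} over $I$ be attained at $\bar y$, giving the claim.

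That inequality I would prove by contradiction. If $\phi(\bar x,t_{0})<\phi(\bar x,\bar y)$ for some $t_{0}\in J$, then by continuity of $\phi$ and of $\tau_{t_{0}}$, together with $\tau_{t_{0}}(\bar x,\bar y)=t_{0}$, there exist $c>0$ and a neighbourhood $W$ of $(\bar x,\bar y)$ --- which I may take small enough, in addition, that $\eta_{W,t_{0}}\in\calC$ --- on which $\phi(x,\tau_{t_{0}}(x,y))\leq\phi(x,y)-c$. Since $(\bar x,\bar y)\in\mathrm{Supp}(\etastar)$ we have $\alpha:=\etastar(W)>0$, and (all integrals being finite by the integrability hypothesis)
\[
\int\phi\,d\eta_{W,t_{0}}-\int\phi\,d\etastar=\int_{W}\bigl(\phi(x,\tau_{t_{0}}(x,y))-\phi(x,y)\bigr)\,\etastar(dx,dy)\leq-c\,\alpha<0 ,
\]
which, since $\eta_{W,t_{0}}\in\calC$, contradicts the minimality of $\etastar$ over $\calC$ established in the first paragraph.

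I expect the main obstacle to be the interaction with $\calC$: because both \eqref{eq:positive_lagrange} and the integral representations are only available on $\calC$, the rerouted competitor $\eta_{W,t_{0}}$ must be shown to lie in $\calC$, and it is precisely this requirement that the reachable set $J$ (hence $I$) is built to encode --- and that governs how informative \eqref{eq:support_minimum} is in a given problem (when $\calC$ is rich enough, cf.\ Remarks~\ref{remthis} and~\ref{rem:size_C}, one may take $I=[0,\bar x]$, recovering the clean statement that $\mathrm{Supp}(\etastar)$ sits on the conditional $\argmin$ of $r^{\ast}p_{\etastar}+\lambda^{\ast}\cdot g_{\etastar}$). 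Two more minor technical points are the continuity estimate producing a \emph{uniform} strict improvement on an entire neighbourhood $W$ (unavoidable, since $\mu$, hence $\etastar$, is non-atomic, so one cannot perturb at a single value of $x$), and arranging the transport $\tau_{t}$ so that the moved mass never leaves $\calA_{R}$, even when $t$ lies on the boundary of $[0,\bar x]$, for which the truncation $t\wedge x$ is tailor-made.
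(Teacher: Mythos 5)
Your proposal is correct and is essentially the paper's own argument: both proofs perturb $\etastar$ by rerouting the mass in a small neighbourhood of a support point to a different second coordinate, use the fact that such a perturbation stays in $\calC$ (encoded in the reachability set $I$, resp.\ $J$) to invoke \eqref{eq:positive_lagrange}, and then read off the pointwise inequality $r^{\ast}p_{\etastar}(x,y)+\lambda^{\ast}\cdot g_{\etastar}(x,y)\leq r^{\ast}p_{\etastar}(x,t)+\lambda^{\ast}\cdot g_{\etastar}(x,t)$ for reachable $t$. The two execution differences are in your favour: pushing forward $\etastar|_{W}$ by the truncation $(x,y)\mapsto(x,t\wedge x)$ keeps the competitor in $\calA_{R}$ automatically, so you avoid the paper's separate treatment of boundary points (its Step 4), and your contradiction via a uniform continuity estimate on $W$ replaces the paper's normalization and Lebesgue-differentiation limit $\varepsilon\to 0$.
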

Here, for $x,y\in \bbR^{n}$, $[x,y]$ represents the closed box $[x_{1},y_{1}]\times\cdots\times [x_{n},y_{n}]$. Similarly, we denote by $]x,y[$ the product of the open intervals $]x_{1},y_{1}[\times\cdots\times ]x_{n},y_{n}[$.

{\color{black}Proposition~\ref{prop:support_minimizer} simply translates the information contained in \eqref{eq:positive_lagrange} into \eqref{eq:support_minimum}. The advantage of this is that \eqref{eq:positive_lagrange} is phrased in terms of \textit{measures} while \eqref{eq:support_minimum} is given in terms of \textit{points} in $\bbR^{n}\times\bbR^{n}$, which are usually easier to handle. This translation is done by describing the support of optimal contracts in terms of the functions $p_{\etastar}$ and $g_{\etastar}$. Notice that a more direct way of going from measures to points would be by evaluating \eqref{eq:positive_lagrange} at the Dirac measures $\delta_{(x,y)}$, however, by absolute continuity of $\mu$, $\delta_{(x,y)}\not\in\scrM$ 
for every $(x,y)\in\calA$, so another approach has to be used.}
\begin{proof}
Let $(x,y)$ be an arbitrary point in $\calA_{R}$. For $t\in\bbR^{n}$ and $\varepsilon>0$,
define the measure $\eta_{x,y,t,\varepsilon}$ by
\[
\eta_{x,y,t,\varepsilon}(A) = \etastar(A)-\etastar(A\cap B_{\varepsilon}(x,y))+\etastar((A-(0,t))\cap B_{\varepsilon}(x,y)),\quad \text{for $A\in \bbR^{n}_{+}\times \bbR^{n}_{+}$},
\]
where $A-(0,t)$ denotes the translation of $A$ by $-(0,t)\in \bbR^{n}_{+}\times \bbR^{n}_{+}$, and $B_{\varepsilon}(x,y)$ is the open ball around $(x,y)$ of radius $\varepsilon$. The proof will be divided into four steps.\\[0.2cm]
\textit{Step 1. Let $(x,y)\in\calA_R$ be such that $0<y<x$. Define $\delta$ by
\[
\delta=\min\left\{d((x,y),\partial \calA_{R}),d((x,y),\partial \calA_{R}-(0,t))\right\}.
\]
Then $\delta>0$ and $\eta_{x,y,t,\varepsilon}\in \scrM$ for every $-y<t<x-y$ and $0\leq \varepsilon<\delta$.
}\\[0.2cm]
Here, $d((x,y),B)$ denotes the distance of $(x,y)$ from the set $B$. The idea of this choice is that $\delta$ is the largest number for which the last two terms in the definition of $\eta_{x,y,t,\varepsilon}(\calA_R)$ cancel out, thus producing an element of $\scrM$. Now, strict positivity of $\delta$ follows from the choice of $y$ and $t$: since $0<y<x$, $(x,y)\not\in \partial \calA_{R}$, and since $(x,y)\in \calA_{R}-(0,t)$ if and only if $(x,y+t)\in \calA_{R}$, it follows that, for $-y<t<x-y$, $(x,y+t)\not\in \partial \calA_{R}$, so $(x,y)\not\in \partial \calA_{R}-(0,t)$. As both $\partial \calA_{R}$ and $\partial \calA_{R} -(0,t)$ are closed, we obtain $\delta>0$. Next, we need to check that $\eta_{x,y,t,\varepsilon}\in \scrM$. Since $\eta_{x,y,t,\varepsilon}$ is a (positive) measure in $\bbR^{n}_{+}\times \bbR^{n}_{+}$, we just need to show that it is a probability measure giving full measure to $\calA_{R}$ with ${\pi_1}_\#\eta_{x,y,t,\varepsilon}=\mu$. We have:
\begin{itemize}
    \item $\eta_{x,y,t,\varepsilon}(\calA_{R})=1$ and $\eta_{x,y,t,\varepsilon}(\calA_{R}^{c})=0$, since $B_{\varepsilon}(x,y)\subset \calA_{R} \cap (\calA_{R} - (0,t))$ for $0\leq \varepsilon<\delta$;
    \item For measurable $A\subset \bbR^{n}_{+}$, $A\times \bbR^{n} - (0,t) = A\times \bbR^{n}$, so
    \[
        \eta_{x,y,t,\varepsilon}(A\times \bbR^{n}) = \etastar(A\times \bbR^{n}) = \mu(A).
    \]
\end{itemize}
We conclude that $\eta_{x,y,t,\varepsilon}\in \scrM$.\\[0.2cm]
\textit{Step 2. If $(x,y)\in \mathrm{Supp}(\etastar)$ is such that $0<y<x$, then, for every $-y<t<x-y$ and continuous function $f:\bbR^{n}_{+}\times \bbR^{n}_{+}\to \bbR^{n}$, we have
\begin{equation}\label{eq:convergence_weak}
\lim\limits_{\varepsilon\to 0} \int f d\eta_\varepsilon = f(x,y+t)-f(x,y),
\end{equation}
where 
\[
\eta_\varepsilon = \frac{\eta_{x,y,t,\varepsilon}-\etastar}{\etastar(B_{\varepsilon}(x,y))}.
\]
}\\[0.05cm]
Let $\vartheta_\varepsilon,\pi_\varepsilon\in \scrP(\bbR^{n}_{+}\times \bbR^{n}_{+})$ be given by
\[
    \vartheta_\varepsilon(A) = \frac{\etastar(A\cap B_{\varepsilon}(x,y))}{\etastar(B_{\varepsilon}(x,y))}
\]
and
\[
    \pi_\varepsilon(A) = \frac{\etastar((A-(0,t))\cap B_{\varepsilon}(x,y))}{\etastar(B_{\varepsilon}(x,y))},
\]
so that $\eta_\varepsilon=\pi_\varepsilon-\vartheta_\varepsilon$. To prove \eqref{eq:convergence_weak} it is then enough to show that $\int f d\vartheta_\varepsilon \to f(x,y)$ and $\int f d\pi_\varepsilon \to f(x,y+t)$. For this, note that
\[
\left|\int f d\vartheta_\varepsilon -f(x,y)\right| = \left|\frac{1}{\etastar(B_{\varepsilon}(x,y))} \int_{B_{\varepsilon}(x,y)} f d\etastar - {\color{black} f(x,y)}\right|\to 0\qquad \text{as $\varepsilon\to 0$}
\]
by the Lebesgue differentiation theorem. The other convergence is proved similarly.\\[0.2cm]
\textit{Step 3. For $(x,y)\in \mathrm{Supp}(\etastar)$ with $0<y<x$, the statement of the proposition holds by taking 
\[
I = \overline{\{t\in ]0,x[\ \mid 0\text{ is a limit point of } \{\varepsilon\in [0,\delta) \mid \eta_{x,y,t,\varepsilon}\in \calC\}\}}.
\]
}\\[0.05cm]
Define
\[
E(t) = \{\varepsilon\in [0,\delta) \mid \eta_{x,y,t,\varepsilon}\in \calC\}
\]
and
\[
J=\{t\in ]0,x[\ \mid 0\text{ is a limit point of } E(t-y)\}.
\]
By assumption, we know that $\etastar\in \calC$, so $E(0) =[0,\delta)$, implying $y\in J$. 
From \eqref{eq:positive_lagrange} and linearity, we obtain
\[
\int_{\bbR^{n}_{+}\times\bbR^{n}_{+}}r^{\ast}p_{\etastar}(x,t)+\lambda^{\ast}\cdot g_{\etastar}(x,t) \; d\eta_\varepsilon \geq 0
\]
for $t\in J$ and $\varepsilon\in E(t-y)$. Letting $\varepsilon\to 0$ in the previous inequality, by Step 2 we obtain
\[
r^{\ast}(p_{\etastar}(x,t)-p_{\etastar}(x,y))+\lambda^{\ast}\cdot (g_{\etastar}(x,t)-g_{\etastar}(x,y)) \geq 0, \quad t\in J(x,y).
\]
Using continuity once again, we see that this inequality is valid for every $t$ in the closure of $J$, so we can take $I=\overline{J}$, obtaining \eqref{eq:support_minimum}.\\[0.2cm]
\textit{Step 4. The statement of the proposition holds for arbitrary $(x,y)\in \mathrm{Supp}(\etastar)$.
}\\[0.2cm]
Observe that it only remains to show that the statement is true for $(x,y)\in \partial\calA_R$. Step~1 still holds by setting $\delta=d((x_0,y_0),\partial \calA_{R}-(0,t))$ for $-y<t<x-y$. Notice that now we only have $B_{\varepsilon}(x,y)\cap \subset \calA_{R} \cap (\calA_{R} - (0,t))$. However, this is enough to show that $\eta_{x,y,t,\varepsilon}(\calA_{R})=1$ and $\eta_{x,y,t,\varepsilon}(\calA_{R}^{c})=0$ by noticing that 
\[
\etastar(A) = \etastar(A\cap \calA_{R})
\]
for every measurable $A\subset \bbR^{n}_{+}$, so that we anyway have  $\eta_{x,y,t,\varepsilon}\in \scrM$. Steps 2 and 3 carry over verbatim to this case.
\end{proof}
\begin{remark}\normalfont The measures $\eta_{x,y,t,\varepsilon}$ in the previous proof were already used in \cite{guerra2012quantile} and again in \cite{guerra2021reinsurance} with the same objective of identifying the support of optimal reinsurance contracts. However, since the functionals in these papers were more specific, one was able to directly obtain a description of the sets $I$ there in a more direct manner. \hfill $\diamond$ 
\end{remark}
\begin{remark}\normalfont
Note that the conclusion of Proposition~\ref{prop:support_minimizer} is somewhat redundant for our purposes: Equation \eqref{eq:support_minimum} assumes knowledge of points in the support of $\etastar$, which is the contract that we wish to determine. Nonetheless, the procedure can be reversed: if by some mechanism we can identify the sets $I$, then we will know that points in the support of $\etastar$ will be the minima of $r^{\ast}p_{\etastar}(x,\cdot)+\lambda^{\ast}\cdot g_{\etastar}(x,\cdot)$.  In most situations, $p_{\etastar}$ and $g_{\etastar}$ depend on $\etastar$ only through a finite set of parameters, so we can compute these minima without full specification of $\etastar$, and in common applications, there will be only finitely many minima. Hence, the task of finding optimal contracts is then reduced to identifying the minima that \textit{do} belong to the support of $\etastar$ together with the correct parameters specifying $\etastar$, a task that is in general easier than computing a full measure\footnote{This can again be compared with the common optimization technique in $\bbR^{N}$: when looking for extremes of a smooth function, one first finds the zeros of the gradient and then chooses the zeros that produce the desired extrema.}. Examples~\ref{ex:ex2-1:expectation_and_variance} and \ref{ex:ex2-3:VaR_and_expectation} below illustrate the procedure of identifying the minima, reducing the problem to a two-dimensional and one-dimensional optimization task, respectively. \hfill $\diamond$ 
\end{remark}

When $\calP$ and $\calG$ are regular across the whole of $\scrM$, more can be said.
\begin{proposition}\label{prop:suppport_minimum}
In the setting of Proposition~\ref{prop:support_minimizer}, assume we can take $\calC=\calD=\scrM$. Moreover, assume that the partial minimization function
\[
m(x) = \inf_{y\in [0,x]} r^{\ast}p_{\etastar}(x,y)+\lambda^{\ast}\cdot g_{\etastar}(x,y)
\]
is measurable. Then 
\begin{equation}\label{eq:support_minimum_II}
\etastar\left (\{(x,y)\in \calA_{R}\mid y \in \argmin\limits_{t\in [0,x]} r^{\ast}p_{\etastar}(x,t)+\lambda^{\ast}\cdot g_{\etastar}(x,t)\} \right) = 1.
\end{equation}
\end{proposition}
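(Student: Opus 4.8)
The plan is to deduce the claim from Proposition~\ref{prop:support_minimizer} by noting that, under the extra hypothesis $\calC=\scrM$, the auxiliary closed sets $I$ produced there can be taken to equal the full box $[0,x]$. Concretely, I would revisit the construction of $I$ in the proof of Proposition~\ref{prop:support_minimizer}: for $(x,y)\in\mathrm{Supp}(\etastar)$ with all coordinates of $x$ strictly positive, one puts $I=\overline{J}$ with $J=\{t\in\,]0,x[\;:\;0\text{ is a limit point of }E(t-y)\}$ and $E(t')=\{\varepsilon\in[0,\delta):\eta_{x,y,t',\varepsilon}\in\calC\}$. By Step~1 of that proof (and its boundary variant in Step~4, where $\delta=d((x,y),\partial\calA_R-(0,t'))>0$), the measure $\eta_{x,y,t',\varepsilon}$ already belongs to $\scrM$ for every admissible shift $t'$ and every $0\le\varepsilon<\delta$; hence $\calC=\scrM$ forces $E(t')=[0,\delta)$ for all those $t'$, so $J=\,]0,x[$ and $I=[0,x]$. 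Substituting $I=[0,x]$ into \eqref{eq:support_minimum}, we get that for every such $(x,y)$,
\[
r^{\ast}p_{\etastar}(x,y)+\lambda^{\ast}\cdot g_{\etastar}(x,y)=\min_{t\in[0,x]}\bigl(r^{\ast}p_{\etastar}(x,t)+\lambda^{\ast}\cdot g_{\etastar}(x,t)\bigr)=m(x),
\]
the minimum being attained because $[0,x]$ is compact and $r^{\ast}p_{\etastar}+\lambda^{\ast}\cdot g_{\etastar}$ is continuous; equivalently, $y\in\argmin_{t\in[0,x]}\bigl(r^{\ast}p_{\etastar}(x,t)+\lambda^{\ast}\cdot g_{\etastar}(x,t)\bigr)$, i.e.\ $(x,y)$ lies in the set $G$ whose $\etastar$-mass is at stake in \eqref{eq:support_minimum_II}.

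Next I would verify that $G$ is Borel, so that $\etastar(G)$ is well defined: writing $G=\calA_R\cap\{(x,y):r^{\ast}p_{\etastar}(x,y)+\lambda^{\ast}\cdot g_{\etastar}(x,y)=m(x)\}$, the function $(x,y)\mapsto r^{\ast}p_{\etastar}(x,y)+\lambda^{\ast}\cdot g_{\etastar}(x,y)-m(x)$ is measurable (as $p_{\etastar},g_{\etastar}$ are continuous and $m$ is measurable by hypothesis) and $\calA_R$ is closed. To conclude, I would invoke two standard facts. First, a Borel probability measure on the separable metric space $\bbR^n_+\times\bbR^n_+$ assigns full mass to its support (which is closed, hence Borel), so $\etastar$-a.e.\ $(x,y)$ lies in $\mathrm{Supp}(\etastar)$. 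Second, since the first marginal of $\etastar$ is $\mu$, which is absolutely continuous, $\etastar(\{(x,y):x_i=0\text{ for some }i\})\le\mu(\bigcup_i\{x:x_i=0\})=0$, so $\etastar$-a.e.\ $(x,y)$ also has all coordinates of $x$ strictly positive. For $\etastar$-a.e.\ $(x,y)$ both hold at once, and then the first paragraph gives $(x,y)\in G$; thus $\etastar(G)=1$, which is \eqref{eq:support_minimum_II}.

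The one genuinely delicate step is the identification $I=[0,x]$: it rests on a close reading of how $I$ is built in the proof of Proposition~\ref{prop:support_minimizer}, together with the observation that the only obstruction — a vanishing coordinate $x_i$, which would leave $]0,x[$ empty and break the construction — is confined to a $\mu$-null, hence $\etastar$-null, set and so does not affect the almost-everywhere statement \eqref{eq:support_minimum_II}. As an alternative route that bypasses this point, one can work directly from \eqref{eq:positive_lagrange} with $\calC=\scrM$: together with the integral-operator representation it says that $\etastar$ minimizes the linear functional $\eta\mapsto\int(r^{\ast}p_{\etastar}+\lambda^{\ast}\cdot g_{\etastar})\,d\eta$ over $\scrM$, whose minimum value equals $\int m\,d\mu$ (lower bound by disintegrating $\eta(dx,dy)=\mu(dx)\eta^x(dy)$; matching upper bound via a measurable selection $x\mapsto\sigma(x)\in\argmin_{t\in[0,x]}(r^{\ast}p_{\etastar}(x,t)+\lambda^{\ast}\cdot g_{\etastar}(x,t))$ and the treaty $(\mathrm{Id},\sigma)_\#\mu\in\scrM$); comparing with $\int(r^{\ast}p_{\etastar}+\lambda^{\ast}\cdot g_{\etastar})\,d\etastar\ge\int m\,d\mu$ then forces equality $\etastar$-a.e., which is \eqref{eq:support_minimum_II}.
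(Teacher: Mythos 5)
Your proposal is correct, and it is instructive that you offer two routes. Route 2 is essentially the paper's own proof: starting from \eqref{eq:positive_lagrange} with $\calC=\scrM$, they show that $\etastar$ minimizes the linear functional $\eta\mapsto\int(r^{\ast}p_{\etastar}+\lambda^{\ast}\cdot g_{\etastar})\,d\eta$ over $\scrM$, produce a competitor $s_{\#}\mu$ via a universally measurable section of the analytic set $M$ (Srivastava's selection theorem), obtain $\int h\,d\etastar\le\int m\,d\mu$, and conclude by comparing with the pointwise bound $h\ge m$.

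Route 1 is a genuinely different, and arguably cleaner, argument that the paper does not take. Instead of constructing a competitor and hence needing a measurable selection, you exploit the explicit description of the set $I=\overline{J}$ in Step~3 of the proof of Proposition~\ref{prop:support_minimizer}: with $\calC=\scrM$, Step~1 (and its boundary variant in Step~4) gives $\eta_{x,y,t,\varepsilon}\in\scrM$ for all admissible $t$ and all $0\le\varepsilon<\delta(t)$, so $E(t-y)=[0,\delta(t-y))$, hence $J=\,]0,x[$ and $I=[0,x]$ for every $(x,y)\in\mathrm{Supp}(\etastar)$ with $x>0$. Combined with the two standard facts that a Borel probability measure on a separable metric space gives full mass to its (Borel) support, and that $\{x_i=0\text{ for some }i\}$ is $\mu$-null by absolute continuity, this already yields $\etastar(M)=1$. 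The price the paper pays for its descriptive-set-theoretic route is exactly the selection step; your Route 1 sidesteps it, needing measurability of $m$ only so that $M$ is a well-defined Borel set (which is a hypothesis of the proposition in any case). Both routes use that hypothesis, so neither weakens the statement, but Route 1 makes clearer that the selection theorem is not essential for the conclusion once Proposition~\ref{prop:support_minimizer} is in hand.

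One small point worth making explicit if you write this up: when you pass to $I=[0,x]$ you are implicitly using that $\mathrm{Supp}(\etastar)\subset\calA_R$ (which holds because $\calA_R$ is closed and $\etastar(\calA_R)=1$), so that the points you treat really are in $\calA_R$ and the minimization over $[0,x]$ is the one appearing in $M$.
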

\begin{proof}
Let $h:\bbR^{n}_{+}\times \bbR^{n}_{+}\to \bbR$ be given by $h(x,y)=r^{\ast}p_{\etastar}(x,y)+\lambda^{\ast}\cdot g_{\etastar}(x,y)$ and let $M$ be the set appearing in \eqref{eq:support_minimum_II}. Observe first that 
\[
M = \{(x,y)\in \calA_{R}\mid h(x,y)-m(x)=0\},
\]
so measurability of $m$ implies that $M$ is measurable, and \eqref{eq:support_minimum_II} makes sense. The statement then follows once we show the inequality
\begin{equation}\label{prop:suppport_minimum:eq:minimum_integral}
    \int_{\bbR^{n}_{+}\times \bbR^{n}_{+}} h(x,y) \; \etastar(dx,dy) \leq \int_{\bbR^{n}_{+}}m(x)\; \mu(dx).
\end{equation}
Indeed, since $\int_{\bbR^{n}_{+}\times \bbR^{n}_{+}} m(x) \ \etastar(dx,dy) = \int_{\bbR^{n}_{+}}m(x)\ \mu(dx)$ and $h(x,y)\geq m(x)$ for every $(x,y)\in \calA_{R}$, from the inequality in \eqref{prop:suppport_minimum:eq:minimum_integral} it follows that even equality holds, thus $h(x,y)= m(x)$ $\etastar$-a.e., which is equivalent to \eqref{eq:support_minimum_II}.

To prove \eqref{prop:suppport_minimum:eq:minimum_integral}, we make use of Theorem 5.5.3 in \cite{srivastava2008course}, which states that every analytic subset $A$ of the product of two Polish spaces admits a section $s$ that is universally measurable, i.e., $s$ is measurable with respect to the completion of the Borel $\sigma$-algebra w.r.t.\ any probability measure. With $A=M$, it follows that there exists a function $s:\bbR^{n}\to \bbR^{n}\times \bbR^{n}$ such that $s(x)\in M$ and $\pi_{1}\circ s (x)=x$ for every $x\in \bbR^{n}$ (a \textit{section} of $M$), and such that $s$ is universally measurable. In particular, $s$ is measurable with respect to the $\mu$-completion of $\bbR^{n}$. Let $\eta$ be the probability measure on $\bbR^{n}\times \bbR^{n}$ given by $\eta=s_{\#}\mu$. Notice that by the $\mu$-measurability of $s$, $\eta$ is well defined and, moreover, it is a reinsurance contract. Hence, \eqref{eq:positive_lagrange}  
implies
\begin{multline*}
    r^{\ast}d\calP(\etastar;\eta-\etastar)+\lambda^{\ast}\cdot d\calG(\etastar;\eta-\etastar)\\  =  r^{\ast}\int_{\bbR^{n}_{+}\times \bbR^{n}_{+}} p_{\etastar}(x,y)(\eta-\etastar)(dx,dy)+ \lambda^{\ast}\int_{\bbR^{n}_{+}\times \bbR^{n}_{+}} g_{\etastar}(x,y)(\eta-\etastar)(dx,dy)\\
     = \int_{\bbR^{n}_{+}\times \bbR^{n}_{+}} h(x,y)(\eta-\etastar)(dx,dy)\geq 0,
\end{multline*}
so
\begin{align*}
    \int_{\bbR^{n}_{+}\times \bbR^{n}_{+}} h(x,y) \; \etastar(dx,dy)&\leq \int_{\bbR^{n}_{+}\times \bbR^{n}_{+}} h(x,y) \; \eta(dx,dy)\\
    &= \int_{\bbR^{n}_{+}} h\circ s(x) \; \mu(dx) = \int_{\bbR^{n}_{+}}m(x)\; \mu(dx),
\end{align*}
as desired.
\end{proof}

\begin{corollary}\label{cor:prop:suppport_minimum}
In the setting of Proposition~\ref{prop:suppport_minimum}, let $\calG=0$ and $\calP$ be given by
\[
\calP(\eta)= \int_{\bbR^{n}_{+}\times \bbR^{n}_{+}} p(x,y) \eta(dx,dy),
\]
for a continuous function $p:\bbR^{n}_{+}\times \bbR^{n}_{+}\to \bbR$, and define the set $M$ by
\[
M = \{(x,y)\in \calA_{R}\mid y \in \argmin\limits_{t\in [0,x]} p(x,t)\}.
\]
Then, $\eta\in \scrM$ is an optimal reinsurance contract if and only if $\eta(M)=1$.
\end{corollary}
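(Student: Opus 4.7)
The plan is to handle the two implications separately, using Proposition~\ref{prop:suppport_minimum} for the forward direction and a direct comparison argument for the converse. The key simplifying observation is that, since $\calP$ is linear in $\eta$, its directional derivative collapses into integration against $p$ itself: for any $\eta,h$ with $\eta+th\in\scrM$ we have $\calP(\eta+th)-\calP(\eta)=t\int p\, dh$, so $p_{\etastar}(x,y)=p(x,y)$ independently of $\etastar$.

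\textbf{Only if.} Suppose $\etastar\in \scrM$ is optimal. To apply Proposition~\ref{prop:suppport_minimum}, I take $\calC=\calD=\scrM$ (as allowed by the corollary's hypothesis). Since $\calG\equiv 0$ is constant on $\calC$, the second part of Proposition~\ref{prop:kkt} gives the inequality \eqref{eq:positive_lagrange} for \emph{every} $\lambda^{\ast}\geq 0$, and in particular we may fix $r^{\ast}=1$, $\lambda^{\ast}=0$. Since $p$ is continuous and the set-valued map $x\mapsto[0,x]$ is compact-valued and continuous, the function $m(x)=\min_{t\in[0,x]}p(x,t)$ is continuous, hence measurable, so all hypotheses of Proposition~\ref{prop:suppport_minimum} are satisfied. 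Conclusion \eqref{eq:support_minimum_II} then reads exactly $\etastar(M)=1$.

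\textbf{If.} Conversely, suppose $\eta\in\scrM$ satisfies $\eta(M)=1$. Then for $\eta$-a.e. $(x,y)$ we have $p(x,y)=m(x)$, so
\[
\calP(\eta)=\int_{\bbR^{n}_{+}\times\bbR^{n}_{+}} p(x,y)\,\eta(dx,dy)=\int_{\bbR^{n}_{+}\times\bbR^{n}_{+}} m(x)\,\eta(dx,dy)=\int_{\bbR^{n}_{+}} m(x)\,\mu(dx),
\]
where the last equality uses that ${\pi_1}_{\#}\eta=\mu$. For any competing $\eta'\in\scrM$, supported in $\calA_R$, the pointwise bound $p(x,y)\geq m(x)$ on $\calA_R$ yields
\[
\calP(\eta')=\int p(x,y)\,\eta'(dx,dy)\geq\int m(x)\,\eta'(dx,dy)=\int m(x)\,\mu(dx)=\calP(\eta),
\]
so $\eta$ is optimal.

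\textbf{Anticipated obstacle.} The only delicate point is making sure Proposition~\ref{prop:suppport_minimum} applies; concretely, that $m$ is measurable (handled above by continuity of $p$ and the compact box $[0,x]$) and that the ``constant $\calG$'' branch of Proposition~\ref{prop:kkt} legitimately yields the multiplier pair $(r^{\ast},\lambda^{\ast})=(1,0)$ needed to drop the $g_{\etastar}$ term. Once this bookkeeping is in place, both directions are essentially immediate.
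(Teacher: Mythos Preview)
Your proof is correct and follows essentially the same approach as the paper: both directions invoke the identification $p_{\etastar}=p$ (so $h=p$, $g_{\etastar}=0$), apply Proposition~\ref{prop:suppport_minimum} for the forward implication, and use $p(x,y)=m(x)$ on $M$ together with ${\pi_1}_\#\eta=\mu$ for the converse. Your ``if'' direction is a minor variant---you compare $\calP(\eta)$ directly against every competitor $\eta'$ via $p\geq m$, whereas the paper compares it to a fixed optimizer $\etastar$---but this is the same idea and arguably slightly cleaner since it does not rely on the existence of an optimizer.
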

\begin{proof}
Let $\eta\in \scrM$. If $\eta$ is optimal, Proposition~\ref{prop:suppport_minimum} implies $\eta(M)=1$, since $h=p_{\eta}=p$ and $g_\eta=0$ (i.e., the function $p_\eta$ is the same for all optimal contracts), so we only need to prove the reverse implication. However, this is immediate from (the proof of) Proposition~\ref{prop:suppport_minimum}, since if $\eta(M)=1$, then $p(x,y)=m(x)$ $\eta$-a.e., where $m$ is the partial minimization function, so
\begin{align*}
    \calP(\eta) &= \int_{\bbR^{n}_{+}\times \bbR^{n}_{+}} p(x,y) \; \eta(dx,dy) = \int_{\bbR^{n}_{+}\times \bbR^{n}_{+}} m(x) \; \mu(dx)\\
    &= \int_{\bbR^{n}_{+}\times \bbR^{n}_{+}} p(x,y) \; \etastar(dx,dy) = \calP(\etastar),
\end{align*}
where $\etastar$ is any optimal reinsurance contract. Hence $\eta$ is optimal.
\end{proof}
\begin{remark}\normalfont
In Proposition~\ref{prop:suppport_minimum}, it is necessary to require the partial minimization function to be measurable. While this might seem like an assumption that should usually be satisfied, in general the partial minimization operation yields only a lower semi-analytic function and one cannot ensure the set $M$ to be analytic (see Proposition 7.47 of \cite{bertsekas1996stochastic} for further details). This observation also explains why tools from descriptive set theory need to be used in a place which is seemingly unrelated. Note, however, that these tools are mostly required to ensure the existence of the section $s$. In some applications, the functions $p$ and $g$ are smooth enough so that one can show the existence of $s$ by more conventional methods (e.g., the implicit function theorem). \hfill $\diamond$ 
\end{remark}
\begin{remark}\normalfont
If for every $x\in\bbR^n$ the function $r^{\ast}p_{\etastar}(x,\cdot)+\lambda^{\ast}\cdot g_{\etastar}(x,\cdot)$ has exactly one minimizer, the section $s$ from the proof of Proposition~\ref{prop:suppport_minimum} is unique $\mu$-a.e and $\etastar = (\mathrm{Id},f)_\#(\mu)$ for some function $f$. 
However, it does not necessarily follow that a deterministic reinsurance contract is optimal, since $f$ might fail to be Borel-measurable. This might happen when, for instance, $f$ does not ``move measurably'' from one $x$ to the other, describing a measurable set of measure zero in $\bbR^n\times\bbR^n$ without a measurable projection into $\bbR^n$. For example, for $n=1$, let $E$ be a non-Lebesgue-measurable subset of $]0,\infty[$ and define
\[
E' = \{(x,y)\in\bbR^{2}_{+}\mid y = x \text{ if } x\in E\text{ and } y=0 \text{ otherwise}\}\subset D\cup (\bbR_{+}\times \{0\}),
\]
where $D=\{(x,x)\in\bbR^{2}_{+}\mid x\geq 0\}$ is the diagonal of $\bbR^{2}_{+}$. As $D\cup (\bbR_{+}\times \{0\})$ has Lebesgue-measure zero, $E'$ is Lebesgue measurable. Letting $p:\bbR_{+}\times\bbR_{+}\to \bbR_{+}$ be the distance to $E'$, $\calP(\eta) = \int p\;d\eta$ and $G=0$, we see that $p$ is continuous and the assumptions from Proposition~\ref{prop:suppport_minimum} are satisfied with partial minimization function identically zero. Moreover, $r^{\ast}p_{\etastar}(x,\cdot)+\lambda^{\ast}\cdot g_{\etastar}(x,\cdot) = p(x,\cdot)$ has exactly one minimum, namely, at $0$ or $x$,
so that $f(x) = x1_E(x)$, where $1_E$ is the indicator function of $E$. Hence, $f$ is not measurable, and so, by uniqueness, no deterministic reinsurance contract exists. Observe, however, that any contract supported on $\overline{E'}$ is optimal, so one can easily identify optimal contracts.
It is clear that this kind of pathology is not likely to appear in examples commonly happening in practice, however the fact that this kind of risk measure is considered within our assumptions points towards the generality of our setting. \hfill $\diamond$ 
\end{remark}
\begin{remark}\normalfont
Corollary~\ref{cor:prop:suppport_minimum} is, in a way, the best we can do in terms of fully identifying optimal reinsurance contracts. Coming back to the hypotheses and notation of Proposition~\ref{prop:suppport_minimum}, the same argument given in the proof of Corollary~\ref{cor:prop:suppport_minimum} shows that
\[
\int_{\bbR^{n}_{+}\times \bbR^{n}_{+}} h(x,y) \; \etastar(dx,dy)= \int_{\bbR^{n}_{+}\times \bbR^{n}_{+}} h(x,y) \; \hat{\eta}(dx,dy)
\]
for any $\hat{\eta}\in \scrM$ such that $\hat{\eta}(M)=1$. This implies that an equation analogous to \eqref{eq:positive_lagrange} is valid when we replace $\etastar$ by $\hat{\eta}$ on the second argument of $d\calP$ and $d\calG$, i.e.,
\[
r^{\ast}d\calP(\etastar;\eta-\hat{\eta})+\lambda^{\ast}\cdot d\calG(\etastar;\eta-\hat{\eta}) \geq 0, \quad \eta\in\scrM.
\]
However, this condition is not sufficient for optimality when $\calG$ is not constant. \hfill $\diamond$ 
\end{remark}

\section{\color{black}More general constraints}\label{secOT}

%In this section we will reformulate the optimization problem \eqref{eq.prob} as a transport problem. The first marginal $\mu$ is fixed to be the distribution of the portfolio $X$ to be reinsured, while $\nu$ is the distribution of the risk carried by the reinsurer, to be determined in an optimal way. This will correspond to the second marginal of the optimal treaty $\eta^*$ in \eqref{eq.prob}.
%This analysis will require: (i) defining the set of feasible distributions for the risk carried by the reinsurer, i.e.\ the possible second marginals $\nu$ to consider in \eqref{eq.ot}; (ii) identifying the appropriate cost function in \eqref{eq.ot}; (iii) ensuring the joint distribution $\eta$ of $\mu$ and $\nu$ to satisfy the requirements in Definition~\ref{def:ReInTreaty}.

{\color{black}In the previous section it was necessary that the set $\calS$ was described by a finite set of inequalities (finitely many constraints). In this section we drop this assumption and investigate which conclusions can be drawn when the constraints are more general (that is, possibly involving equalities or infinitely many constraints).\footnote{Recall from Remark~\ref{remthis} that introducing equalities through $g\le 0$ and $-g\le 0$ was not necessarily possible for positive $r^*$.} As indicated before, this is a rather hard task if we allow $\calP$ to be an arbitrary (lower semi-continuous) functional. 
We then make some assumptions, inspired by the idea of local linearization from the previous sections, which will allow us to express problem \eqref{eq.prob} in a similar fashion as in optimal transport.}

\begin{assumption}\label{assump:A1}
If $\etastar\in \calS$ is an optimal reinsurance contract, then, for every $\eta\in \calS$ and $0\leq t\leq 1$, we have 
\[
\calP(\eta^{\ast})\leq \calP((1-t)\eta^{\ast}+t\eta).
\]
\end{assumption}
\begin{assumption}\label{assump:A2}
For every $\eta\in \calS$, $d\calP(\eta;\cdot)$ exists for every direction in $\calS-\eta$ and is given as an integral operator, i.e., there exists a measurable function $p_{\eta}:\bbR^{n}\times\bbR^{n}\to\bbR$ such that, for every $\vartheta\in \calS$,
\[
d\calP(\eta;\vartheta-\eta) = \int p_{\eta}(x,y)(\vartheta-\eta)(dx,dy).
\]
\end{assumption}
Observe that Assumption \ref{assump:A2} implicitly assumes that the integral is finite. Given the considerations made in the previous sections, Assumption \ref{assump:A2} seems natural in order to linearize the problem. In contrast, Assumption \ref{assump:A1} might seem odd, as it may appear more natural to require $\calS$ to be convex instead. In our examples, this will most often be the case, but we prefer to phrase it this way to cover a larger amount of scenarios (for example, when $\calS$ is arbitrary and $\calP$ is concave, as is the case for Value-at-Risk in Example~\ref{ex:ex2}). The next proposition illustrates how this two requirements allow us to see the problem from a different perspective.

{\color{black}
\begin{proposition}
    Assumptions \ref{assump:A1} and \ref{assump:A2} jointly imply that, if $\etastar$ is an optimal reinsurance contract, then
\begin{equation}\label{eq:minimum_etastar_OT}
\int \qeta(x,y)\,\eta^{\ast}(dx,dy)= \min\limits_{\nu \in \pi_2(\calS)}\calC(\mu,\nu),
\end{equation} 
where $\qeta$ denotes the function on $\bbR^{n}\times\bbR^{n}$ such that $\qeta(x,y)=\peta(x,y)$ on $\calA_{R}$ and otherwise being equal to $+\infty$, $\pi_2(\calS)=\{{\pi_2}_\#\eta : \eta\in\calS\}$ and
\begin{equation}\label{eq:OT_problem}
\calC(\mu,\nu)= \min\limits_{\eta \in \Pi(\mu,\nu)\cap \calS} \int \qeta(x,y)\,\eta(dx,dy).
\end{equation} 
\end{proposition}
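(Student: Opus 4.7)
The plan is to use Assumptions~\ref{assump:A1}--\ref{assump:A2} to distill a variational inequality characterizing the optimal $\etastar$, and then unfold that inequality into the iterated transport formulation on the right-hand side of \eqref{eq:minimum_etastar_OT}.

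First I would derive the variational inequality. Fixing any $\eta\in\calS$, Assumption~\ref{assump:A1} forces the difference quotient $[\calP((1-t)\etastar+t\eta)-\calP(\etastar)]/t$ to be non-negative for every $t\in(0,1]$; letting $t\downarrow 0$ and invoking Assumption~\ref{assump:A2} yields
\[
0\ \le\ d\calP(\etastar;\eta-\etastar)=\int \peta(x,y)\,(\eta-\etastar)(dx,dy).
\]
Since both $\eta$ and $\etastar$ are reinsurance treaties, they concentrate on $\calA_{R}$, where $\qeta$ agrees with $\peta$; the $+\infty$ values of $\qeta$ off $\calA_{R}$ contribute nothing under these measures, so the integrand can be replaced by $\qeta$. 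This gives the clean statement
\[
\int\qeta\,d\etastar\ \le\ \int\qeta\,d\eta \quad\text{for every } \eta\in\calS.
\]

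Next I would unfold this into the iterated OT structure. Setting $\nustar:={\pi_2}_\#\etastar$, one has $\nustar\in\pi_2(\calS)$ and $\etastar\in\Pi(\mu,\nustar)\cap\calS$, so by the definition \eqref{eq:OT_problem} of $\calC$, immediately $\calC(\mu,\nustar)\le\int\qeta\,d\etastar$. Conversely, for any $\nu\in\pi_2(\calS)$ and any $\eta\in\Pi(\mu,\nu)\cap\calS$, the variational inequality just established gives $\int\qeta\,d\etastar\le\int\qeta\,d\eta$; minimizing the right-hand side first over such $\eta$ and then over $\nu$ produces the chain
\[
\int\qeta\,d\etastar\ \le\ \min_{\nu\in\pi_2(\calS)}\calC(\mu,\nu)\ \le\ \calC(\mu,\nustar)\ \le\ \int\qeta\,d\etastar,
\]
whose three inequalities must therefore all be equalities, which is exactly \eqref{eq:minimum_etastar_OT}, with the outer minimum attained at $\nustar$ and the inner one at $\etastar$.

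I do not expect a serious obstacle here: once the variational inequality is in hand, the rest is a direct chaining of definitions, without need of any compactness or approximation argument. The only bookkeeping concerns are the finiteness of the defining integral, which is built into Assumption~\ref{assump:A2}, and the fact that every $\eta\in\scrM$ gives $\calA_R$ full mass, which is what legitimizes the replacement of $\peta$ by $\qeta$ without conjuring any $+\infty$ contributions.
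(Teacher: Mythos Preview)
Your proposal is correct and follows essentially the same approach as the paper: derive the variational inequality $\int\peta\,d\etastar\le\int\peta\,d\eta$ for all $\eta\in\calS$ from Assumptions~\ref{assump:A1}--\ref{assump:A2}, replace $\peta$ by $\qeta$ using that treaties are supported on $\calA_R$, and then rewrite the single minimum over $\calS$ as the iterated minimum over $\nu\in\pi_2(\calS)$ and $\eta\in\Pi(\mu,\nu)\cap\calS$. Your unfolding via the chain $\int\qeta\,d\etastar\le\min_\nu\calC(\mu,\nu)\le\calC(\mu,\nustar)\le\int\qeta\,d\etastar$ is slightly more explicit than the paper's one-line ``casting the minimum as a double minimum'', but the argument is the same.
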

\begin{proof}
    Let $\etastar$ be an optimal contract and $\eta\in \calS$. Assumption \ref{assump:A1} implies
\begin{align*}
    0\leq \lim\limits_{t\to 0^+}\frac{ \calP((1-t)\eta^{\ast}+t\eta)-\calP(\eta^{\ast})}{t} = d\calP(\etastar;\eta-\etastar),
\end{align*}
so that, by Assumption \ref{assump:A2}, $\int \peta(x,y)\,\eta^{\ast}(dx,dy)\leq \int \peta(x,y)\,\eta(dx,dy)$. As $\eta$ was arbitrary and $\etastar\in\calS$, it follows that 
\begin{equation}\label{eq:minimum_etastar}
\int \peta(x,y)\,\eta^{\ast}(dx,dy)= \min\limits_{\eta \in \calS} \int \peta(x,y)\,\eta(dx,dy).
\end{equation} 
As all contracts are supported on $\calA_{R}$, we can replace $\peta$ by $\qeta$ in the integrals and by casting the minimum in \eqref{eq:minimum_etastar} as a double minimum, the proposition follows.
\end{proof}
}

{\color{black} The meaning of this is that minimizers of $\calP$ are solutions of \eqref{eq:minimum_etastar}, which gives a necessary condition for optimal contracts.} Equations \eqref{eq:minimum_etastar_OT}-\eqref{eq:OT_problem} mean that the optimal contract satisfies a double minimization property, where the inner minimum \eqref{eq:OT_problem} is a constrained optimal transport problem (the couplings need to satisfy the constraint of belonging to $\calS$). Note that we still face the issue from the previous section that the function $\peta$ depends on $\etastar$, and so does the cost function in \eqref{eq:OT_problem}. This means that we are facing transport problems depending on the optimal treaty that we are looking for. The idea behind \eqref{eq:OT_problem} is, however, similar to the one developed before in the sense that, for a large set of functionals, the function $\qeta$ will depend on $\etastar$ solely through a finite set of parameters which can be thought of as fixed at the beginning. The hope is that, by means of optimal transport techniques, one can provide information about the general structure of optimal couplings $\etastar$, for example about the geometric characterizations of their supports, and by leveraging this one can find the parameters which achieve the minimum in \eqref{eq:minimum_etastar_OT}.\footnote{Note, however, that this is not a requirement for optimality but rather a consequence. After having used the information to characterize optimal treaties, one still needs to find the one that minimizes $\calP$.} For example, if for every $\nu\in\pi_{2}(\calS)$ there exists an optimal Monge coupling for the OT problem \eqref{eq:OT_problem}, then $\etastar$ is also given by a deterministic reinsurance contract. This is evident by observing that, for an optimal reinsurance treaty $\etastar$, we have
\begin{equation*}
\int \qeta(x,y)\,\eta^{\ast}(dx,dy)=\calC(\mu,\nustar),
\end{equation*} 
where $\nustar={\pi_2}_\#\etastar$. Now, while existence of an optimal Monge coupling is a rather scarce property in OT problems, this observation is relevant enough to cover some interesting cases in the context of optimal reinsurance. This is illustrated in the following proposition, and then applied in Examples~\ref{ex:ex1} and \ref{ex:ex2} below.

\begin{proposition}\label{prop:monotonic_rea}
Assume $\calP$ is given by
\begin{equation}
\calP(\eta)=\calP_{1}(T_\#\eta)+\calP_{2}({\pi_2}_\#\eta),
\end{equation}
where $T$ is the linear operator $T:\mathbb{R}^{n}\times \mathbb{R}^{n}\to \bbR^{n}$ given by $T(x,y)=x-y$, and the functionals $\eta\mapsto \calP_{1}(T_\#\eta), \eta\mapsto\calP_{2}({\pi_2}_\#\eta)$ from $\scrM$ to $\bbR$ satisfy Assumption \ref{assump:A2} with functions $h_{\eta}, k_{\eta}:\bbR^{n}\to\bbR$, respectively. Then, if $\calP$ Assumption \ref{assump:A1}, it also satisfies and Assumption \ref{assump:A2} with $p_{\eta}$  given by 
\begin{equation}\label{prop:monotonic_rea:eq:cetastar}
p_{\eta}(x,y) = h_{\eta}(x-y) + k_{\eta}(y).
\end{equation}
Moreover, if $n=1$ and
\begin{itemize}
\item[(i)] for every $\nu \in \pi_{2}(\calS)$, we have $\Pi(\mu,\nu)\cap \scrM\subset \calS$,
\item[(ii)] $h_{\etastar}$ is strictly convex for every optimal treaty $\etastar$, and
\item[(iii)] the distribution of $X$ is continuous,
\end{itemize}
then a unique optimal Monge map exists for \eqref{eq:OT_problem} for every $\nu \in \pi_{2}(\calS)$. {\color{black}In particular, every optimal reinsurance contract is deterministic.}
\end{proposition}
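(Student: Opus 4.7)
I would tackle the two claims in turn. For the formula $p_\eta(x,y)=h_\eta(x-y)+k_\eta(y)$, the argument is routine: directional derivatives are additive on sums of functionals, so
\[
d\calP(\eta;\vartheta-\eta)=d[\calP_1\circ T_\#](\eta;\vartheta-\eta)+d[\calP_2\circ{\pi_2}_\#](\eta;\vartheta-\eta),
\]
and combining the hypothesis that each summand satisfies Assumption \ref{assump:A2} with the push-forward change of variables yields
\[
d[\calP_1\circ T_\#](\eta;\vartheta-\eta)=\int h_\eta(z)\,(T_\#\vartheta-T_\#\eta)(dz)=\int h_\eta(x-y)\,(\vartheta-\eta)(dx,dy),
\]
and analogously $d[\calP_2\circ{\pi_2}_\#](\eta;\vartheta-\eta)=\int k_\eta(y)\,(\vartheta-\eta)(dx,dy)$. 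Satisfaction of Assumption \ref{assump:A1} is taken as hypothesis.

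For the second assertion, fix an optimal $\etastar$ and any $\nu\in\pi_2(\calS)$. By the preceding proposition, the inner OT problem \eqref{eq:OT_problem} at this $\nu$ has feasible set $\Pi(\mu,\nu)\cap\calS=\Pi(\mu,\nu)\cap\scrM$ (using hypothesis (i) and $\calS\subset\scrM$), on which $q_{\etastar}=p_{\etastar}$. Since every coupling with second marginal $\nu$ integrates $k_{\etastar}(y)$ to the constant $\int k_{\etastar}\,d\nu$, the problem reduces to the 1D translation-cost OT problem
\[
\min_{\eta\in\Pi(\mu,\nu)\cap\scrM}\int h_{\etastar}(x-y)\,\eta(dx,dy).
\]

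At this stage I would invoke the classical one-dimensional OT result (see e.g.\ \cite{santambrogio2015optimal}, Chapter 2): for strictly convex $h_{\etastar}$ and atomless $\mu$, the unique minimizer over $\Pi(\mu,\nu)$ of $\int h_{\etastar}(x-y)\,d\eta$ is the comonotone coupling $\eta^{c}=(F_\mu^{-1},F_\nu^{-1})_\#\mathrm{Leb}_{[0,1]}$, which by atomlessness of $\mu$ takes the Monge form $\eta^{c}=(\mathrm{Id},F_\nu^{-1}\circ F_\mu)_\#\mu$. To confirm that $\eta^{c}$ belongs to the restricted feasible set (and hence, via (i), to $\calS$), I use that $\nu\in\pi_2(\calS)$ supplies some $\eta_0\in\scrM$ with ${\pi_2}_\#\eta_0=\nu$; the support condition $\eta_0(\calA_R)=1$ gives $Y_0\le X_0$ a.s.\ for $(X_0,Y_0)\sim\eta_0$, hence $F_\nu\ge F_\mu$ pointwise, so $F_\nu^{-1}\le F_\mu^{-1}$ on $(0,1)$ and $\eta^{c}$ is concentrated on $\{y\le x\}\subset\calA_R$. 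Uniqueness over $\Pi(\mu,\nu)$ transfers to the restricted set, yielding existence and uniqueness of the optimal Monge map at every $\nu\in\pi_2(\calS)$; specializing to $\nu=\nustar:={\pi_2}_\#\etastar$ forces $\etastar=\eta^{c}$, so every optimal contract is deterministic. The main subtlety I anticipate is the clean reduction to a pure translation-cost OT problem and the verification that its unconstrained comonotone optimizer already respects the reinsurance support $\calA_R$; both ultimately rest on the stochastic dominance $\nu\prec_1\mu$ implicit in $\nu\in\pi_2(\calS)$, after which everything follows from the 1D uniqueness.
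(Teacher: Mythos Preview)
Your proposal is correct and follows essentially the same route as the paper: both verify the formula for $p_\eta$ by additivity and change of variables, use hypothesis (i) to replace $\Pi(\mu,\nu)\cap\calS$ by $\Pi(\mu,\nu)\cap\scrM$, peel off the constant $\int k_{\etastar}\,d\nu$, invoke the one-dimensional Brenier/comonotone uniqueness for the strictly convex translation cost $h_{\etastar}(x-y)$, and check via $\nu\prec_1\mu$ that the resulting Monge coupling lies in $\scrM$ before specializing to $\nustar$. Your write-up is in fact slightly more explicit than the paper's in justifying the stochastic dominance $\nu\prec_1\mu$ from the existence of some $\eta_0\in\scrM$ with second marginal $\nu$.
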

\begin{proof}[Proof of Proposition~\ref{prop:monotonic_rea}]
It is easy to see that $\calP$ satisfies (A2) and $p_{\eta}$ is given as in \eqref{prop:monotonic_rea:eq:cetastar}, so we only need to show the second part of the statement. 
For this, fix $\nu \in \pi_{2}(\calS)$ and observe that by (A2), $\calC(\mu,\nu)$ is finite. The first condition means that the constraints depend only on the second marginal of any reinsurance treaty, so that for any optimal contract $\etastar$,
\begin{equation}\label{eq:min_peta_to_heta}
\begin{split}
       \min\limits_{\eta \in \Pi(\mu,\nu)\cap \calS} \int \peta(x,y)\,\eta(dx,dy)&=\min\limits_{\eta \in \Pi(\mu,\nu)} \int \peta (x,y)\,\eta(dx,dy) \\
&= c(\nu) +  \min\limits_{\eta \in \Pi(\mu,\nu)} \int h_{\etastar}(x-y)\,\eta(dx,dy) ,
\end{split}
\end{equation}
where $c(\nu)=\int_0^{\infty} k_{\etastar}(y)\,\nu(dy)$. The second and third condition from the statement of the proposition are technical conditions that ensure the existence of a unique optimal Monge map for the last minimum at the end of \eqref{eq:min_peta_to_heta}, given by
{\color{black}
\[
g_\nu = F_{\nu}^{-1}\circ F_{\mu},
\]
known as the Brenier map between $\mu$ and $\nu$; see \cite{villani2021topics}.} 
Since $\nu\prec_1\mu$, we have $g(x)\leq x$ for every $x\geq 0$, so $g$ {\color{black}(or better, $(\mathrm{Id},g_\nu)_\#\mu$)} is also an optimizer for $\calC(\mu,\nu)$. 
Since $\nu$ was arbitrary, the first part of the proposition follows. {\color{black}Finally, if $\etastar$ is an optimal contract, it is a solution of \eqref{eq:minimum_etastar}, so by uniqueness it follows that 
\begin{equation*}
\etastar=(\mathrm{Id},g_{\nustar})_\#\mu,
\end{equation*}
where $\nustar = {\pi_{2}}_\#\etastar$.
}
\end{proof}

\begin{remark}\label{remark:monge_1}
\normalfont
{\color{black} Notice that the assumption $n=1$ is crucial in the proof of Proposition~\ref{prop:monotonic_rea}. For general $n$, the Gangbo-McCann theorem implies that the minimum of the OT problem with cost $p_\etastar$ is achieved by an unique optimal Monge coupling $\pi^{\ast}\in \Pi(\mu,\nu)$ of the form 
\begin{equation*}\label{gm}
\pi^*=(\mathrm{Id},\tau^*)_\#\mu=(\mathrm{Id},\mathrm{Id}-(\nabla \peta)^{-1}\circ\nabla\varphi)_\#\mu,
\end{equation*}
for some $\peta$-concave function $\varphi$. These optimizers are optimal reinsurance contracts if and only if
\begin{equation*}\label{eq:gambo_is_rc}
    0\leq (\nabla \peta)^{-1}\circ\nabla\varphi\leq \mathrm{Id}.
\end{equation*}
The proof of the second part of Proposition~\ref{prop:monotonic_rea} uses the fact that, for $n=1$, the condition $\nu^\ast\prec_1\mu$ is enough to make \eqref{eq:gambo_is_rc} hold. However, this is not necessarily true in higher dimensions. Therefore, obtaining conditions on $\nustar$ ensuring that  \eqref{eq:gambo_is_rc} holds could be a direction for further research in the area.
}
\hfill $\diamond$ 
\end{remark}
In what follows, we use the notation $\pi_{i,j}$ for the composition $\bbR^{n}\times \bbR^{n}\to \bbR^{n}\to \bbR$ of the projection onto the $i^{th}$-coordinate, $i=1,2$, and then onto the $j$-th coordinate, $j=1,\ldots,n$.

\begin{proposition}\label{prop:factorization_marginals}
Assume that $\calP$ can be written as a composition $\calP=\calF\circ\calR$, where $\calR:\scrM\to \scrP(\bbR)^{n}$ is given by
\[
\calR(\eta)=({\pi_{2,1}}_\#\eta,\ldots,{\pi_{2,n}}_\#\eta)
\]
and $\calF:\scrP(\bbR)^{n}\to \bbR$ is a lower semi-continuous functional. Assume moreover that there exists a topological space $O$ such that $\calS$ is given as $\calS = \calU^{-1}(E)$ for some closed $E\subset O$, where $\calU = \calV\circ \calR$ and $\calV:\scrP(\bbR)^{n}\to O$ is continuous on the image of $\calR$. Let $C$ be a copula for the distribution $\mu$ of $X$.  Then, if $\etastar$ is an optimal reinsurance treaty
and $\nustar_{i}={\pi_{2,i}}_\#\etastar$, $i=1,\ldots,n$, the treaty $\vartheta$ whose distribution function is given by
\[
F_{\pi}(x_{1},\ldots,x_{n},y_{1},\ldots,y_{n}) = C(\min(F_{\mu_1}(x_{1}),F_{\nustar_{1}}(y_{1})),\ldots, \min(F_{\mu_n}(x_{n}),F_{\nustar_{1}}(y_{n})))
\]
is also optimal. In particular, if each $F_{\mu_i}$ is continuous, an optimal deterministic reinsurance contract exists, where the components are given by the functions
\[
{\color{black}R_i(x) = F_{\nustar_{i}}^{-1}\circ F_{\mu_i}(x)},
\quad i=1,\ldots, n.
\]
\end{proposition}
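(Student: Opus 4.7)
The plan is to exploit the structural hypotheses $\calP=\calF\circ\calR$ and $\calS=(\calV\circ\calR)^{-1}(E)$: they say that both the objective and the feasibility of a treaty $\eta$ depend only on the tuple of second one-dimensional marginals $\calR(\eta)=({\pi_{2,1}}_\#\eta,\ldots,{\pi_{2,n}}_\#\eta)$. So to prove optimality of the candidate $\vartheta$, I only need to verify that $\vartheta\in\scrM$ and $\calR(\vartheta)=\calR(\etastar)$; feasibility and optimality will then follow automatically from $\calU(\vartheta)=\calV(\calR(\vartheta))=\calV(\calR(\etastar))=\calU(\etastar)\in E$ and $\calP(\vartheta)=\calF(\calR(\vartheta))=\calF(\calR(\etastar))=\calP(\etastar)$.

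To get a concrete handle on $\vartheta$, I would construct it explicitly. Take $(U_1,\ldots,U_n)$ with joint distribution function $C$ on $[0,1]^n$ and set $X_i=F_{\mu_i}^{-1}(U_i)$, $R_i=F_{\nustar_i}^{-1}(U_i)$; define $\vartheta$ as the law of $(X_1,\ldots,X_n,R_1,\ldots,R_n)$. Using the identity $F^{-1}(u)\leq z\Leftrightarrow u\leq F(z)$, a direct computation gives
\[
\bbP(X_i\leq x_i,\,R_i\leq y_i\text{ for all }i)=\bbP(U_i\leq \min(F_{\mu_i}(x_i),F_{\nustar_i}(y_i))\text{ for all }i),
\]
which matches the $F_{\pi}$ displayed in the proposition. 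In particular, each pair $(X_i,R_i)$ is coupled via the Fréchet–Hoeffding upper bound (the comonotonic coupling of $\mu_i$ and $\nustar_i$), ${\pi_1}_\#\vartheta=\mu$ by Sklar's theorem, and ${\pi_{2,i}}_\#\vartheta=\nustar_i$ by construction.

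The main obstacle is verifying that $\vartheta(\calA_R)=1$, i.e.\ that $R_i\leq X_i$ a.s.\ for every $i$. The key observation is that $\etastar\in\scrM$ already lives on $\calA_R$, so $R_i^{\ast}\leq X_i^{\ast}$ a.s., which is equivalent to $\nustar_i\prec_1\mu_i$, hence to $F_{\nustar_i}\geq F_{\mu_i}$ pointwise, and therefore to $F_{\nustar_i}^{-1}\leq F_{\mu_i}^{-1}$ on $(0,1)$. Plugging this into the comonotonic coupling above yields $R_i=F_{\nustar_i}^{-1}(U_i)\leq F_{\mu_i}^{-1}(U_i)=X_i$ a.s., so $\vartheta\in\scrM$ and the first assertion follows.

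For the deterministic claim, continuity of each $F_{\mu_i}$ makes $F_{\mu_i}(X_i)$ uniform on $[0,1]$, so $R_i(x):=F_{\nustar_i}^{-1}(F_{\mu_i}(x))$ satisfies $R_i(X_i)\sim\nustar_i$ and $R_i(x)\leq x$ by the same stochastic-dominance argument. Using the copula identity $\bbP(F_{\mu_i}(X_i)\leq u_i\text{ for all }i)=C(u_1,\ldots,u_n)$, which is valid under continuity of the marginals, a short computation shows that $(\mathrm{Id},R)_\#\mu$ has distribution function $F_\pi$, hence equals $\vartheta$ and provides an optimal deterministic reinsurance contract of the stated componentwise form.
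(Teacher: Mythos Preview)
Your proof is correct and follows exactly the same approach as the paper: the key observation is $\calR(\vartheta)=\calR(\etastar)$, which immediately transfers both feasibility (via $\calU=\calV\circ\calR$) and optimality (via $\calP=\calF\circ\calR$), and the support condition $\vartheta(\calA_R)=1$ follows from the stochastic-dominance argument $\nustar_i\prec_1\mu_i$ inherited from $\etastar$. The paper's proof is a two-line sketch invoking the same ingredients (and referring back to the end of the proof of Proposition~\ref{prop:monotonic_rea} for the dominance argument), whereas you spell out the comonotonic construction and the copula computations explicitly; the underlying ideas are identical.
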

\begin{proof}
This is straightforward after noticing that $\calR(\eta^{\ast})=\calR(\pi)$. Observe that the distributions $\min(F_{\mu_i},F_{\nustar_{i}})$ represent the c.d.f's of the monotonic rearrangements of the individual claims with the $\nustar_{i}$'s, so by the argument given at the end of the proof of Proposition~\ref{prop:monotonic_rea}, $\vartheta$ is concentrated on $\calA_{R}$.
\end{proof}

Proposition~\ref{prop:factorization_marginals} simply states that if the risk measure and the constraints depend solely on the reinsured distribution, there is \emph{no need for randomization}. Observe moreover that the statement does not make use of the particular form of $\calP$ other than its factorization, so the statement is valid if we change the point of view and assume that ${\pi_2}_{\#}\eta$ represents the distribution of the deductible instead of the reinsured amount. Proposition~\ref{prop:monotonic_rea} differs from Proposition~\ref{prop:factorization_marginals}, in the sense that Proposition~\ref{prop:monotonic_rea} allows for a ``mixing'' between the deductible and the reinsured amount, while Proposition~\ref{prop:factorization_marginals} assumes $\calP$ is only determined by the reinsured amounts.

\section{Examples} \label{sec5}
We explore the techniques developed in the previous sections to illustrate some examples in optimal reinsurance that can be approached with this framework; see \cite[Ch. VIII]{albrecher2017reinsurance} for a systematic survey on optimal reinsurance problems. As in Proposition~\ref{prop:monotonic_rea}, in the sequel the linear operator $T:\mathbb{R}^{n}\times \mathbb{R}^{n}\to \bbR^{n}$ represents the vector of retained risks $T(x,y)=x-y$. For any $\nu\in\scrP(\bbR)$, we denote by $\overline{\nu}$ the mean of $\nu$, that is, $\overline{\nu}=\int y \nu(dy)$.

The following three examples exemplify how the results of Section~\ref{sec3} can be used. For ease of reading, several details of the rigorous derivation are deferred to the appendix. 
\begin{example}\label{ex:ex2-1:expectation_and_variance}\normalfont
In this example we deal with a problem originally considered by de Finetti in \cite{definetti1940} (see also Section 8.2.6.1 in \cite{albrecher2017reinsurance}). Here, a first-line insurer has $n$ sub-portfolios with insurance risks $X_1,\ldots, X_n$ and is looking for a reinsurance contract $R=(R_1,\ldots,R_n)$ that minimizes the aggregate expected loss after reinsurance, under a constraint on the retained aggregate variance. Assume that the premium for the $i$-th contract is computed according to an expected value principle with safety loading $\beta_i$, so that the total loss experienced by the first line insurer is given by 
\[
\sum_{i=1}^{n} (X_i-R_i +(1+\beta_i)\bbE[R_i]).
\]
Consider the case where all the $\beta_i$'s are different (e.g.\ because they represent different business lines) and w.l.o.g.\ ordered increasingly, i.e., $0<\beta_1<\cdots < \beta_n$. Assume further a bound on the retained variance, i.e., that for a constant $c$ s.t. $0<c<\Var(\sum_{i=1}^n X_i)$, the contract is required to satisfy $\Var(\sum_{i=1}^n (X_i-R_i))\leq c$.

The risk measure $\calP$ can be taken to be
\begin{equation}\label{thatone}
\calP(\eta)=\int\sum_{i=1}^{n}\beta_{i}y_{i} \; \eta(dx,dy)
\end{equation}
while $\calG$ can then be written as
\[
\calG(\eta)= \int\left(\sum_{i=1}^{n}(x_{i}-y_{i})\right)^{2}-\left(\sum_{i=1}^{n}\int(x_{i}-y_{i})\;\eta(dx,dy)\right)^{2}\; \eta(dx,dy) - c.
\] 
Notice that $\calP$ is obtained after taking expectations on the total loss, while $\calG$ is simply $\Var(\sum_{i=1}^n (X_i-R_i))- c$ written in terms of the measure $\eta$.

By means of Propositions~\ref{prop:support_minimizer} and \ref{prop:suppport_minimum}, it can be seen that the optimal reinsurance contract is deterministic and component-wise given by
\begin{equation}\label{ex:ex2-1:expectation_and_variance:solution}
    R_i(x) = \min\left(\left(\sum_{j=i}^{n}x_j - \frac{\beta_i}{2\lambda^{\ast}}-\sigma\right)_{+},x_i\right),
\end{equation}
see Appendix~\ref{appa} for details. \hfill $\diamond$ 
\end{example}
\begin{remark}\normalfont
    In the original problem considered by de Finetti, claims are independent
    and only quota-share contracts are considered for each subportfolio, i.e. with $R_i(x)=a_ix_i$. The optimal proportions are then determined as 
    \[
    a_i = \left(1 -\frac{\beta_i\bbE[X_i]}{2\lambda_{\mathrm{Fin}}\Var(X_i)}\right)_{+}
    \]
    with $\lambda_{\mathrm{Fin}} = \frac{1}{4c}\sum_{i=1}^{n}\frac{(\beta_i \bbE[X_i])^2}{\Var(X_i)}$, cf.\ \cite{definetti1940}. Observe that for some values of the $\beta_i$'s, the optimal proportions are then zero, which implies no reinsurance for that subportfolio. In contrast, the overall optimal solution \eqref{ex:ex2-1:expectation_and_variance:solution} of this problem (beyond the restriction to proportional treaties) leads to reinsurance for all subportfolios regardless of the size of risk loading (if the random variables are not almost surely bounded). As a numerical illustration, consider the case where $X_1$ has a $\Gamma(1/2,1/2)$ distribution and $X_2$ a (shifted) Pareto distribution with p.d.f. given by
    \[
    f_{X_2}(x)=324\left(x+3\right)^{-5}, \quad x\geq 0,
    \]
    so that $\bbE[X_1]=\bbE[X_2]=1$ and $\Var(X_1)=\Var(X_2)=2$. Let $\beta_1=0.1$ and $\beta_2=0.25$ (reflecting that relative risk loadings are typically higher for heavy-tailed risks). Assume that the first-line insurer would like to halve the retained total  variance, so that the bound on the retained variance is given by $c=2$. The optimal parameters for \eqref{ex:ex2-1:expectation_and_variance:solution} are $\sigma = 1.8026351$ and $\lambda^{\ast} = 0.0443408$, while the optimal proportions from de Finetti's solution are $a_1=0.6286093$ and $a_2=0.0715233$. Letting $\eta_{\mathrm{Fin}}$ denote the joint distribution implied by de Finetti's solution, we therefore obtain $\calP(\eta_{\mathrm{Fin}})= 0.0807417$, while $\calP(\etastar)= 0.0232948$ for the overall optimal contract \eqref{ex:ex2-1:expectation_and_variance:solution}. Observe that while this represents an improvement of $71.14\%$ of the objective function \eqref{thatone}, the overall expected loss for the cedent under $\eta_{\mathrm{Fin}}$ is $2.0807417$, while under $\etastar$ it is $2.0232948$, so the improvement is still visible, but considerably smaller.  \hfill $\diamond$ 
\end{remark}

\begin{example}\label{ex:ex2-3:VaR_and_expectation}\normalfont
Consider a variant of Example~\ref{ex:ex2-1:expectation_and_variance}, where instead of fixing the variance, the cedent has a constraint on the Value-at-Risk (at some level $\alpha$) of the total retained amount. Assume that $X$ has a density and that the bound on the VaR $c$ satisfies $0< c < \VaR_\alpha\left(\sum_{i=1}^{n} X_i\right)$ to avoid the optimal contract being the one given by full or no reinsurance. {\color{black} The risk measure $\calP$ is thus the same as before, while the functional $\calG$ can be taken to be
\begin{equation*}
    \calG(\eta) = \widehat{\VaR}_{\alpha}({T_{S}}_{\#}\eta) - c,
\end{equation*}
In this case Proposition~\ref{prop:suppport_minimum} is not applicable and one has to resort to the (weaker) Proposition~\ref{prop:support_minimizer}, with set $\calC$ taken as
\begin{equation}
\calC=\{\eta\in\scrM\mid \widehat{\VaR}_{\alpha}({T_{S}}_{\#}\eta)=v^{\ast}\}.
\end{equation}
For $v^{\ast}$ the $ \widehat{\VaR}_{\alpha}$ of an optimal reinsurance contract. By carefully examining the minima of the associated functions, we get that that there exist $d\geq 0$ such that the optimal reinsurance contract is deterministic and given by
\begin{equation}\label{ex:ex2-3:VaR_and_expectation:solution2}
    R(x) =  \left(x_1,\ldots,x_{i-1},\sum_{j=i}^{n}x_j-v^{\ast},0,\ldots,0\right)
\end{equation}
if $\sum_{j=1}^{i-1}\beta_jx_j+\beta_i\sum_{j=i}^{n}x_j-\beta_iv^{\ast}\leq d$, $\sum_{j=i+1}^{n}x_j\leq v^{\ast}$ and $\sum_{j=i}^{n}x_j> v^{\ast}$, or $R(x)=0$ otherwise. See Appendix~\ref{appb} for details.} \hfill $\diamond$ 
\end{example}

A couple of remarks are in order. 
\begin{remark}\label{rem:size_C}\normalfont
The choice of $\calC$ as in the previous example is motivated from the similar result found in \cite{guerra2012quantile} and is convenient because it allows us to get rid of the terms coming from $\calG$. Observe, however, that other sensible choices could have been
\[
\calC'=\{\eta\in \calD\mid \widehat{\VaR}_{\alpha}({T_{S}}_{\#}\eta)\leq v^{\ast}\}
\]
or
\[
\calC''=\{\eta\in \calD\mid \widehat{\VaR}_{\alpha}({T_{S}}_{\#}\eta)\geq v^{\ast}\},
\]
where $\calD$ is the set appearing in the statement of Proposition~\ref{prop:kkt}. While using these sets one might obtain deeper information from the measures $\eta_{x,y,t,\varepsilon}$, one is also left with the task of fully describing $\calC'$ and $\calC''$ and, in particular, proving that they are convex and satisfy \eqref{eq:convex_gateaux}. Hence, when choosing $\calC$, one needs to consider the trade-off between letting it be too big and the simplicity by which one can describe it (as otherwise one could simply take the largest convex set containing $\etastar$ and satisfying \eqref{eq:convex_gateaux}). \hfill $\diamond$ 
\end{remark}
\begin{remark}\normalfont
Observe that for $n=1$, the result in Example~\ref{ex:ex2-3:VaR_and_expectation} agrees with the result in Corollary 1 of \cite{guerra2012quantile} and the current framework might help to explain the resemblance of these results to the one in \cite{wang2005optimal}: Equation \eqref{eq:positive_lagrange} implies that Value-at-Risk constraints might be recast as an optimization of a Lagrangian, so both approaches lead to the same sort of optimal contracts. \hfill $\diamond$ 
\end{remark}
\begin{remark}\normalfont
Notice that in both examples above, the solution is not fully specified, but instead is given in terms of some unknown parameters ($\lambda^{\ast}$ and $\sigma$ in Example~\ref{ex:ex2-3:VaR_and_expectation}, and $v^{\ast}$ in Example~\ref{ex:ex2-1:expectation_and_variance}). As mentioned earlier, this is an unavoidable feature of our procedure, which arises from the dependence of $p_{\etastar}$ and $g_{\etastar}$ on $\etastar$. However, through \eqref{ex:ex2-1:expectation_and_variance:solution} or \eqref{ex:ex2-3:VaR_and_expectation:solution2} we can obtain an expression for $\calP(\etastar)$ where the only unknowns are these parameters. Hence, we may instead treat them as variables and optimize over them (ensuring that the constraints are still satisfied), thus obtaining a full description of the optimal contracts. \hfill $\diamond$ 
\end{remark}
One can easily generalize the approach from Example~\ref{ex:ex2-3:VaR_and_expectation} to consider slightly more complex constraints involving two or more levels for the VaR, for example, $\calG:\scrM \to \bbR^{m}$ with $i$-th component given by
    \[
    g_i(\eta) = \widehat{\VaR}_{\alpha_i}({T_{S}}_{\#}\eta) - c_i,
    \]
    with $1>\alpha_1>\cdots>\alpha_m>0$ and $0\leq c_1\leq \cdots\leq c_m$. In this case, one could choose the set $\calC$ as
    \[
    \calC=\{\eta\in \calD\mid \widehat{\VaR}_{\alpha_i}({T_{S}}_{\#}\eta)= v^{\ast}_i, \; i=1,\ldots, m\},
    \]
    where $v^{\ast}_i = \widehat{\VaR}_{\alpha_i}({T_{S}}_{\#}\eta)$ for an optimal reinsurance contract $\etastar$. Alternatively, this set can also be used to minimize weighted combinations of values-at-risk at different levels, i.e., risk measures of the form $\sum_{i=1}^{m}\beta_i\widehat{\VaR}_{\alpha_i}({T_{S}}_{\#}\eta)$ under, say, a budget constraint in the premium. Such an approach is implicitly used in reinsurance practice when trying to fix quantiles of the target distribution of the cedents in the context of regulatory ruin (where different measures apply to different ``degrees" of insolvency, cf.\ \cite[Ch.8]{albrecher2017reinsurance}). 
    
    For $n=1$, one can consider another direction of generalization of Example~\ref{ex:ex2-3:VaR_and_expectation}: let $\rho$ and $\omega$ denote non-decreasing functions with $\rho(0)=\omega(0)=0$ and $\rho(1)=1$. For $\delta>0$ one can consider the risk measure given by
    \[
    \calP(\eta) = \int (1-\delta)(x-y)\;\eta(dx,dy) + \int_{0}^\infty \omega\left({\pi_2}_\#\eta(t,\infty)\right)\, dt + \delta\int_{0}^\infty \rho\left({\pi_2}_\#\eta(t,\infty)\right)\, dt.
    \]
    This risk functional corresponds to minimizing the risk-adjusted liability of the cedent, a scenario considered by \cite{cheung2017characterizations}. The risk incurred by the first-line insurer is measured through a distortion risk measure, which is then represented by the last integral in the definition of $\calP$. We do not study this case here, but note that the techniques seem to extend to this case by letting $\calC$ capture the discontinuities/points of non-differentiability of the functions $\omega$ and $\rho$.

    At this point, one can see that Propositions~\ref{prop:support_minimizer} and \ref{prop:suppport_minimum} can be applied to several situations and are particularly well-suited whenever the risk measure or the constraints can be written by means of (functions of) integrals. As a further example, we mention that the methodology can be applied to deal with more complex situations such as the ones considered in \cite{kaluszka2004mean}. There, one would like to minimize risk measures of the form 
    \[
    \calP(\eta) = f\left(\int_{\bbR^{n}_+}p_1(x-y)\;\eta(dx,dy),\ldots,\int_{\bbR^{n}_+}p_\ell(x-y)\;\eta(dx,dy) \right)
    \]
    subject to the constraints $\calG = (g_1,\ldots,g_m)$ given by 
    \[
    g_i(\eta) = h_i\left(\int_{\bbR^{n}_+}q_{i,1}(y)\;\eta(dx,dy),\ldots,\int_{\bbR^{n}_+}p_{i,\ell_{i}}(y)\;\eta(dx,dy) \right),
    \]
    where all the $p_i$'s and $q_{i,j}$'s are (multivariate) rational functions and $f$ and the $h_i$'s are differentiable.\footnote{The setting in \cite{kaluszka2004mean} allows for equalities in the constraints, and for $f$ and the $h_i$'s to not be differentiable at the expense of being increasing in one variable. Equalities can be handled in the same way as in Examples~\ref{ex:ex2-1:expectation_and_variance} and \ref{ex:ex2-3:VaR_and_expectation}, and while we cannot get rid of the differentiability requirement, all of the examples in \cite{kaluszka2004mean} seem to satisfy this as well.} For some particular choices of functions, one can even immediately see that the solutions are deterministic  by means of the techniques developed in Section~\ref{secOT} (for example, for the case when $\calP$ is given as the variance and the constraints depend only on the second marginal, which corresponds to the situation in Theorem 1 in \cite{kaluszka2004mean}).
    
    Loosely speaking, our results also give an intuitive explanation to the ubiquity of stop-loss contracts in optimal reinsurance problems: often, the function $p_{\etastar}+\lambda^{\ast}\cdot g_{\etastar}$ can be written in the form
    \[
    p_{\etastar}(x,y)+\lambda^{\ast}\cdot g_{\etastar}(x,y) = \hat{p}_{\etastar}(x-y)+\lambda^{\ast}\cdot \hat{g}_{\etastar}(x-y)
    \]
    for some functions $\hat{p}_{\etastar}$ and $\hat{g}_{\etastar}$ such that $\hat{p}_{\etastar}+\lambda^{\ast}\cdot \hat{g}_{\etastar}$ has one minimum. According to Propositions \ref{prop:support_minimizer} and \ref{prop:suppport_minimum}, the optimal contract then needs to satisfy $x-y=c$ for some constant $c$, which together with the condition $(x,y)\in \calA_R$, implies that $y=(x-c)_{+}$, which is the form of a stop loss contract.

Finally, we would like to point out that while the contracts in \eqref{ex:ex2-1:expectation_and_variance:solution} and \eqref{ex:ex2-3:VaR_and_expectation:solution2} are deterministic --- in the sense that knowledge of $X$ implies knowledge of $R(X)$, --- the contracts for the individual subportfolios are still random, since $X_i$ stand-alone is not enough to fully specify $R_i(X)$. This is in contrast to \cite{guerra2021reinsurance}, where it is enforced that, conditional on $X_i$, $R_i(X)$ is independent of the remaining contracts in the portfolio. While going into separate contracts with potentially different reinsurers with such marginally random contracts may be challenging in current reinsurance practice, reinsuring all these subportfolios with the same reinsurer (but possibly different safety loadings $\beta_i$, e.g.\ due to different business lines) may be quite feasible. Compared to alternatives, such a contract just leads to a slightly more involved (but deterministic) formula for settling the overall reinsured amount once all claim data for the considered time period are available. In some sense, a part of the risk diversification is done \textit{in house} this way, which is quite common for certain types of aggregate reinsurance covers in practical use, see e.g.\ \cite{albrecher2017reinsurance}.

The following examples (re)examine some of the classical problems in optimal reinsurance through the lens of optimal transport. We use the results we developed in Section~\ref{secOT}.
\begin{example}\label{ex:ex1}\normalfont
Let $n=1$ and assume that $X$ has finite variance. Let $\calP$ be given by
\begin{equation*}
	\calP(\eta)= \widehat{\Var}(T_{\#}\eta):=\int x^{2}\,T_{\#}\eta(dx) - \left(\int x \, T_{\#}\eta(dx)\right)^{2}
\end{equation*}
and $S=\{\eta\in \scrM\mid \int y\, {\pi_{2}}_{\#}\eta(dy) = c\}$ for some $c\geq 0$. This is the classical example (see e.g.\ \cite{Pesonen1984}) where the objective is to minimize the retained variance of the insurer subject to a fixed reinsurance premium which is computed through the expected value principle. In this case, the optimal reinsurance contract is known to have the deterministic form 
\begin{equation}\label{eq.optVar}
    \eta^{*}=(\mathrm{Id},R_{SL,a^{*}})_\#(\mu)\quad \text{ for some $a^{*}\geq 0$},
\end{equation}
where, for $a\geq 0$, $R_{SL,a}$ is the function on $\bbR$ given by $R_{SL,a}(x)=(x-a)_{+}$, i.e., a stop-loss contract is optimal. Note that finiteness of the variance of $X$ implies that the set $S$ is closed. In order to apply the results from Section~\ref{secOT}, observe that
\begin{equation}\label{prec1}
\begin{split}
\inf_{\eta\in S}\calP(\eta)&= \inf_{\eta\in\scrM, \overline{{\pi_2}_\#\eta}=c}\int_{\bbR\times\bbR}(x-y)^2\eta(dx,dy)-\left(\int_{\bbR\times\bbR}(x-y)\eta(dx,dy)\right)^2 \\
&= \inf_{\eta\in\scrM, \overline{{\pi_2}_\#\eta}=c}\int_{\bbR\times\bbR}(x-y)^2\eta(dx,dy)-\left(\overline{\mu}-c\right)^2\\
&= \inf_{
\substack{\nu\in\calP(\bbR), \\ \overline{\nu}=c, \nu\prec_1\mu}}\inf_{\eta\in\Pi(\mu,\nu)}\int_{\bbR\times\bbR}(x-y)^2\eta(dx,dy)-\left(\overline{\mu}-c\right)^2.
\end{split}
\end{equation}
%Since the second term is constant, this corresponds to the problem
%\begin{equation}
%\inf_{\eta\in\scrM, \overline{{\pi_2}_\#\eta}=c}\int_{\bbR\times\bbR}(x-y)^2\eta(dx,dy)= \inf_{
%\substack{\nu\in\calP(\bbR), \\ \overline{\nu}=c, \nu\prec_1\mu}}\inf_{
%\substack{\eta\in\Pi(\mu,\nu), \\ \eta(y\leq x)=1}
%}\int_{\bbR\times\bbR}(x-y)^2\eta(dx,dy).
%\end{equation}
From \eqref{prec1}, we observe that the conditions of Proposition \ref{prop:monotonic_rea} are satisfied. Hence, it follows that, with
\[
x\mapsto g_\nu(x):=F_\nu^{-1}\circ F_{\mu}(x),
\]
the coupling $\pi_\nu:=(\mathrm{Id},g_\nu)_\#\mu$ is optimal for the {\color{black}inner (classical OT) problem in the last line of \eqref{prec1}}, so we get
\begin{equation}\label{eq.co}
\inf_{\eta\in\scrM, \overline{{\pi_2}_\#\eta}=c}\int_{\bbR\times\bbR}(x-y)^2\eta(dx,dy)=  \inf_{
\substack{\nu\in\calP(\bbR), \\ \overline{\nu}=c, \nu\prec_1\mu}}\int_{\bbR}(x-g_\nu(x))^2\mu(dx).
\end{equation}
It follows that the optimizer is given by the contract $\eta^*$ 
given in \eqref{eq.optVar}, with $R_{SL,a^{*}}=g_{\nu^*}$, for $\nu^*$ minimizer in \eqref{eq.co}.
Indeed, we want to minimize the integral w.r.t.\ $\mu$ of $(x-g_\nu(x))^2$, over functions $g_\nu$ which are non-increasing, below $\mathrm{Id}$, and such that the area below them (i.e.\ the integral w.r.t. $\mu$) is fixed (equal to $c$). Then clearly the optimal $g_{\nu^*}$ is parallel to $\mathrm{Id}$, thus of the form $(x-a^*)_+$, with $a^*$ determined by the constraint $\mathbb{E}[(X-a^*)_+]=\int y\, ({g_{\nu^*}}_\#\mu)(dy)=\int y\, \nu^*(dy) = c$. Hence, the OT approach provides an alternative proof of this classical result. \hfill $\diamond$ 
\end{example}

\begin{example}\label{ex:ex2}\normalfont
If we modify the previous example by setting instead 
\[
	S=\{\eta\in \scrM\mid \widehat{\Var}({\pi_{2}}_{\#}\eta) = c\}
\]
for some $c\geq 0$, we obtain the situation where the objective is still to minimize the retained variance, but now subject to a fixed reinsurance premium loading that is proportional to the variance (cf.\ \cite{Pesonen1984}). In this case, the optimal reinsurance contract is known to be deterministic and of the form $\eta^{*}=(\mathrm{Id},R_{QS,a^{*}})_\#(\mu)$ for some $0\leq a^{*}\leq 1$, where $R_{QS,a}(x)=ax$, i.e., a so-called quota-share contract is optimal. Let $\calP$ denote the same functional as in the previous case, and observe that it still satisfies Assumptions \ref{assump:A1} and \ref{assump:A2} above with function $p_{\eta}$ given by
\[
p_{\eta}(x,y) = \left(x-y\right)^{2}-2\overline{{T}_{\#}\eta}(x-y).
\]
This function and $S$ satisfy the conditions from Proposition~\ref{prop:monotonic_rea}, so it follows that, for $\nu\in \pi_2(\calS)$, the minimum of \eqref{eq:minimum_etastar_OT} is achieved through couplings $\pi_\nu$ of the form $\pi_\nu:=(\mathrm{Id},g_\nu)_\#\mu$ with
\[
x\mapsto g_\nu(x):=F_\nu^{-1}\circ F_{\mu}(x).
\]
Plugging this coupling into the definition of $\calP$, we obtain
\begin{align*}
    \calP(\pi_\nu) &= \int_{0}^{\infty}\left(x-g_\nu(x)\right)^{2}\;\mu(dx) - \left(\int_{0}^{\infty}(x-g_\nu(x)) \; \mu(dx)\right)^{2}\\
    &=\int_{0}^{1}\left(F_{\mu}^{-1}(x)-F_\nu^{-1}(x)\right)^{2}\;dx - \left(\int_{0}^{1}F_{\mu}^{-1}(x)-F_\nu^{-1}(x) \; dx\right)^{2}
\end{align*}
and the problem is reduced to finding the optimal distribution function $F_\nu$. Deferring the computations to Appendix \ref{appc}, we find that the optimal reinsurance contract is given by
\[
R(X) = \frac{X-a}{(1-\lambda)},
\]
for  $\lambda = 1 - \sqrt{\Var(X)/c}$ and any $0 \leq a\leq F_{\mu}^{-1}(0)$. Choosing $a = 0$, we obtain the quota-share contract known to be optimal \cite{Pesonen1984} (and any other choice of $a$ would just lead to a deterministic (``side'') payment from the reinsurer to the insurer (see e.g.\ \cite{gerberpafumi}), which would be priced in the reinsurance premium in an additive way, as its variance is zero, and so would only lead to a deterministic additional exchange and serve no purpose). Again, the OT approach in this way provides an alternative proof of this classical result. 

Observe that in this situation one cannot directly apply Proposition~\ref{prop:factorization_marginals}: given an arbitrary reinsurance treaty $\eta$, the monotonic rearrangement between $\mu$ and $\nu=T(\eta)$ leads to a function $R$ such that $R(X)$ is distributed according to $\nu$. However, it is in general not the case that $X-R(X)$ is distributed according to $\pi_{2}(\eta)$, so we cannot guarantee that $\Var(X-R(X))=c$. \hfill $\diamond$ 
\end{example}
For the next example, we take the viewpoint of the reinsurer in the optimization problem. It will lead to a situation where introducing \emph{external randomness} is indeed optimal. 
\begin{example}\label{ex:ex6}\normalfont
For simplicity of exposition, here the second marginal of a reinsurance treaty will refer to the deductible rather than the reinsured amount.
\newpage
\noindent\textit{{\color{black}General formulation}}\\[0.2cm]
For each $k=1,\ldots, n$, let $\nu_{k}\in \scrP(\bbR^{+})$ denote a predefined distribution. For any lower semi-continuous $\calP$, we can set 
\[
S=\{\eta \in \scrM: {\pi_{2,k}}_{\#}\eta=\nu_{k},\; k=1,\ldots,n\}.
\]
One can interpret this situation as follows: $n$ insurers ask each for a target distribution $\nu_{k}$, $k=1,\ldots,n$, after reinsurance, and the reinsurer tries to minimize $\calP$ respecting these target distributions (potentially involving the introduction of randomized treaties). Phrased in optimal transport terms, this example corresponds to a problem in the area of multi-marginal optimal transport, a generalization from the classical transport problem in which there might be more than one target measure. While we do not attempt to solve the problem in general, we point out some of the insights obtained from seeing the problem from this perspective and solve it for one particular case. Consider the case $n=2$, $\mu$ absolutely continuous with finite second moments and $\calP$ given as the variance of the sum of the reinsured amounts. For any $\eta\in S$, we then have
\begin{align*}
    \calP(\eta) &= \int_{\bbR^{4}_{+}}(x_1-y_1+x_2-y_2)^2\;\eta(dx_1,dx_2,dy_1,dy_2) - \left(\overline{\mu_1}-\overline{\nu_1}+\overline{\mu_2}-\overline{\nu_2}\right)^2.
\end{align*}
The second term on the right-hand side is fixed. Hence, the problem is equivalent to minimizing the functional 
\begin{equation}\label{eq:ex6:q}
\calQ(\eta) =\int_{\bbR^{4}_{+}}(x_1-y_1+x_2-y_2)^2\;\eta(dx_1,dx_2,dy_1,dy_2) 
\end{equation}
on $\Pi(\mu,\nu_1,\nu_2)\cap \scrM$, the set of couplings between $\mu$, $\nu_{1}$ and $\nu_{2}$ supported on $\calA_{R}$. Since $X_{1}$ and $X_{2}$ have finite variance, this minimization problem is finite. As can be seen from quick inspection, the task of solving the OT problem with the reinsurance restrictions is challenging and for arbitrary $\mu$, there is no guarantee that there even exist solutions that can be expressed in terms of elementary terms so that numerical solutions have to be considered. Nevertheless, we provide an illustration of how this can lead to explicit results in specific cases. \\[0.2cm]
\textit{{\color{black}Numerical solution: Setting}}\\[0.2cm]
Assume $X_1$ and $X_2$ are independent, $X_1$ has a lognormal distribution with p.d.f. given by
\[
f_{X_1}(x)=\frac{1}{x\sqrt{2\pi\log(3)}}\exp\left(-\frac{(\log (\sqrt{3}x) )^2}{2\log(3)}\right), \quad x>0,
\]
and $X_2$ a (shifted) Pareto distribution with p.d.f given by
\[
f_{X_2}(x)=324\left(x+3\right)^{-5}, \quad x\geq 0.
\]
Here the parameters are chosen such that $\bbE[X_1]=\bbE[X_2] = 1$ and $\Var(X_1)=\Var(X_2)=2$. As there is no standard solution method for the multi-marginal transportation problem with arbitrary cost, we utilize a discretized setting (see e.g.\ \cite{peyre2019computational} and Appendix \ref{appd} for details). For $N\geq 1$, let $\tilde{X}_1$ and $\tilde{X}_2$ be discretized versions of $X_1$ and $X_2$ obtained after binning the $X_i$'s into $N$ bins of equal length up to a (high) quantile, assigning probabilities according to their distributions and putting the remaining mass into the last bin to account for the unboundedness of the distributions. Let $\tilde{Y}_1$ and $\tilde{Y}_2$ be two random variables such that
\[
\tilde{Y}_1 \overset{d}{=} 0.5\tilde{X}_1, \quad \tilde{Y}_2 \overset{d}{=} \min(\tilde{X}_2,0.5)+0.25(\tilde{X}_2-0.95)_+,
\]
where $\overset{d}{=}$ denotes equality in distribution. The idea behind this choice is that $\tilde{Y}_1$ could arise from applying a quota-share contract (with proportionality factor 0.5) to $\tilde{X}_1$, while $\tilde{Y}_2$ could be the retained amount from a bounded stop-loss contract on $\tilde{X}_2$ (with deductible 0.5 and layer size 0.45), where the reinsurer still takes 75\% of the exceedance above that layer. The distributions $\tilde{\nu}_1$ and $\tilde{\nu}_2$ of $\tilde{Y}_1$ and $\tilde{Y}_2$ respectively are the target distributions of the two insurers, and the task is now to see how the reinsurer can offer these while keeping the variance of $(\tilde{X}_1-\tilde{Y}_1)+(\tilde{X}_2-\tilde{Y}_2)$ (the reinsured amount) minimal, for instance in order to provide a competitive reinsurance premium. Let $\tilde{\mu}$ denote the joint distribution of $(\tilde{X}_{1},\tilde{X}_{2})$.\\[0.2cm]
\textit{{\color{black}Numerical solution: Results}}\\[0.2cm]
The problem corresponds to the multimarginal transport problem of moving mass from $\tilde{\mu}$ to $\tilde{\nu}_1$ and $\tilde{\nu}_2$ with cost \eqref{eq:ex6:q}. Although potentially high-dimensional, this is a relatively easy linear optimization exercise, which for $N=40$, we solve by using standard linear optimization packages. The results for some of the bivariate distributions of $(\tilde{X}_1,\tilde{X}_2,\tilde{Y}_1,\tilde{Y}_2)$ are shown in Figures~\ref{fig:pmfXsYs} and \ref{fig:remaining_pmf}. Let $\eta_{\mathrm{Det}}$ denote the distribution of 
\[
(\tilde{X}_1,\tilde{X}_2,0.5\tilde{X}_1,\min(\tilde{X}_2,0.5)+0.25(\tilde{X}_2-0.95)_+),
\]
$\etastar$ the distribution found by optimization and denote by $\tilde{R}_i$ the reinsured amount $\tilde{X}_i-\tilde{Y}_i$. 
\begin{figure}[H]
  \begin{subfigure}{.5\linewidth}
  	\includegraphics[width=\linewidth]{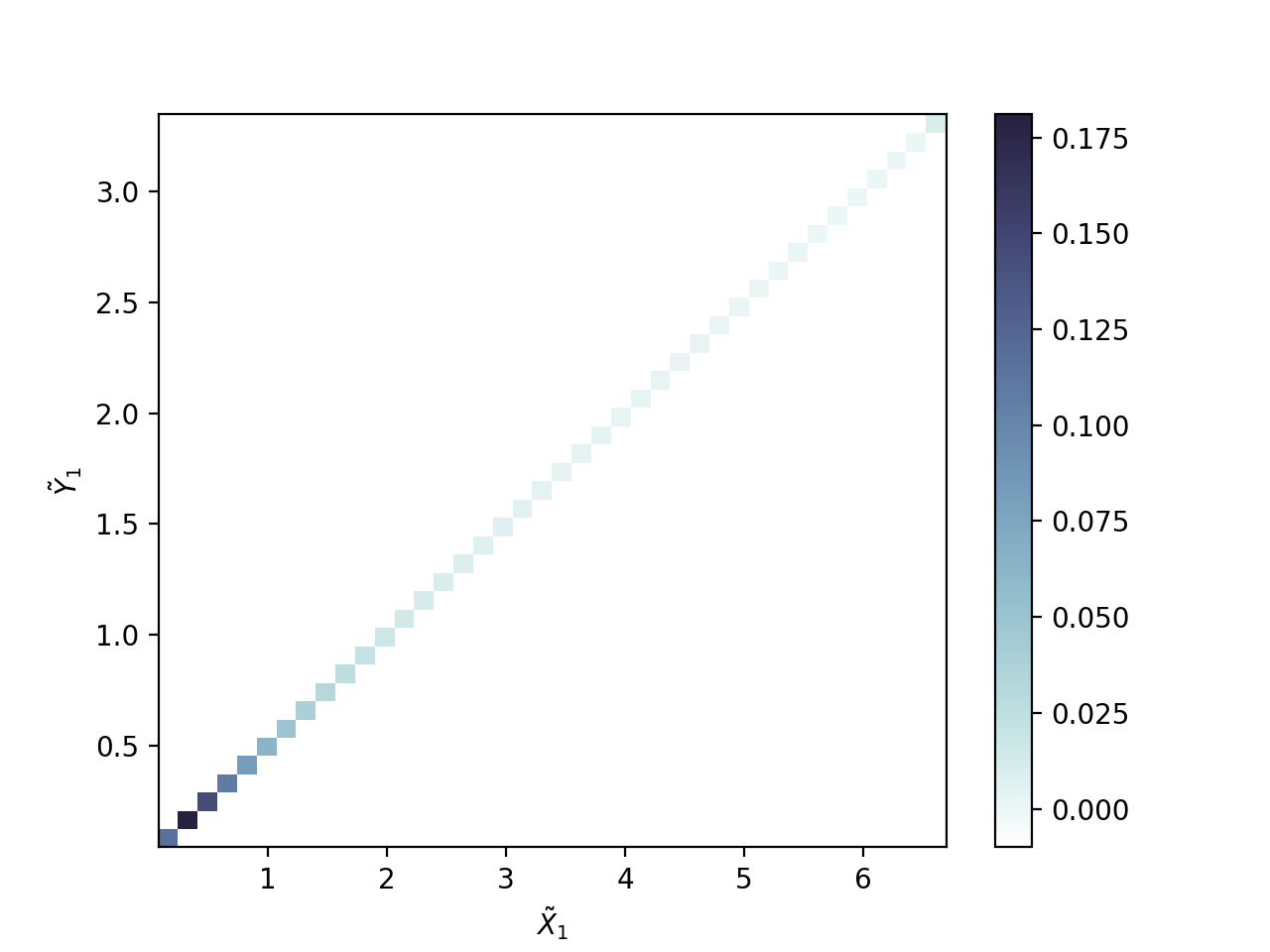}
  	\label{fig:pmfX1Y1:det}
  \end{subfigure}%
  \begin{subfigure}{.5\linewidth}
  	\includegraphics[width=\linewidth]{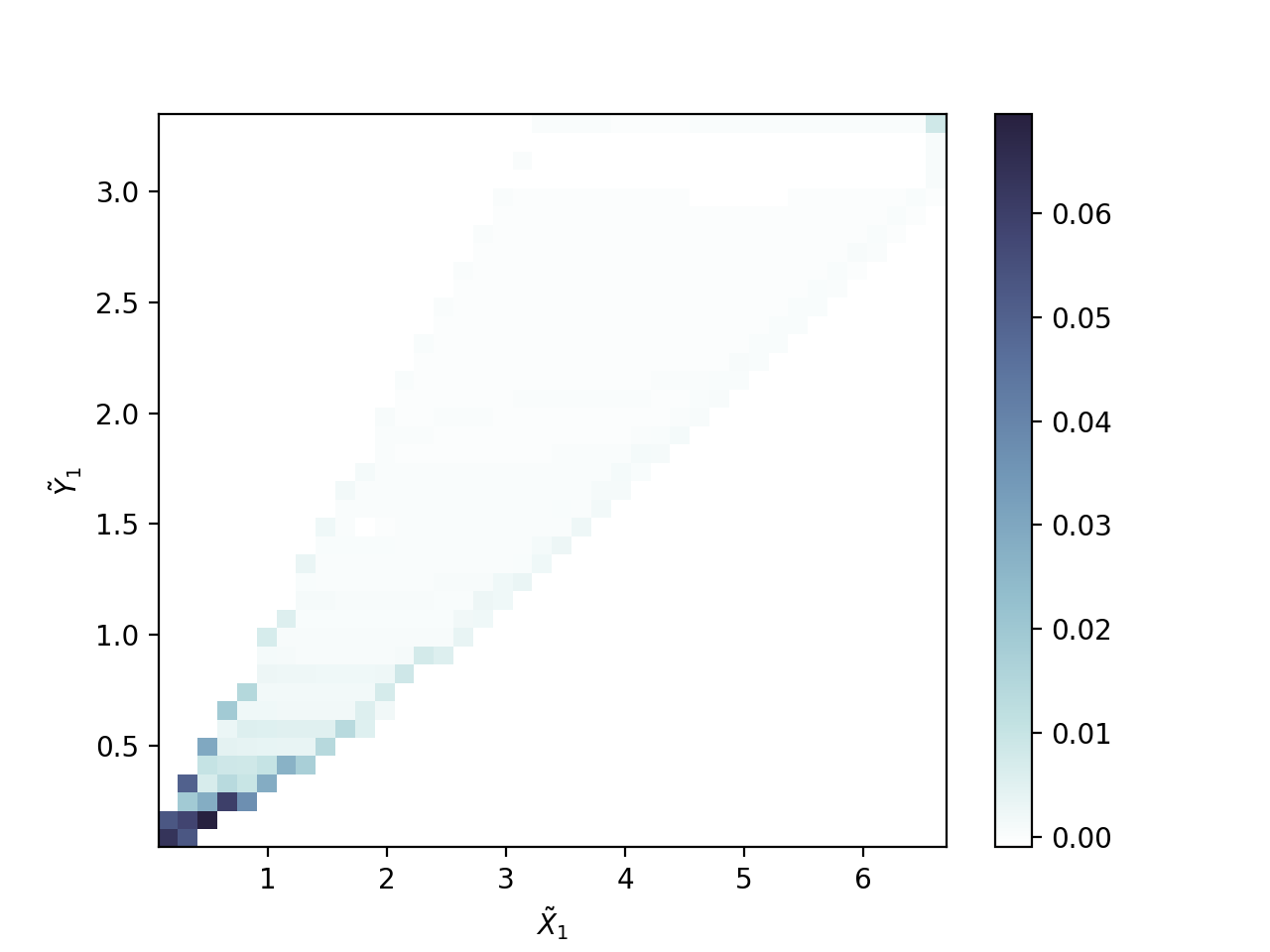}
  	\label{fig:pmfX1Y1:OT}
  \end{subfigure}
  \begin{subfigure}{.5\linewidth}
  	\includegraphics[width=\linewidth]{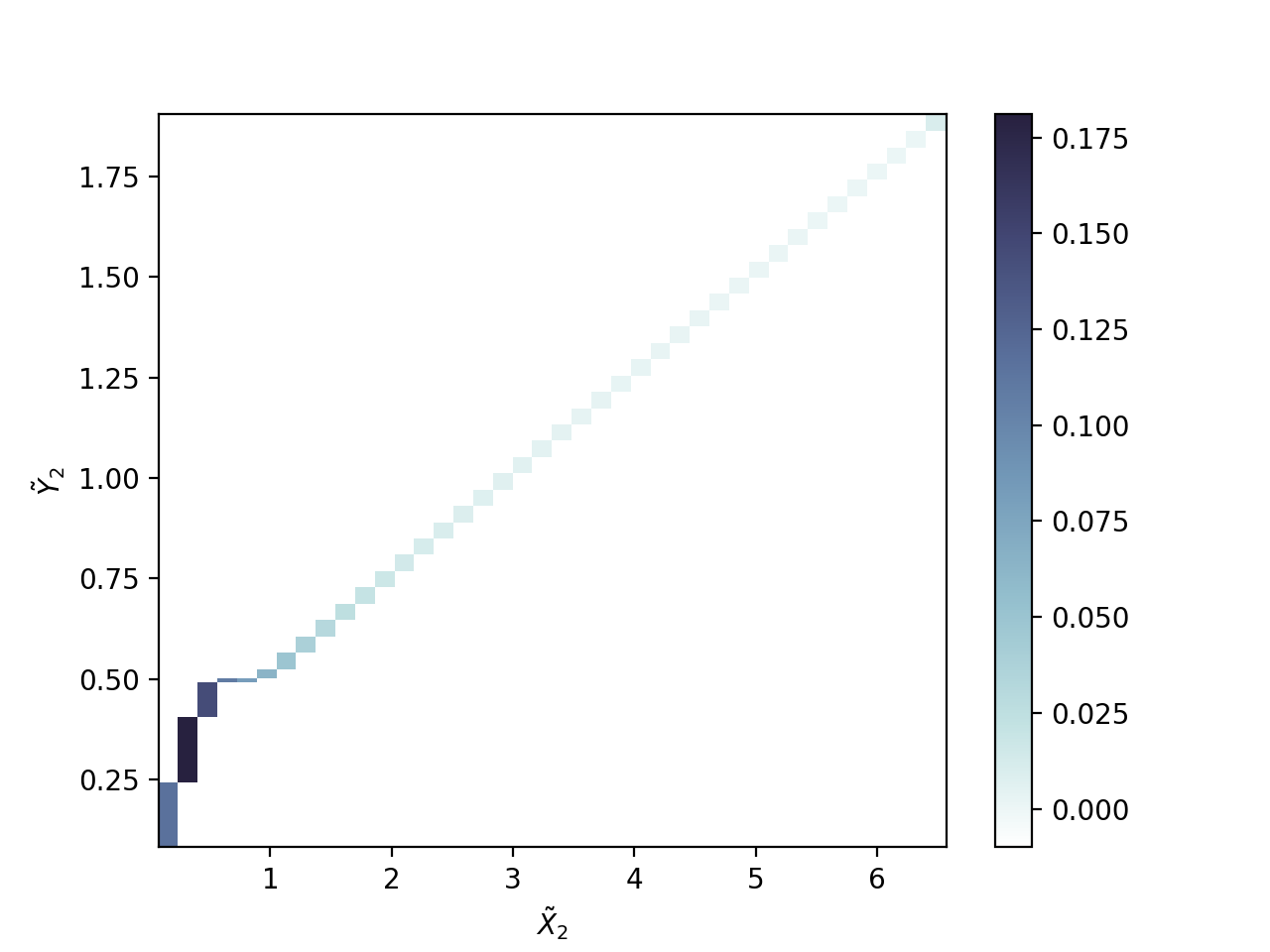}
  	\label{fig:pmfX2Y2:det}
  \end{subfigure}%
  \begin{subfigure}{.5\linewidth}
  	\includegraphics[width=\linewidth]{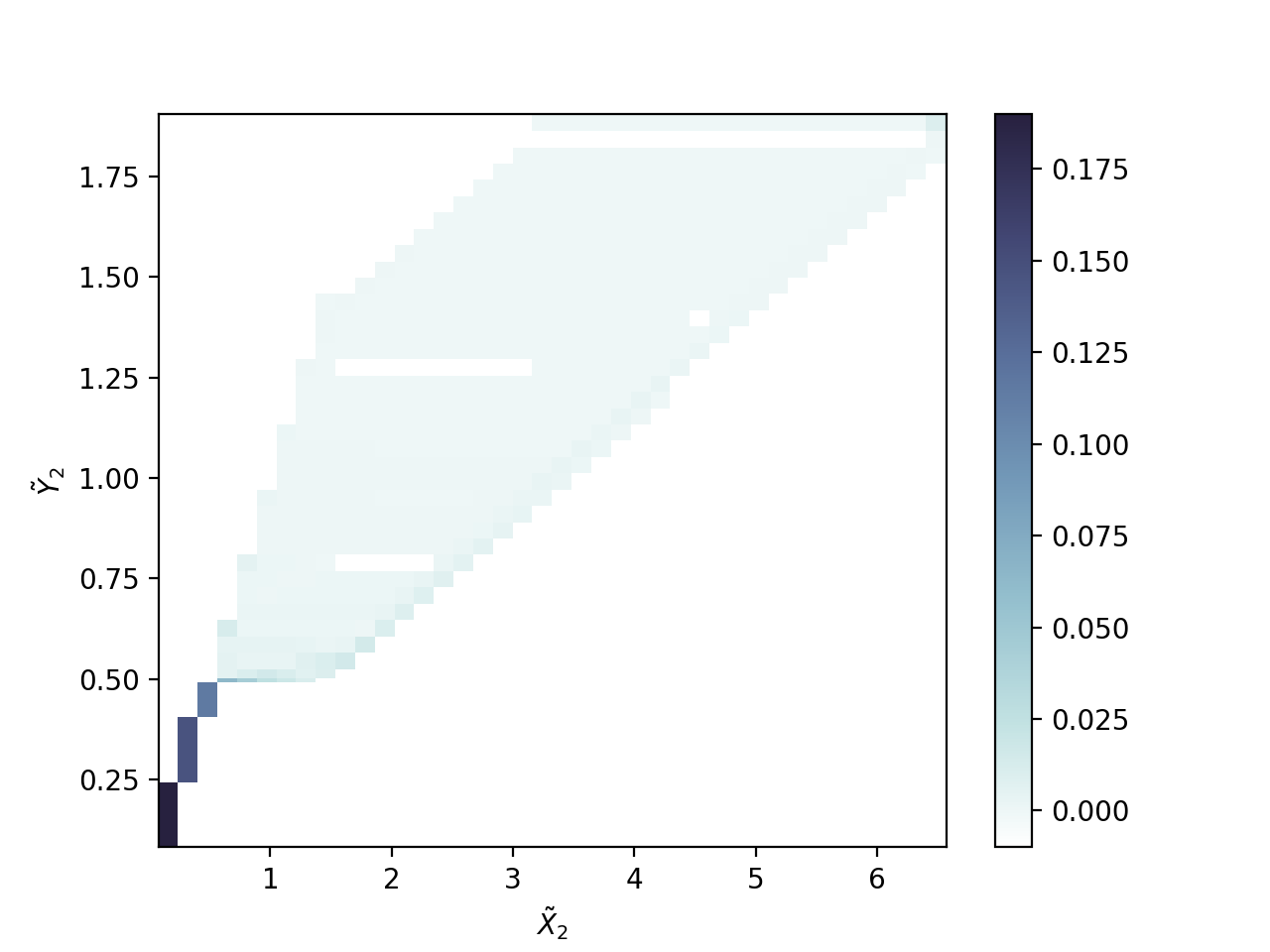}
  	\label{fig:pmfX2Y2:OT}
  \end{subfigure}
  \caption{Probability mass functions for $\tilde{X}_1$ and $\tilde{Y}_1$ (upper row) and $\tilde{X}_2$ and $\tilde{Y}_2$ (lower row) under $\eta_{\mathrm{Det}}$ (left column) and $\etastar$ (right column).}
  \label{fig:pmfXsYs}
\end{figure}
\newpage
With the dependence structure indicated by $\eta_{\mathrm{Det}}$, $\tilde{R}_1$ and $\tilde{R}_2$ are independent and $\Var_{\eta_{\mathrm{Det}}}(\tilde{R}_1+\tilde{R}_2) = 1.05314$. The variance after optimization is $\Var_{\etastar}(\tilde{R}_1+\tilde{R}_2) = 0.82875$, which represents an improvement of $21.31\%$. As can be seen from Figures~\ref{fig:pmfXsYs} and \ref{fig:remaining_pmf}, this is achieved in two ways: first, the joint distributions of $(\tilde{X}_1,\tilde{Y}_1)$ and $(\tilde{X}_2,\tilde{Y}_2)$ are changed in such a way that, under $\etastar$, $\tilde{Y}_1$ has positive probability of being close to $\tilde{X}_1$ and $\tilde{Y}_2$ has positive probability of being close to $\tilde{X}_2$ thus allowing the reinsured amounts $\tilde{R}_1$ and $\tilde{R}_2$ to take smaller values than under $\eta_{\mathrm{Det}}$.
Secondly, the variance is also reduced by introducing a positive dependence relationship between $\tilde{Y}_1$ and $\tilde{Y}_2$. This may be slightly counter-intuitive at first if we think of the variance as being reduced by making $\tilde{R}_1$ and $\tilde{R}_2$ counter-monotonic. While Figure~\ref{fig:remaining_pmf} seems to indicate that $\tilde{R}_1$ and $\tilde{R}_2$ have a negative dependence structure, we cannot fully expect it to be counter-monotonic. Under the assumption of counter-monotonicity, small values for $\tilde{R}_1$ would be coupled with larger values of $\tilde{R}_2$, which would imply that values of $\tilde{Y}_1$ close to $\tilde{X}_1$ would be paired with values of $\tilde{Y}_2$ far away from $\tilde{X}_2$. Since $\tilde{Y}_1$ and $\tilde{Y}_2$ are bounded by $\tilde{X}_1$ and $\tilde{X}_2$, this would imply that small values for $\tilde{Y}_1$ would be coupled with larger values of $\tilde{Y}_2$. However, this argument lacks to take into account the fact that the reduction in variance can also be achieved by introducing a different dependence relationship between $\tilde{X}_1$ and $\tilde{Y}_2$, and between $\tilde{X}_2$ and $\tilde{Y}_1$.
\begin{figure}[H]
  \begin{subfigure}{.5\linewidth}
  	\includegraphics[width=\linewidth]{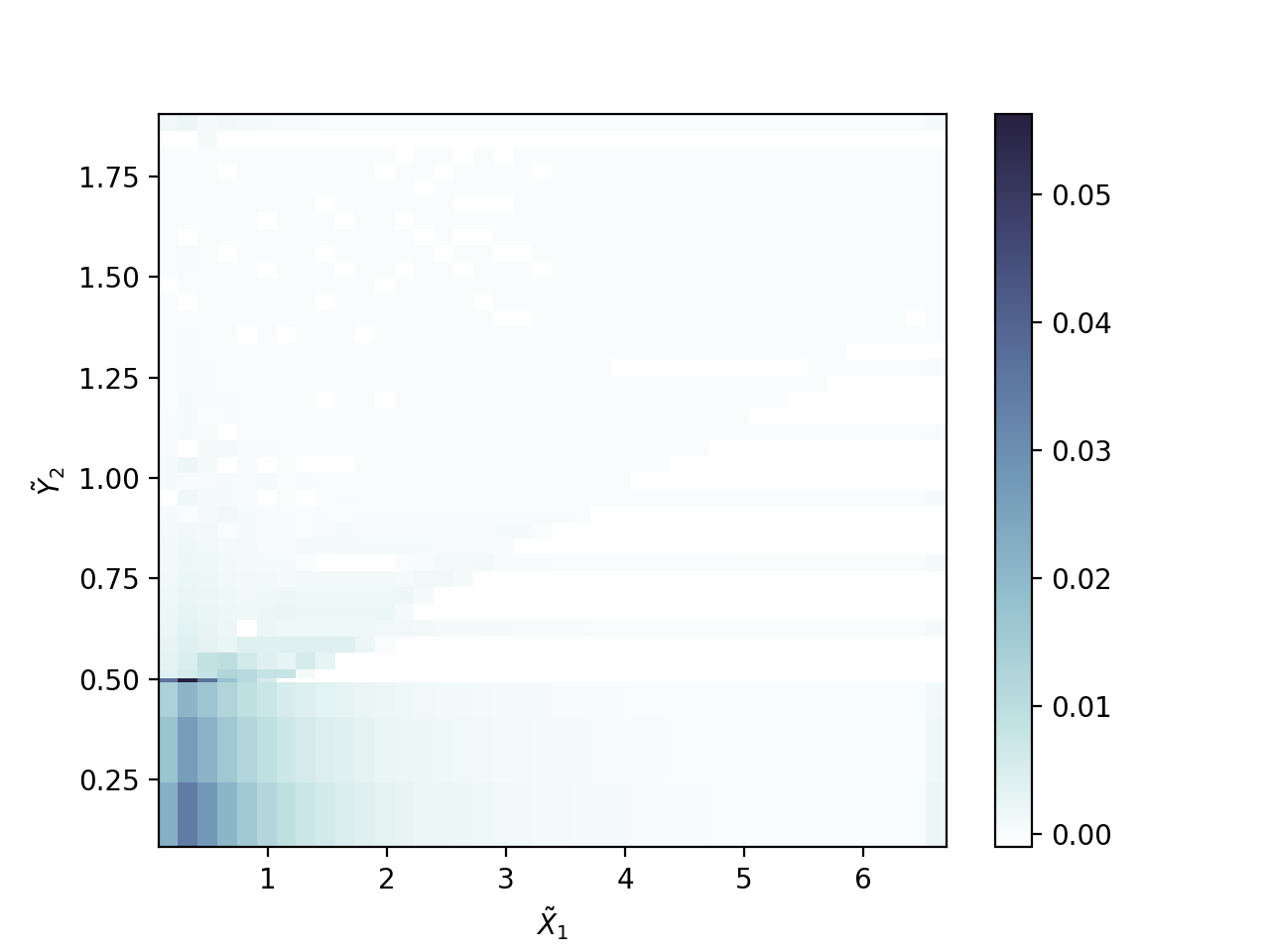}
  	\label{fig:pmfX1Y2:OT}
  \end{subfigure}%
  \begin{subfigure}{.5\linewidth}
  	\includegraphics[width=\linewidth]{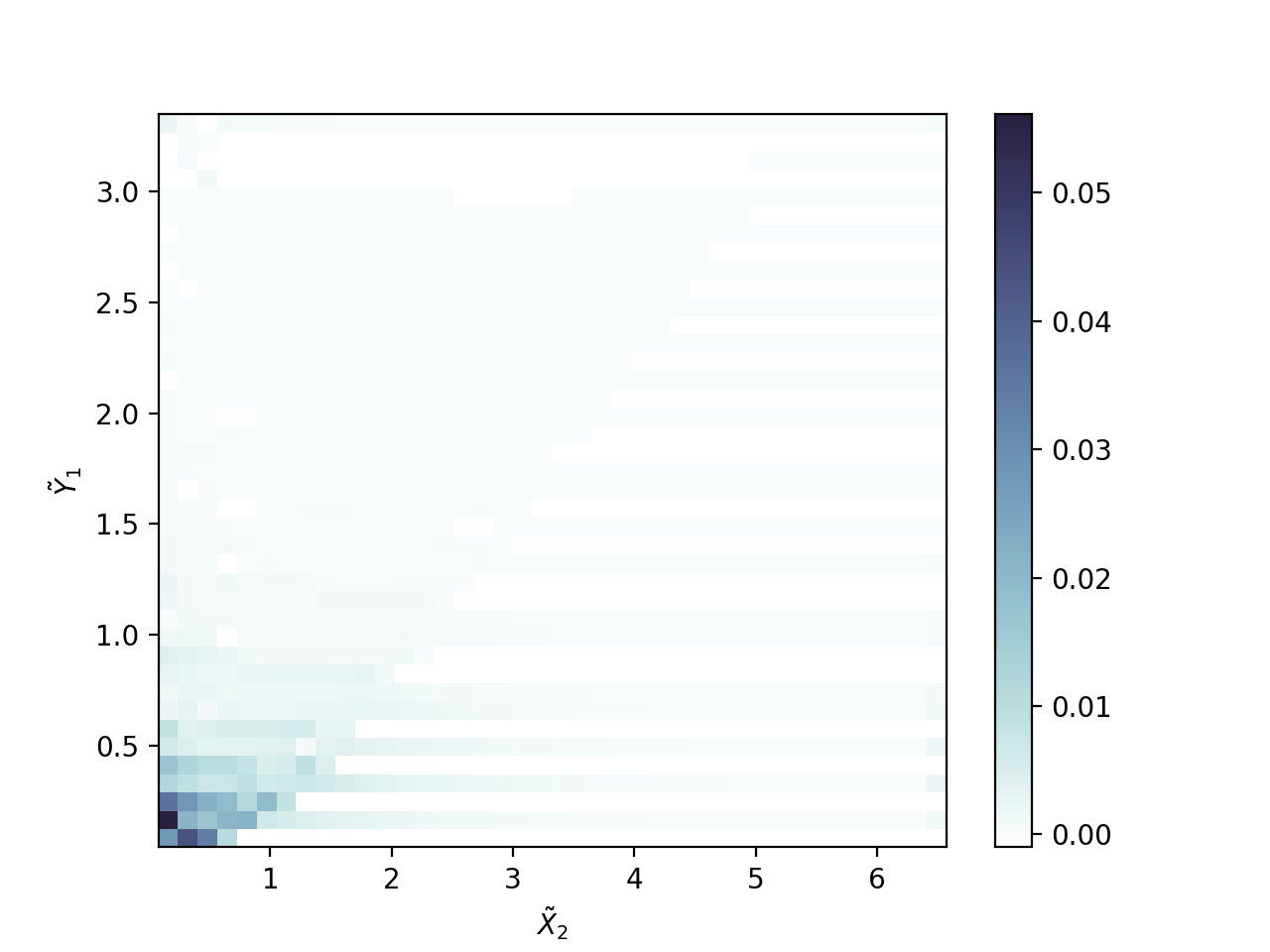}
  	\label{fig:pmfX2Y1:OT}
  \end{subfigure}
  \begin{subfigure}{.5\linewidth}
  	\includegraphics[width=\linewidth]{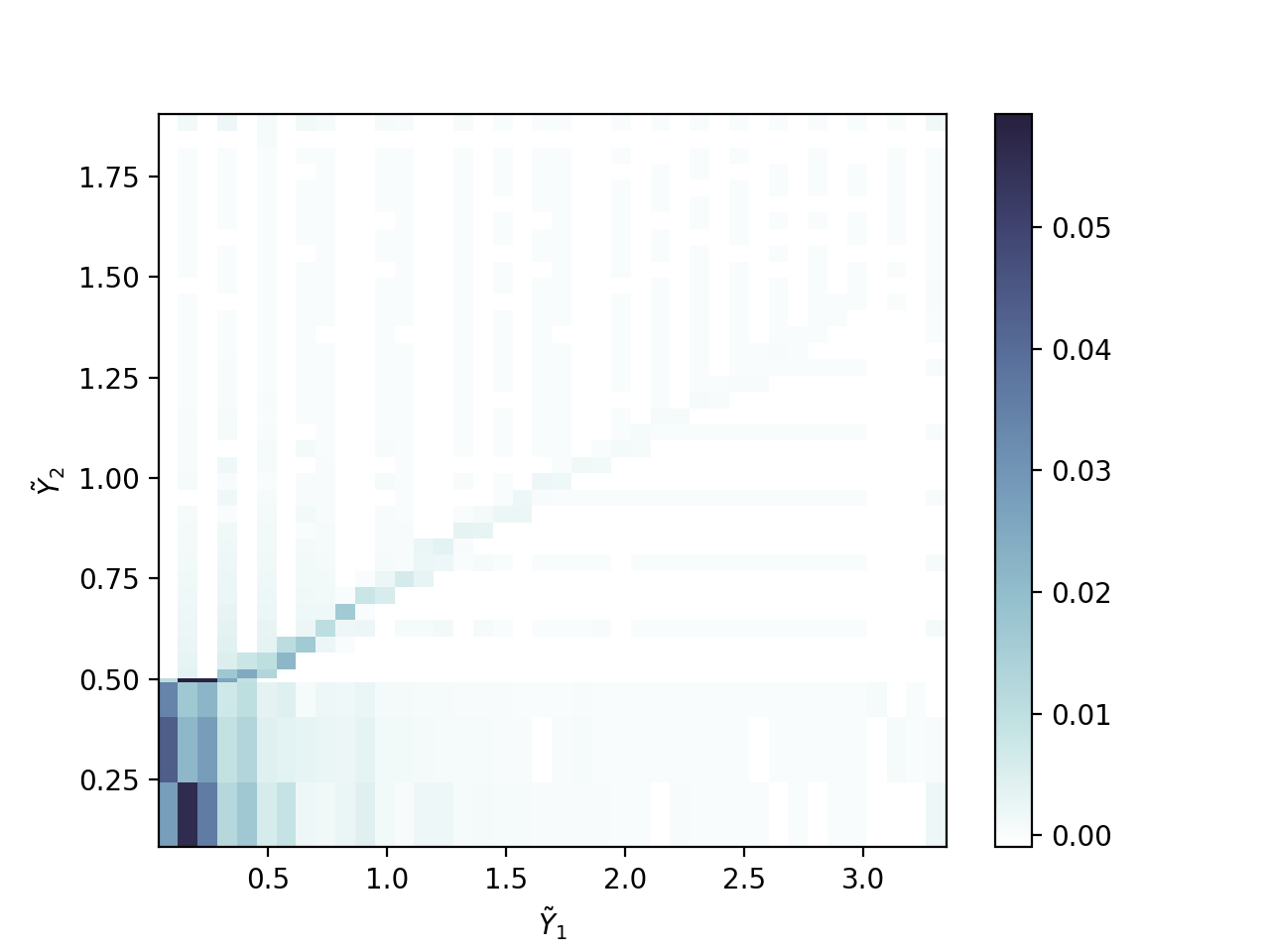}
  	\label{fig:pmfY1Y2:OT}
  \end{subfigure}%
  \begin{subfigure}{.5\linewidth}
  	\includegraphics[width=\linewidth]{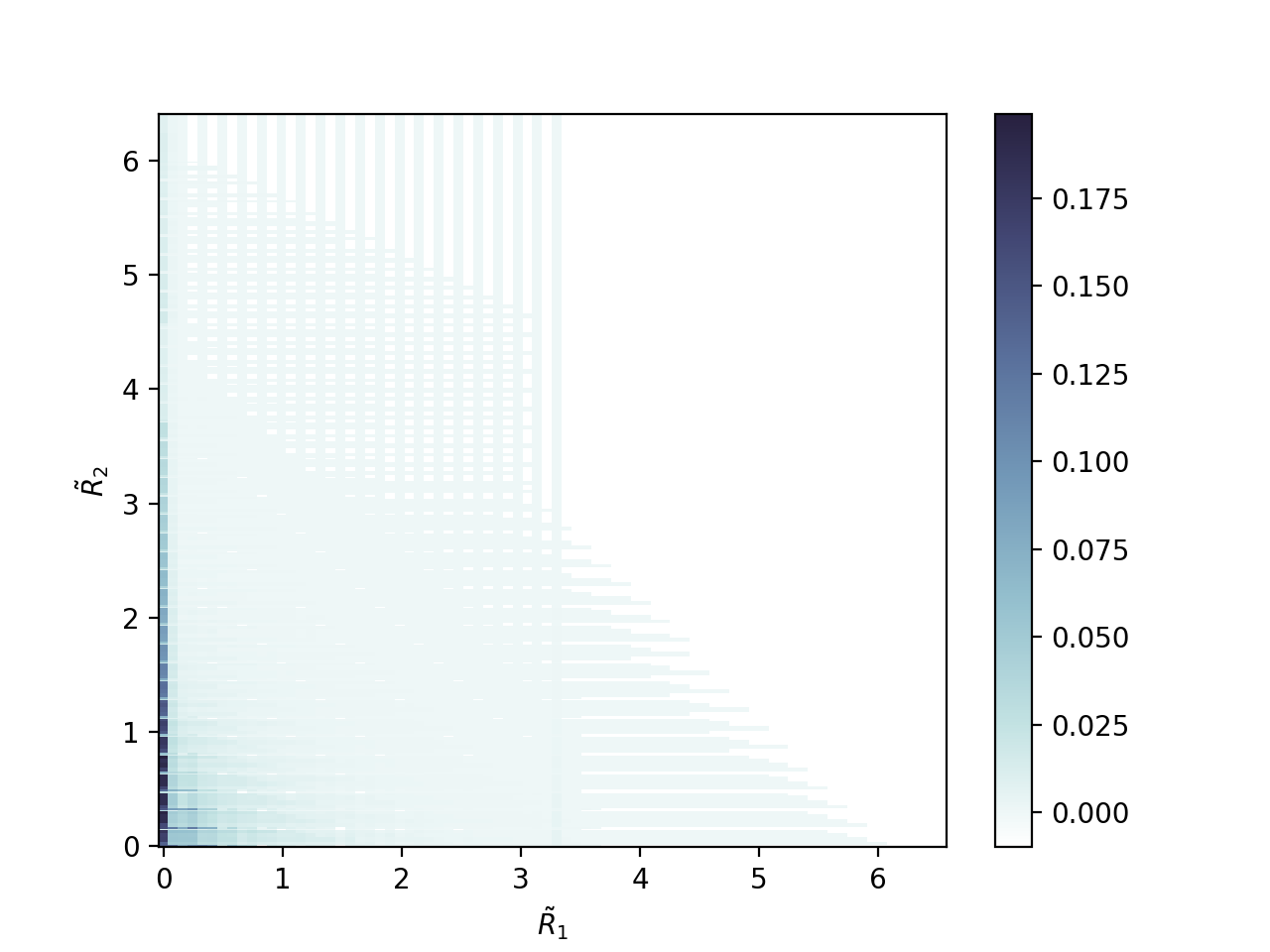}
  	\label{fig:pmfR1R2:OT}
  \end{subfigure}
  \caption{Probability mass functions for the joint distributions under $\etastar$ of $(\tilde{X}_1,\tilde{Y}_2)$ (top left), $(\tilde{X}_2,\tilde{Y}_1)$ (top right), $(\tilde{Y}_1,\tilde{Y}_2)$ (bottom left) and $(\tilde{R}_1,\tilde{R}_2)$ (bottom right).}
  \label{fig:remaining_pmf}
\end{figure}

\begin{figure}[H]
  \begin{subfigure}{.5\linewidth}
  	\includegraphics[width=\linewidth]{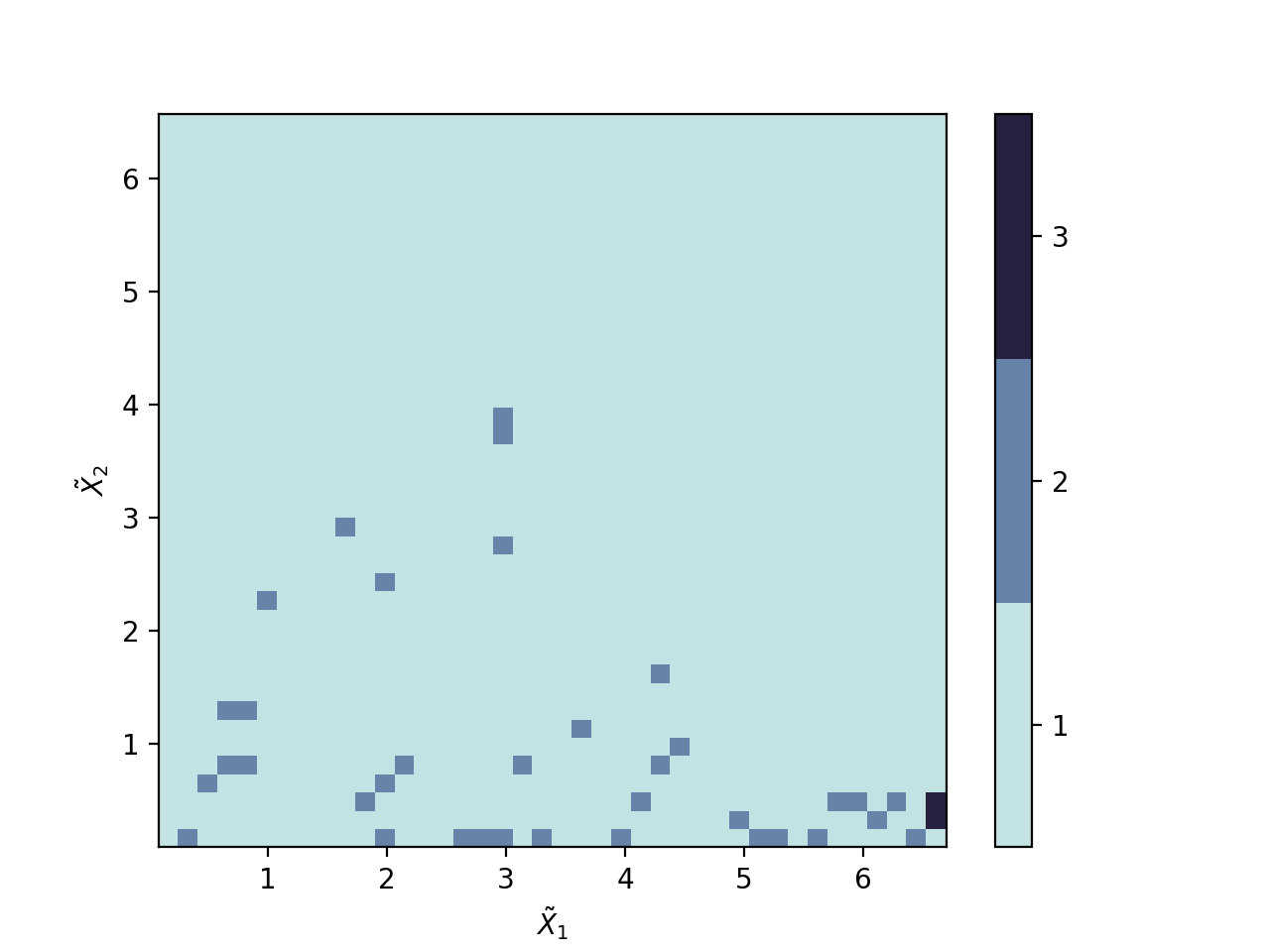}
  	\label{fig:suppX1Y2Y1:OT}
  \end{subfigure}%
  \begin{subfigure}{.5\linewidth}
  	\includegraphics[width=\linewidth]{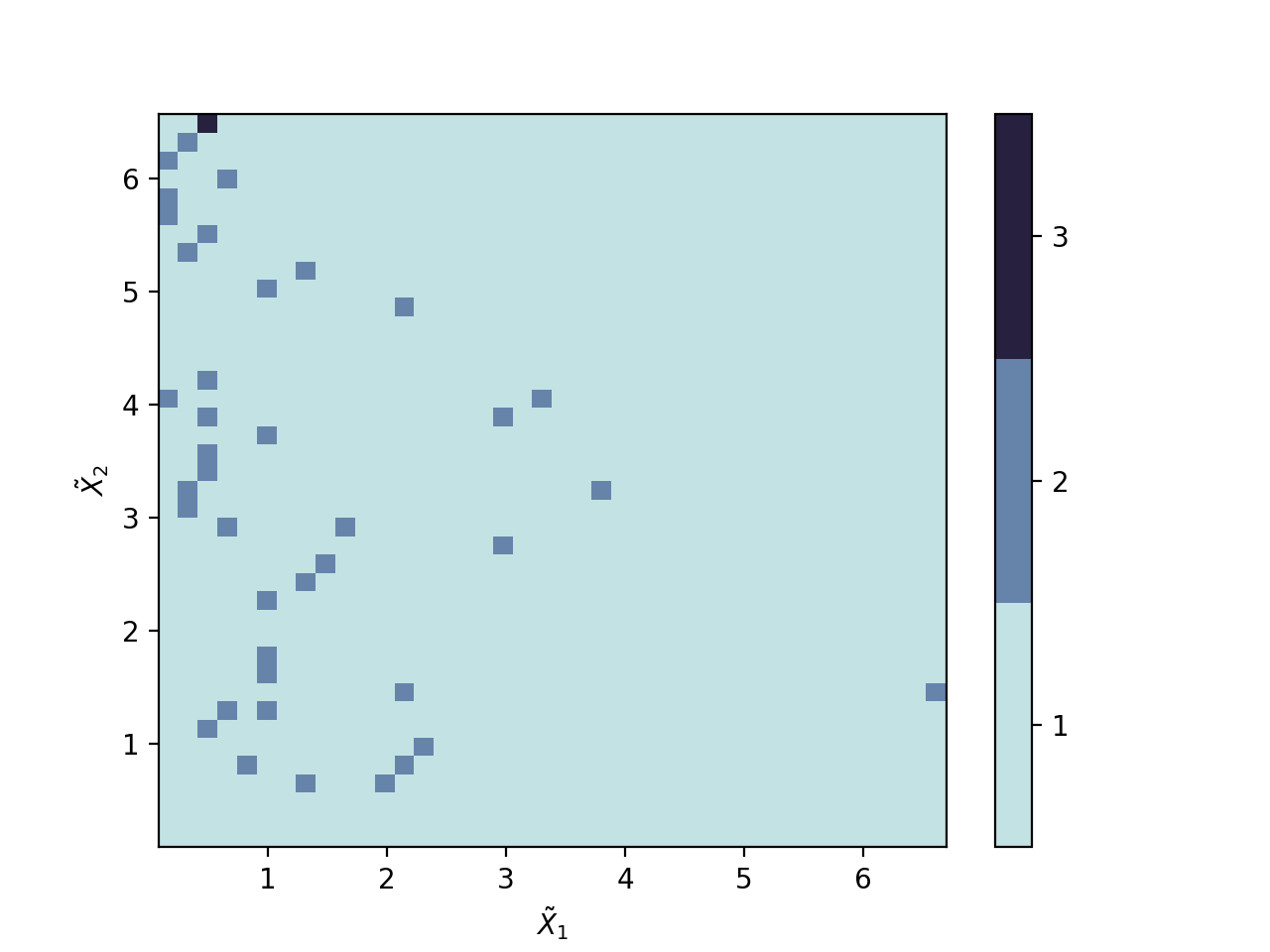}
  	\label{fig:suppX1Y2Y2:OT}
  \end{subfigure}
  \caption{Cardinality of the supports of the conditional distributions of $\tilde{Y}_1$ (left) and $\tilde{Y}_2$ (right) under $\etastar$ given $\tilde{X}_1$ and $\tilde{X}_2$.}
  \label{fig:suppX1Y2Ys}
\end{figure}
Curiously enough, observe that the optimal joint distributions of $(\tilde{X}_1,\tilde{Y}_2)$ and $(\tilde{X}_2,\tilde{Y}_1)$ have a tendency to be concentrated in the upper left corner (as well as in the area where $\tilde{Y}_2\leq 0.5$ for $(\tilde{X}_1,\tilde{Y}_2)$). This seems to indicate that $\tilde{Y}_2$ ``uses'' the extra degree of freedom in the distribution $(\tilde{X}_1,\tilde{Y}_2)$ to compensate for the behavior when $\tilde{X}_2\leq 0.5$ (in which we necessarily must have $\tilde{Y}_2=\tilde{X}_2$, regardless of the joint distribution) and for the constraint $\tilde{Y}_2\leq \tilde{X}_2$. Similarly, $\tilde{Y}_1$ ``uses'' the extra degree of freedom in the distribution $(\tilde{X}_2,\tilde{Y}_1)$ to compensate for the constraint $\tilde{Y}_1\leq \tilde{X}_1$. Finally, from Figure \ref{fig:pmfXsYs} we can observe that, under $\etastar$, there is not a deterministic association between $\tilde{X}_1$ and $\tilde{Y}_1$ nor between $\tilde{X}_2$ and $\tilde{Y}_2$. 

In contrast to Examples~\ref{ex:ex2-1:expectation_and_variance} and \ref{ex:ex2-3:VaR_and_expectation}, the randomization of $\tilde{Y}_1$ for given $\tilde{X}_1$ is not only due to the realization of $\tilde{X}_2$ (and vice versa), as the conditional distributions of $\tilde{Y}_1$ and $\tilde{Y}_2$ given $(\tilde{X}_1,\tilde{X}_2)$ are not concentrated in one point, cf.\ Figure~\ref{fig:suppX1Y2Ys}.
That is, for minimizing the variance, in this example one uses a degree of randomness external to $(\tilde{X}_1,\tilde{X}_2)$ (like for instance a lottery). This external randomness helps, in a way, to reach rather intuitive joint distributions of the variables involved. {\color{black} Observe that, while this does not discard deterministic solutions, this shows that external randomness can also result in optimal contracts.} \hfill $\diamond$ 
\end{example}

\section{Conclusion and Outlook}\label{secconc}
In this paper we provided a link between the two fields of optimal transport and optimal reinsurance, which allows for a reinterpretation of some classical optimal reinsurance results, a characterization of conditions for the optimality of deterministic treaties as well as the derivation of new results, extending some previous approaches in the literature. In a number of concrete examples we illustrated the benefits of this additional perspective on optimal reinsurance problems. We also established an example with two insurers and a reinsurer where external randomness in the contract specification effectively increases the efficiency.\\

While we worked out several concrete cases in detail, there are a number of directions that could be interesting for future research. In particular, we like to mention the following here. In this paper, we did not explicitly deal with the particular case of convex risk measures, an important class of risk measures on which a large part of the literature is focused (cf.\ \cite{balbas2022risk,cheung2014optimal}). When the risk measure is given, for example, by
\begin{equation*}
	\calP(\eta)=\int u\left(w-x-\int y\,{\pi_{2}}_{\#}\eta)(dy) \right)T_{\#}\eta(dx)
\end{equation*}
for $u:\bbR\to\bbR$ a convex and non-decreasing function, then $\calP$ is convex and its optimization can be addressed by means of Propositions~\ref{prop:support_minimizer}, \ref{prop:suppport_minimum} or \ref{prop:monotonic_rea}, depending on the nature of the constraints. 
Also, for general convex risk measures, it seems that the dual representation is tightly connected with the existence of the directional derivatives at optimal reinsurance contracts, and it will be interesting to connect the present approach with concepts from duality theory.
\appendix
\section{{\color{black}Details of examples}}\label{appa}
{\color{black}Here we present detailed computations for the examples presented in Section \ref{sec5}. As some of the examples require lengthy computations, we have split their proofs into steps.
\subsection{Continuation of Example~\ref{ex:ex2-1:expectation_and_variance}} Recall the definition of the functionals $\calP$ and $\calG$: taking expectations in the equation,
\[
\sum_{i=1}^{n} (X_i-R_i +(1+\beta_i)\bbE[R_i]),
\]
we see that the risk measure can be chosen as
\begin{equation*}
\calP(\eta)=\int\sum_{i=1}^{n}\beta_{i}y_{i} \; \eta(dx,dy).
\end{equation*}
In our notation, $\calG$ can then be written as
\[
\calG(\eta)= \int\left(\sum_{i=1}^{n}(x_{i}-y_{i})\right)^{2}-\left(\sum_{i=1}^{n}\int(x_{i}-y_{i})\;\eta(dx,dy)\right)^{2}\; \eta(dx,dy) - c.
\] 
\vspace*{0.05cm}

\noindent\textit{Step 1: Both $\calP$ and $\calG$ satisfy the assumptions of Propositions~\ref{prop:support_minimizer} and \ref{prop:suppport_minimum} with $\lambda^\ast>0$.}\\[0.2cm]
Letting $\etastar$ be any optimal contract, we observe that the functionals $\calP$ and $\calG$ satisfy the conditions of  with functions $p_{\etastar},g_{\etastar}:\bbR^{n}_{+}\times \bbR^{n}_{+}\to \bbR$ given by
\begin{align*}
    p_{\etastar}(x,y) &= \sum_{i=1}^{n}\beta_{i}y_{i},\\
    g_{\etastar}(x,y) &= \left(\sum_{i=1}^{n}(x_{i}-y_{i})\right)^{2}-2\sigma\sum_{i=1}^{n}(x_{i}-y_{i}),
\end{align*}
with $\sigma = \sum_{i=1}^{n}\overline{(\pi_i\circ T)_{\#}\etastar}$. Observe further that if $\eta$ is the contract that specifies full reinsurance, then
\[
\calG(\etastar) +d\calG(\etastar;\eta-\etastar)= -2c <0,
\]
so it follows that there exists $\lambda^{\ast}\geq 0$ such that the support of $\etastar$ is contained in the minima of $h(x,\cdot) = p_{\etastar}(x,\cdot)+\lambda^{\ast} g_{\etastar}(x,\cdot)$. Note, however, that we cannot have $\lambda^{\ast}=0$, since the minimum of $p(x,\cdot)$ occurs at $y=0$ and this would imply that $\etastar$ is the contract for which no reinsurance takes place, violating the condition $\calG(\etastar)<0$. Recall that by Proposition \ref{prop:suppport_minimum}, we have to identify the minimizers of $h(x,\cdot)$ to describe the support of $\etastar$, so the rest of the proof focuses on finding this support.\\[0.2cm]
\textit{Step 2: The minimizers of $h(x,\cdot)$ agree with the minimizers of $a(x,\cdot)$ where
\[
a(x,y) = -\sum_{i=1}^{n}(\beta_{n}-\beta_{i})y_{i} +\lambda^{\ast}\left(s_Y-s_X+\sigma+\frac{\beta_{n}}{2\lambda^{\ast}}\right)^{2},
\]
with $s_X = \sum_{i=1}^{n}x_i$ and $s_Y = \sum_{i=1}^{n}y_i$.}\\[0.2cm]}
Notice that $h$ can be written as
\[ 
    h(x,y) = \sum_{i=1}^{n}\beta_{i}y_{i}+\lambda^{\ast} \left(s_X - s_Y\right)^{2}-2\lambda^{\ast}\sigma\left(s_X-s_Y\right).
\]
The partial derivatives of $h$ are 
\begin{equation*}
    \frac{\partial h(x,y)}{\partial y_{k}}=\beta_{k} -2\lambda^{\ast}\left(\sum_{i=1}^{n}(x_{i}-y_{i})\right)+2\lambda^{\ast}\sigma, \quad k=1,\ldots,n,
\end{equation*}
and the condition $\beta_i\neq \beta_j$ for $i\neq j$ implies that no $y$ in $\bbR^{n}_{+}$ solves $\partial h(x,y)/\partial y_{k}=0$ for every $k=1,\ldots,n$. Notice that this implies that the minima must occur in the boundary of $\partial A_{R}$. Rewriting $h$ in the form
\begin{align*}
    h(x,y) &= \sum_{i=1}^{n}(\beta_{i}-\beta_{n})y_{i} + \beta_{n}s_Y + \lambda^{\ast}\left(s_Y^{2}+2\left(\sigma-s_X\right)s_Y\right)+\lambda^{\ast}s_X^{2}-2\lambda^{\ast}\sigma s_X\\
    & = -\sum_{i=1}^{n}(\beta_{n}-\beta_{i})y_{i} +\lambda^{\ast}\left(s_Y-s_X+\sigma+\frac{\beta_{n}}{2\lambda^{\ast}}\right)^{2}+\lambda^{\ast}s_X^{2}-2\lambda^{\ast}\sigma s_X\\ 
    &\qquad\quad- \lambda^{\ast}\left(\sigma + \frac{\beta_{n}}{2\lambda^{\ast}}-s_X\right)^{2},
\end{align*}
we see that in the last line only the first two terms depend on $y$, so it suffices to find the $y$'s minimizing the function
\[
a(x,y) = -\sum_{i=1}^{n}(\beta_{n}-\beta_{i})y_{i} +\lambda^{\ast}\left(s_Y-s_X+\sigma+\frac{\beta_{n}}{2\lambda^{\ast}}\right)^{2}.
\]
\textit{Step 3: If $y^{\ast}$ minimizes $a(x,\cdot)$ in $[0,x]$ and $y_{i}^{\ast}>0$ for some $i\geq 2$, then $y_{j}^{\ast} = x_j$ for $j=1,\ldots,i-1$.}\\[0.2cm]
Indeed, arguing by contradiction, assume there exists $j<i$ such that $y_{j}^{\ast} < x_j$. Define $\tilde{y}$ as $\tilde{y}_{k}=y_{k}^{\ast}$ if $k\neq i,j$, $\tilde{y}_{j}=y_{j}^{\ast}+\min(x_{j}-y_{j}^{\ast},y_{i}^{\ast})$ and $\tilde{y}_{i}=y_{i}^{\ast}-\min(x_{j}-y_{j}^{\ast},y_{i}^{\ast})$. Then
\begin{itemize}
    \item either $\tilde{y}_{j}=x_{j}$ or $\tilde{y}_{i}=0$, and
    \item $a(x,y^{\ast})-a(x,\tilde{y}) = (\beta_i-\beta_j)\min(x_{j}-y_{j}^{\ast},y_{i}^{\ast})>0$.
\end{itemize}
The last statement clearly contradicts the minimum property of $y^{\ast}$, so we conclude that the above claim is true. Coincidentally, this also implies that if $y_i^{\ast}=0$ for some $i$, then $y_j^{\ast}=0$ for all $j\geq i$.

From this it follows that for each fixed $x$, there are at most $n$ candidate solutions of the form $(x_1,\ldots,x_{j-1},y_j,0,\ldots,0)$, where $y_j$ is the only point minimizing the mapping
\begin{equation}\label{eq:appendix:ex23:eq1}
    y\mapsto -\sum_{i=1}^{j-1}(\beta_{n}-\beta_{i})x_{i}-(\beta_{n}-\beta_{j})y +\lambda^{\ast}\left(y-\sum_{i=j}^nx_i+\sigma+\frac{\beta_{n}}{2\lambda^{\ast}}\right)^{2}
\end{equation}
in $[0,x_j]$. For each $j=1,\ldots,n$ we denote the associated candidate solution by $y^{(j)}$. Observe that
\begin{equation}\label{eq:appendix:ex23:eq2}
    y^{(j)}_{j} =  \begin{cases}
0, & \text{if } \sum_{i=j}^nx_i<\sigma+\frac{\beta_{j}}{2\lambda^{\ast}} \\
\sum_{i=j}^nx_i- \sigma-\frac{\beta_{j}}{2\lambda^{\ast}}, & \text{if } \sum_{i=j}^nx_i\geq \sigma+\frac{\beta_{j}}{2\lambda^{\ast}},\text{ but } \sum_{i=j+1}^nx_i<\sigma+\frac{\beta_{j}}{2\lambda^{\ast}} \\
x_j, & \text{if } \sum_{i=j+1}^nx_i\geq \sigma+\frac{\beta_{j}}{2\lambda^{\ast}}
\end{cases}.
\end{equation}
Let $j^\ast$ be the index defined by
\[
j^\ast = \max\left\{j\in\{1,\ldots,n\}\mid \sum_{i=j}^nx_i\geq \sigma+\frac{\beta_{j}}{2\lambda^{\ast}}\right\}
\]
or $j^\ast=1$ if the set on the right is empty. We claim that $y^{(j^\ast)}$ minimizes $a(x,\cdot)$. Indeed, notice that for $k<j^{\ast}-1$, we have $y^{(k)}_r=y^{(k+1)}_r=x_r$ for $r=1,\ldots, k$, $y^{(k)}_{k+1}=x_{k+1}-y^{(k+1)}_{k+1}=0$ and $y^{(k)}_r=y^{(k+1)}_r=0$ for $r>k+1$. Hence
\begin{align*}
    a(x,y^{(k)})-a(x,y^{(k+1)}) &= (\beta_n-\beta_{k+1})x_{k+1} + \lambda^{\ast}\left(-\sum_{i=k+1}^nx_i+\sigma+\frac{\beta_{n}}{2\lambda^{\ast}}\right)^{2}\\
    &\qquad-\lambda^{\ast}\left(-\sum_{i=k+2}^nx_i+\sigma+\frac{\beta_{n}}{2\lambda^{\ast}}\right)^{2}\\
    &=\lambda^{\ast}x_{k+1}\left(\sum_{i=k+1}^nx_i-\sigma-\frac{\beta_{k+1}}{2\lambda^{\ast}}\right)\geq 0.
\end{align*}
The only difference between $y^{(j^\ast-1)}$ and $y^{(j^\ast)}$ is in the $j^\ast$-th entry. However, since $y^{(j^\ast)}_{j^\ast}$ minimizes the map in \eqref{eq:appendix:ex23:eq1}, we have $a(x,y^{(j^\ast)})\leq a(x,y^{(j^\ast-1)})$. For $k>j^\ast$,
\[
a(x,y^{(k+1)})-a(x,y^{(k)}) = -\lambda^{\ast}x_{k+1}\left(\sum_{i=k+1}^nx_i-\sigma-\frac{\beta_{k+1}}{2\lambda^{\ast}}\right)\geq 0,
\]
so we only need to show $a(x,y^{(j^\ast)})\leq a(x,y^{(j^\ast+1)})$. However, if $\tilde{y}$ is such that $\tilde{y}_i=x_i$ for $i\leq j^{\ast}$ and zero otherwise, we can easily see that
\[
a(x,y^{(j^\ast+1)})-a(x,\tilde{y}) = -\lambda^{\ast}x_{j^\ast+1}\left(\sum_{i=j^\ast+1}^nx_i-\sigma-\frac{\beta_{j^\ast+1}}{2\lambda^{\ast}}\right)\geq 0,
\]
and the minimal property of $y^{(j^\ast)}_{j^\ast}$ implies $a(x,y^{(j^\ast)})\leq a(x,\tilde{y})$. Hence, $a(x,y^{(j^\ast)})\leq a(x,y^{(j)})$ for every $j=1,\ldots,n$.\\[0.2cm]
{\color{black}\textit{Step 4: The optimal solution is as given in Equation \eqref{ex:ex2-1:expectation_and_variance:solution}.}\\[0.2cm]}
Noticing that \eqref{eq:appendix:ex23:eq2} can be written in the form 
\[
y^{(j^\ast)}_{j^\ast} = \min\left(\left(\sum_{i=j^\ast}^{n}x_i - \frac{\beta_{j^\ast}}{2\lambda^{\ast}}-\sigma\right)_{+},x_{j^\ast}\right),
\]
the condition $\beta_1<\ldots<\beta_n$ implies that the optimal reinsurance contract is as given in \eqref{ex:ex2-1:expectation_and_variance:solution}.

\subsection{Continuation of Example~\ref{ex:ex2-3:VaR_and_expectation}}\label{appb} 
{\color{black}Recall that the functionals are given by
\begin{align*}
    \calP(\eta) &= \int\sum_{i=1}^{n}\beta_{i}y_{i} \; \eta(dx,dy),\\
    \calG(\eta) &= \widehat{\VaR}_{\alpha}({T_{S}}_{\#}\eta) - c,
\end{align*}
where $0<\beta_{1}<\cdots < \beta_{n}$, $c\geq 0$ and $T_{S}:\mathbb{R}^{n}\times \mathbb{R}^{n}\to \bbR$ is the linear operator defined by $T_S(x,y)=\sum_{i=1}^{n}(x_i-y_i)$. Observe that we can restrict ourselves to the case $0< c < \VaR\left(\sum_{i=1}^{n} X_i\right)$ to avoid the optimal contract being the one given by full or no reinsurance.\\[0.2cm]
\textit{Step 1: Both $\calP$ and $\calG$ satisfy the assumptions of Propositions~\ref{prop:support_minimizer}}\\[0.2cm]
Let $\etastar$ be an optimal reinsurance contract and $v^{\ast} = \widehat{\VaR}_{\alpha}({T_{S}}_{\#}\eta)$. Set
\begin{equation}
\calC=\{\eta\in\scrM\mid \widehat{\VaR}_{\alpha}({T_{S}}_{\#}\eta)=v^{\ast}\}.
\end{equation}
Observe that $\etastar\in \calC$, $\calC$ is convex and for every $\eta\in \calC$,
\[
d\calP(\etastar;\eta-\etastar) = \int\sum_{i=1}^{n}\beta_{i}y_{i} \; (\eta-\etastar)(dx,dy) \text{ and } d\calG(\etastar;\eta-\etastar) = 0.
\]
Hence, the conditions of Proposition~\ref{prop:support_minimizer} are satisfied with $p(x,y) = \sum_{i=1}^{n}\beta_{i}y_{i}$ (independent of $\etastar$).\\[0.2cm]
\textit{Step 2: Setting
\begin{align}
\begin{split}\label{eq:set_D_ex}
D_1& = \left\{(x,y)\in \calA_{R}\mid \sum_{i=1}^{n}(x_i-y_i)<v^{\ast}\right\},\\
D_2& = \left\{(x,y)\in \calA_{R}\mid \sum_{i=1}^{n}(x_i-y_i)>v^{\ast}\right\},
\end{split}
\end{align}
then, for every $(x,y)\in\mathrm{Supp}(\etastar)$, we have 
\[
I(x,y)=\{z\in [0,x]\mid (x,z)\in \overline{D_1}\}
\]
if $(x,y)\in D_1$, while $0\in I(x,y)$ if $(x,y)\in D_2$, where $I(x,y)$ is the set over which minimization occurs in \eqref{eq:support_minimum}.}\\[0.2cm]}
Let $(x,y)\in\mathrm{Supp}(\etastar)$ and recall the measures
\[
\mu_{x,y,t,\varepsilon}(A) = \etastar(A)-\etastar(A\cap B_{\varepsilon}(x,y))+\etastar((A-(0,t))\cap B_{\varepsilon}(x,y)),
\]
appearing in Proposition~\ref{prop:support_minimizer}. We wish to identify the sets over which minimization occurs in \eqref{eq:support_minimum}. We proceed as in the proof of that proposition, i.e., we see that for a suitable chosen $t$, we can find a $\delta>0$ such that $\mu_{x,y,t,\varepsilon}\in \calC$ for $\varepsilon<\delta$. Denote the set from the conclusion of Proposition~\ref{prop:support_minimizer} by $I(x,y)$.

Assume first that $(x,y)\in D_1\setminus \partial \calA_{R}$ and let $T$ denote the set of $t$'s such that $(x,y)\in D_1-(0,t)$, i.e., $T=\{-y<t<x-y\mid (x,y+t)\in D_1\}$. For $t\in T$, let
\[
\delta = \min\left\{d((x,y),\partial \calA_{R}),d((x,y),\partial \calA_{R}-(0,t)),d((x,y),\partial D_1),d((x,y),\partial D_1 -(0,t))\right\}.
\]
With this definition, $\delta>0$ and for every $0\leq \varepsilon<\delta$, we have that $\mu_{x,y,t,\varepsilon}\in\scrM$. We claim that, further, $\widehat{\VaR}_{\alpha}({T_{S}}_{\#}\mu_{x,y,t,\varepsilon})=v^{\ast}$. Indeed, notice that $B_{\varepsilon}(x,y)\subset D_1\cap (D_1-(0,t))$ and therefore
\[
{T_{S}}_{\#}\mu_{x,y,t,\varepsilon}((v^{\ast},\infty)) = \mu_{x,y,t,\varepsilon}(D_2) = \etastar(D_2)\leq \alpha.
\]
Now, let $\varepsilon' = \min\left\{d((x,y),\partial D_1),d((x,y),\partial D_1 -(0,t))\right\}-\varepsilon$. We have $\varepsilon'>0$ by choice of $\varepsilon$. For $v^{\ast}-\varepsilon' < u < v^{\ast}$, let 
\[
    D_u = \left\{(x,y)\in \calA_{R}\mid \sum_{i=1}^{n}(x_i-y_i)>u\right\}.
\]
Observe that, for these $u$'s, we still have $B_{\varepsilon}(x,y)\subset D_u^{c}\cap (D_u^{c}-(0,t))$ and therefore
\[
{T_{S}}_{\#}\mu_{x,y,t,\varepsilon}((u,\infty)) = \mu_{x,y,t,\varepsilon}(D_u) = \etastar(D_u)> \alpha.
\]
Hence $\widehat{\VaR}_{\alpha}({T_{S}}_{\#}\mu_{x,y,t,\varepsilon})=v^{\ast}$. In a similar manner, we see that if $t\not\in T$ (but still $-y<t<x-y$), we have $\widehat{\VaR}_{\alpha}({T_{S}}_{\#}\mu_{x,y,t,\varepsilon})>v^{\ast}$ for $\varepsilon$ small enough, so that 
\begin{equation}\label{eq:appendix:exVar_expectation:Ixy}
    I(x,y)=\{z\in [0,x]\mid (x,z)\in \overline{D_1}\}.
\end{equation}

Now we consider $(x,y)\in D_1\cap \partial \calA_{R}$ and proceed similarly by defining
\begin{align*}
    \delta &= \min\left\{d((x,y),\partial \calA_{R}-(0,t)),d((x,y),\partial D_1 -(0,t))\right\},\\
    \varepsilon' &= d((x,y),\partial D_1 -(0,t))-\varepsilon,
\end{align*}
for $t$'s such that $-x_0<t_i\leq 0$ if $y_{0,i}=x_{0,i}$, $0\leq t_i< x_0$ if $y_{0,i}=0$, $-x_{0,i}<t_i< x_{0,i}-y_{0,i}$ if $0<y_{0,i}<x_{0,i}$ and $(x,y)\in D_1-(0,t)$. We still obtain $I(x,y)$ as in \eqref{eq:appendix:exVar_expectation:Ixy}. Thus, for $(x,y)\in D_1$,
\[
I(x,y) = \left\{z\in [0,x]\mid \sum_{i=1}^{n}(x_i-z_i)\leq v^{\ast}\right\}.
\]

Likewise, we can show that for $(x,y)\in D_2$ we can define  $T=\{-y<t<x-y\mid (x,y+t)\in D_2\}$ and obtain
\[
I(x,y) \supset \left\{z\in [0,x]\mid \sum_{i=1}^{n}(x_i-z_i)\geq v^{\ast}\right\}.\footnote{In this case we obtain only an inclusion as opposed to an equality. This comes from the fact that for $t\not\in T$ we can only ensure $\widehat{\VaR}_{\alpha}({T_{S}}_{\#}\mu_{x,y,t,\varepsilon})\leq v^{\ast}$. As a matter of fact, for $\varepsilon$ small enough, we will actually have an equality and therefore $I(x,y)=[0,x]$, though we do not need this information to find the solution.}
\]
In particular, $0\in I(x,y)$ for $(x,y)\in D_2$.\\[0.2cm]
{\color{black}\textit{Step 3: If
\begin{align*}
D_3 = \left\{(x,y)\in \calA_{R}\mid \sum_{i=1}^{n}(x_i-y_i)=v^{\ast}\right\} = \calA_{R}\setminus (D_1\cup D_2)
\end{align*}
then\begin{align*}
    \mathrm{Supp}(\etastar)\subset D_3\cup \{(x,0)\mid x\in \bbR^{n}_{+}\}.
\end{align*}}
\vspace*{0.05cm}}
Let $(x,y)$ be an arbitrary point in the support of $\etastar$. Since $\calA_{R} = D_1\cup D_2\cup D_3$, $(x,y)$ is in one (and only one) of these three sets. If $(x,y)\in D_1$, the computations from the previous step, together with the conclusion of Proposition~\ref{prop:support_minimizer}, show that
\[
y \in \argmin \left\{\sum_{i=1}^{n}\beta_{i}z_{i}\mid z\in [0,x], \sum_{i=1}^{n}x_i-v^{\ast}\leq \sum_{i=1}^{n}z_i\right\}.
\]
This is a linear optimization exercise with linear constraints and one can see that the condition $(x,y)\in D_1$ implies $\sum_{i=1}^{n}x_i\leq v^{\ast}$ and $y=0$. Similarly, if $(x,y)\in D_2$, $\sum_{i=1}^{n}x_i>v^{\ast}$ and $y=0$. Therefore,
\begin{equation}\label{eq:appendix:exVar_expectation:supp_etastar}
    \mathrm{Supp}(\etastar)\subset D_3\cup \{(x,0)\mid x\in \bbR^{n}_{+}\}.
\end{equation}
{\color{black}Before proceeding to the next step, we notice that just as in the proof of Proposition~\ref{prop:suppport_minimum}, it is intuitive that if $(x,y)\in \mathrm{Supp}(\etastar)\cap D_3$, then $y$ should be in the minima of $p(x,\cdot)$. We will prove this later, but first, we are required to identify these minima.\\[0.2cm]
\textit{Step 4: Recalling that $0<\beta_{1}<\cdots < \beta_{n}$, then for fixed $x\in\bbR^{n}$ such that $\sum_{i=1}^{n}x_{j}> v^{\ast}$, the minimum of $y\mapsto\sum_{i=1}^{n}\beta_{i}y_{i}$ on 
\[
[0,x]\cap \left\{y\in \bbR^{n}_+\mid \sum_{i=1}^{n}y_{i}=\sum_{i=1}^{n}x_{i}-v^{\ast}\right\}
\]
is unique and is given by
\begin{equation}\label{eq:appendix:exVar_expectation:ystar}
    y^{\ast}(x) = \left(\min\left(Q_{1}(x),x_1\right),\ldots,\min\left(Q_{n-1}(x),x_{n-1}\right),Q_{n}(x)\right),
\end{equation}
where 
\[
Q_i(x)=\left(\sum_{j=i}^{n}x_{j}-v^{\ast}\right)_{+}, \quad i=1,\ldots,n.
\]
}}
Indeed, let $\tilde{y}$ denote any minimum and for $i=1,\ldots, n$, let $C_i$ be given by
\[
    C_i=\left\{z\in \bbR^{n}_+\mid \sum_{j=i+1}^{n}z_{j}\leq v^{\ast}, \sum_{j=i}^{n}z_{j}> v^{\ast}\right\}.
\]
These sets are disjoint and $x$ belongs to one (and only one) of them. Let $i_0$ denote the index such that $x\in C_{i_0}$. We claim that $\tilde{y}_j=x_j$ for $j<i_0$. For otherwise, we can let $j_0$ be the first index for which this does not happen or $j_0=1$ in case $\tilde{y}_j<x_j$ for all $j$'s. Therefore, $\tilde{y}_j=x_j$ for $j<j_0$, while $\tilde{y}_{j_0}<x_{j_0}$. If $\tilde{y}_k=0$ for all $k>j_0$, then the condition $\sum_{i=1}^{n}\tilde{y}_{i}=\sum_{i=1}^{n}x_{i}-v^{\ast}$ implies
\[
\tilde{y}_{j_{0}}=\sum_{i=j_{0}}^{n}x_{i}-v^{\ast}>x_{j_{0}}
\]
since $j_0<i_0$. Thus, there exists $k_0>j_0$ such that $y_{k_0}>0$. However, in this case, the vector $\hat{y}\in \bbR^{n}$ given by $\hat{y}_{j}=\tilde{y}_j$ if $j\neq j_0,k_0$, $\hat{y}_{j_{0}}=\min(x_{j_0},\tilde{y}_{j_{0}}+\tilde{y}_{k_{0}})$ and $\hat{y}_{k_{0}}=\max(0,\tilde{y}_{k_{0}}+\tilde{y}_{j_{0}}-x_{j_0})$ satisfies all the constraints and further $p(x,\hat{y})<p(x,\tilde{y})$ as $\beta_{j_0}<\beta_{k_0}$. Since this is a contradiction to the optimality of $\tilde{y}$, we therefore conclude $\tilde{y}_{j}=x_j$ for all $j<i_0$ and
\[
\sum_{j=i_{0}}^{n}\tilde{y}_{j}=\sum_{j=i_{0}}^{n}x_{i}-v^{\ast}.
\]
This now implies $\tilde{y}_{i_{0}} = \sum_{j=i_{0}}^{n}x_{i}-v^{\ast}\leq x_{i_{0}}$, since
\begin{align*}
    \beta_{i_{0}}\tilde{y}_{i_{0}} = \beta_{i_{0}}\left(\sum_{j=i_{0}}^{n}x_{i}-v^{\ast}\right)=\beta_{i_{0}}\sum_{j=i_{0}}^{n}y_{j}\leq \sum_{j=i_{0}}^{n}\beta_jy_{j}
\end{align*}
for any other $y\in\bbR^{n}_{+}$ with $y_j=x_j$ for $j<i_0$ (which is a condition for optimality). Hence
\[
\tilde{y} = \left(x_1,x_2,\ldots,x_{i_{0}-1},\sum_{j=i_{0}}^{n}x_{i}-v^{\ast},0,\ldots,0\right),
\]
which is equivalent to \eqref{eq:appendix:exVar_expectation:ystar} on $C_{i_0}$ and shows unicity of $\tilde{y}$.

Note further that for any $x\in \bbR^{n}_+$ with $\sum_{i=1}^{n}x_{j}< v^{\ast}$ we anyway have
\[
y^\ast(x) \in \argmin \left\{\sum_{i=1}^{n}\beta_{i}z_{i}\mid z\in [0,x], \sum_{i=1}^{n}x_i-v^{\ast}\leq \sum_{i=1}^{n}z_i\right\}.
\]
{\color{black}Using this, we now formalize the statement made before Step 4.\\[0.2cm]
\textit{Step 5: We have
\begin{align*}
    \mathrm{Supp}(\etastar)\subset \{(x,0)\mid x\in \bbR^{n}_{+}\}\cup\{\left(x,y^{\ast}(x)\right)\mid x\in \bbR^{n}_{+}\}.
\end{align*}
}}
From \eqref{eq:appendix:exVar_expectation:ystar} it follows that the mapping $x\mapsto y^{\ast}(x)$ is continuous. If for some $(x,y)\in \mathrm{Supp}(\etastar)\cap D_3$, $y\neq 0$, we had $y\neq y^{\ast}(x)$, then, by continuity of $q(x):=p(x,y^{\ast}(x))$, we could find an open ball $B\subset \bbR^{n}_{+}\times \bbR^{n}_{+}$ around $(x,y)$ not intersecting $\bbR^{n}_{+}\times \{0\}$ and such that $q(x') < p(x',y')$ for every $(x',y')\in B$. We could then modify $\etastar$ by moving all the mass in $B$ to the graph of $q$, obtaining a measure with the same VaR and a strictly smaller value for $\calP$. For example, we can use the following measure
\begin{equation*}
    \eta(A) = \etastar(A\setminus B) + \mu(Q^{-1}(A)\cap B').
\end{equation*}
Here $Q=(\mathrm{Id},q)$ and $B'\subset \pi_1(B)$ is an open ball around $x$ such that $\etastar(B)=\mu(B')$, which exists by absolute continuity of $\mu$.
It therefore follows that 
\begin{align*}
    \mathrm{Supp}(\etastar)\subset \{(x,0)\mid x\in \bbR^{n}_{+}\}\cup\{\left(x,y^{\ast}(x)\right)\mid x\in \bbR^{n}_{+}\}.
\end{align*}
\vspace{0.05cm}

\noindent{\color{black}\textit{Step 6: The optimal solution is deterministic and there exists $d\geq 0$ such that if $E=\{x\in \bbR^{n}_{+}\mid q(x)\leq d\}$, the optimal reinsurance contract is given by
\begin{equation}
    R(x) =  \begin{cases}
y^{\ast}(x), & \text{if } x\in E \\
0, & \text{otherwise} .
\end{cases}
\end{equation}
}
From the previous step we only need to show that for $(x,y)\in \mathrm{Supp}(\etastar)$ with $\sum_{i=1}^{n}x_{j}> v^{\ast}$ we either have $y= 0$ or $y = y^\ast(x)$, but not both. Let $d$ be chosen so that $\mu(E)=1-\alpha$, which is possible due to absolute continuity of $\mu$. Note that $d>0$, as otherwise $y^\ast(x)=0$ and the optimal contract is no reinsurance. A similar argument to the previous step shows that, in case $d>0$, there cannot be a point $(x,y)$ in the support of $\etastar$ such that $q(x)>d$. Note that by using the explicit definition of $y^\ast(x)$, we obtain 
\begin{equation*}
    R(x) =  \left(x_1,\ldots,x_{i-1},\sum_{j=i}^{n}x_j-v^{\ast},0,\ldots,0\right)
\end{equation*}
if $\sum_{j=1}^{i-1}\beta_jx_j+\beta_i\sum_{j=i}^{n}x_j-\beta_iv^{\ast}\leq d$, $\sum_{j=i+1}^{n}x_j\leq v^{\ast}$ and $\sum_{j=i}^{n}x_j> v^{\ast}$, and $R(x)=0$ otherwise.}

\subsection{{\color{black}Continuation of Example~\ref{ex:ex2}}}\label{appc}
Recall that we have shown that the optimal contract is deterministic and we are tasked to find $\nu$ such that the mapping \[
x\mapsto g_\nu(x):=F_\nu^{-1}\circ F_{\mu}(x).
\]
is optimal. We can phrase this problem in terms of functions: consider the operator $\calF:L^{2}([0,1])\to \bbR$ given by 
\[
\calF(f) = \int_{0}^{1}\left(F_{\mu}^{-1}(x)-f(x)\right)^{2}\;dx - \left(\int_{0}^{1}F_{\mu}^{-1}(x)-f(x) \; dx\right)^{2}.
\]
We want to minimize this functional subject to the constraints $0\leq f\leq F_{\mu}^{-1}$, $f$ non-decreasing and 
\begin{equation}\label{ex2:ex2_revisited:var_constraint}
\int_{0}^{1}f(x)^{2}\;dx - \left(\int_{0}^{1}f(x) \; dx\right)^{2} = c.
\end{equation}
We can first look at the problem that considers only the last constraint and examine the associated Lagrange operator, $\calL:L^{2}([0,1])\times\bbR \to \bbR$ given by
\[
\calL(f,\lambda) = \calF(f) + \lambda\int_{0}^{1}f(x)^{2}\;dx - \lambda\left(\int_{0}^{1}f(x) \; dx\right)^{2}-\lambda c.
\]
For each $f\in L^{2}([0,1])$ and $\lambda\in \bbR$, the functional derivative of this operator with respect to $h$ is the functional $\calL'(f,\lambda;\cdot):L^{2}([0,1])\to \bbR$ given by
\begin{align*}
\calL'(f,\lambda;h) &= 2\int_{0}^{1}(F_{\mu}^{-1}(x)-f(x))h(x)\;dx-2\int_{0}^{1}(F_{\mu}^{-1}(x)-f(x))\;dx\int_{0}^{1}h(x)\;dx\\
& + 2\lambda\int_{0}^{1}f(x)h(x)\;dx-2\lambda\int_{0}^{1}f(x)\;dx\int_{0}^{1}h(x)\;dx\\
& = 2\int_{0}^{1}\left(F_{\mu}^{-1}(x)+(\lambda-1)f(x)-\int_{0}^{1}(F_{\mu}^{-1}(y)+(\lambda-1)f(y))\;dy\right)h(x)\;dx.
\end{align*}
If $f$ is to be an extreme point of $\calL$, then $\calL'(f,\lambda;h)$ is to be zero for every $h\in L^{2}([0,1])$, which can happen if and only if
\[
F_{\mu}^{-1}(x)+(\lambda-1)f(x)-\int_{0}^{1}(F_{\mu}^{-1}(y)+(\lambda-1)f(y))\;dy = 0, \quad x\in [0,1].
\]
Hence, the function is constant and $F_{\mu}^{-1}=(1-\lambda)f + a$ for some $a\in \bbR$. Regardless of $a$, the function will satisfy \eqref{ex2:ex2_revisited:var_constraint} as long as we choose $\lambda$ such that $(1-\lambda)^{2} = \Var(X)/c$. Observe then that all the other constraints will be satisfied by choosing $\lambda = 1 - \sqrt{\Var(X)/c}$ and any $0 \leq a\leq F_{\mu}^{-1}(0)$.

\subsection{{\color{black}Continuation of Example~\ref{ex:ex6}}}\label{appd}

In this appendix we detail the procedure of binning of the $X_i$'s as well as the setting of the optimization procedure: let $q\in ]0,1[$ and define $u_{i} = F_{X_i}^{-1}(q)$, $i=1,2$. For $N\in\mathbb{N}$, we introduce the variables $\tilde{X}_1$ and $\tilde{X}_2$ such that, for $i=1$ and $i=2$,
\[
\tilde{X}_i = \frac{ku_{i}}{N} \text{ with probability } F_{X_i}\left(\frac{ku_{i}}{N}\right)-F_{X_i}\left(\frac{(k-1)u_{i}}{N}\right), \; k=1,\ldots, N-1
\]
and
\[
\tilde{X}_i = u_i \text{ with probability } 1-F_{X_i}\left(\frac{(N-1)u_{i}}{N}\right).
\]
Let $\tilde{\mu}$ denote the joint distribution of $(\tilde{X}_1,\tilde{X}_2)$ obtained from the joint distribution of $(X_1,X_2)$ as the push-forward of the binning function. As in the main part of the text, we let $\tilde{Y}_1$ and $\tilde{Y}_2$ be two random variables such that
\[
\tilde{Y}_1 \overset{d}{=} 0.5\tilde{X}_1, \quad \tilde{Y}_2 \overset{d}{=} \min(\tilde{X}_2,0.5)+0.25(\tilde{X}_2-0.95)_+,
\]
where $\overset{d}{=}$ denotes equality in distribution.Let $\tilde{\nu}_1$ and $\tilde{\nu}_2$ denote the distributions of $\tilde{Y}_1$ and $\tilde{Y}_2$. Our objective is setting a discrete (multi-marginal) OT problem with source distribution $\tilde{\mu}$ and targets $\tilde{\nu}_1$ and $\tilde{\nu}_2$ that minimizes the variance of $(\tilde{X}_1-\tilde{Y}_1)+(\tilde{X}_2-\tilde{Y}_2)$ (the reinsured amount).\\
By letting $M$ denote the amount of distinct values taken by $\tilde{Y}_2$, in the following we will implicitly assume that these random variables and distributions are defined on the space $\Omega' = \{1,\ldots, N\}^3\times \{1,\ldots, M\}$ and are such that, for example, $\tilde{Y}_1(i,j,k,l)=\tilde{Y}_1(k)$ is the $k$-th value taken by $\tilde{Y}_1$ in increasing order and $\tilde{\nu}_1(k) = \bbP[\tilde{Y}_1=\tilde{Y}_1(k)]$. Similar definitions apply to the other random variables.
The optimal transport problem is then equivalent to finding a 4-dimensional array $P$ in $\bbR^{N\times N\times N\times M}_{+}$ such that
\begin{equation}\label{eq:linear_program}
    \begin{split}
    \sum_{k,l} P_{i,j,k,l} = \tilde{\mu}(i,j),& \quad (i,j)\in \{1,\ldots, N\}^2,\\
    \sum_{i,j,l} P_{i,j,k,l} = \tilde{\nu}_1(k&), \quad k\in \{1,\ldots, N\},\\
    \sum_{i,j,k} P_{i,j,k,l} = \tilde{\nu}_2(l&), \quad l\in \{1,\ldots, N\},\\
    P_{i,j,k,l} = 0 \text{  if  } \tilde{Y}_1(k)>&\tilde{X}_1(i) \text{  or  } \tilde{Y}_2(l)>\tilde{X}_2(j),
    \end{split}
\end{equation}
and $P$ minimizes the sum 
$$\sum_{i,j,k,l}(\tilde{X}_1(i)-\tilde{Y}_1(k)+\tilde{X}_2(j)-\tilde{Y}_2(l))^2P_{i,j,k,l}$$
among all arrays satisfying \eqref{eq:linear_program}. This is a linear optimization problem. We want to use standard linear optimization techniques to solve it, so we cast these equations into standard form: we start by ``flattening'' $P$ into an element $p\in \bbR^{N^3M}$ by making the $(i,j,k,l)$ entry of $P$ into the $i+N(j-1)+N^2(k-1)+N^3(l-1)$ entry of $p$. Similarly, we let the cost be represented by a vector $c\in \bbR^{N^3M}$ with $i+N(j-1)+N^2(k-1)+N^3(l-1)$ entry equal to $(\tilde{X}_1(i)-\tilde{Y}_1(k)+\tilde{X}_2(j)-\tilde{Y}_2(l))^2$. Denoting by $\mathbbm{1}_N$ the column vector of dimension $N$ filled with ones, $\bbI_N$ the identity matrix of dimension $N$ and $\otimes$ the Kronecker product, the $(N^2+N+M)\times N^3M$ matrix
\[
A = \begin{bmatrix}
\mathbbm{1}_{NM}^\intercal\otimes \bbI_{N^2}\\
\mathbbm{1}_{M}^\intercal\otimes \bbI_{N}\otimes \mathbbm{1}_{N^2}^\intercal\\
\bbI_{M}\otimes \mathbbm{1}_{N^3}^\intercal
\end{bmatrix}
\]
can be used to encode the first three constraints in \eqref{eq:linear_program}. Indeed, if $\theta\in \bbR^{N^2+N+M}$ is given as the stacking of $\tilde{\mu}$, $\tilde{\nu}_1$ and $\tilde{\nu}_2$ (flattening first $\tilde{\mu}$, so that $\tilde{\mu}(i,j)$ is the $i+N(j-1)$ entry of $\theta$), we see that $P$ satisfies the first three equalities in \eqref{eq:linear_program} if and only if $Ap = \theta$. Finally, we can enforce the last constraint in \eqref{eq:linear_program} by deleting the columns and entries of $A, p$ and $c$ for which we have $\tilde{Y}_1(k)>\tilde{X}_1(i)$ or $\tilde{Y}_2(l)>\tilde{X}_2(j)$ (that is, we delete the $i+N(j-1)+N^2(k-1)+N^3(l-1)$ column of $A$ if this condition is satisfied). Let $B, q$ and $d$ denote, respectively, the matrix and vectors obtained this way and $K$ the number of columns that remained after this operation. The optimization problem then is of the form
\[
\text{Minimize } d^\intercal q \text{ subject to } Bq = \theta \text{ and } q\geq 0.
\]
Although potentially high-dimensional, this is a relatively easy linear optimization exercise. We solve the problem for $N=40$, using the \texttt{linprog} routine within the \texttt{SciPy} library, which implements the HiGHS software for linear optimization.

\begin{filecontents}{main.bib}
% Encoding: UTF-8

@article{monge1781memoire,
  title={M{\'e}moire sur la th{\'e}orie des d{\'e}blais et des remblais},
  author={Monge, Gaspard},
  journal={Mem. Math. Phys. Acad. Royale Sci.},
  pages={666--704},
  year={1781}
}

@book{henry2017model,
  title={Model-free hedging: A martingale optimal transport viewpoint},
  author={Henry-Labord{\`e}re, Pierre},
  year={2017},
  publisher={CRC Press}
}

@book{galichon2018optimal,
  title={Optimal transport methods in economics},
  author={Galichon, Alfred},
  year={2018},
  publisher={Princeton University Press}
}

@article{peyre2019computational,
  title={Computational optimal transport: With applications to data science},
  author={Peyr{\'e}, Gabriel and Cuturi, Marco and others},
  journal={Foundations and Trends{\textregistered} in Machine Learning},
  volume={11},
  number={5-6},
  pages={355--607},
  year={2019},
  publisher={Now Publishers, Inc.}
}

@book{villani2021topics,
  title={Topics in optimal transportation},
  author={Villani, C{\'e}dric},
  volume={58},
  year={2021},
  publisher={American Mathematical Soc.}
}

@Book{albrecher2017reinsurance,
  author     = {Albrecher, Hansj\"{o}rg and Beirlant, Jan and Teugels, Jozef L.},
  publisher  = {John Wiley \& Sons, Inc., Hoboken, NJ},
  title      = {Reinsurance: actuarial and statistical aspects},
  year       = {2017},
  isbn       = {978-0-470-77268-3},
  series     = {Wiley Series in Probability and Statistics},
  mrclass    = {62P05 (60G51 60G55 60K10 62G32 91B30)},
  mrnumber   = {3791478},
  mrreviewer = {R. A. Maller},
  pages      = {xi+352},
}
@article{gerberpafumi,
  title={Utility functions: from risk theory to finance},
  author={Gerber, Hans U and Pafumi, G{\'e}rard},
  journal={North American Actuarial Journal},
  volume={2},
  number={3},
  pages={74--91},
  year={1998}
}
@article{balbas2015optimal,
  title={Optimal reinsurance under risk and uncertainty},
  author={Balb{\'a}s, Alejandro and Balb{\'a}s, Beatriz and Balb{\'a}s, Raquel and Heras, Antonio},
  journal={Insurance: Mathematics and Economics},
  volume={60},
  pages={61--74},
  year={2015}
}
@article{asimit2013optimal,
  title={Optimal reinsurance in the presence of counterparty default risk},
  author={Asimit, Alexandru V and Badescu, Alexandru M and Cheung, Ka Chun},
  journal={Insurance: Mathematics and Economics},
  volume={53},
  number={3},
  pages={690--697},
  year={2013}
}
@article{cheung2010optimal,
  title={Optimal reinsurance revisited--a geometric approach},
  author={Cheung, Ka Chun},
  journal={ASTIN Bulletin},
  volume={40},
  number={1},
  pages={221--239},
  year={2010}
}
@article{cheung2014optimal,
  title={Optimal reinsurance under general law-invariant risk measures},
  author={Cheung, Ka Chun and Sung, KCJ and Yam, Sheung Chi Phillip and Yung, Siu Pang},
  journal={Scandinavian Actuarial Journal},
  volume={2014},
  number={1},
  pages={72--91},
  year={2014}
}
@article{cheung2017characterizations,
  title={Characterizations of optimal reinsurance treaties: a cost-benefit approach},
  author={Cheung, Ka Chun and Lo, Ambrose},
  journal={Scandinavian Actuarial Journal},
  volume={2017},
  number={1},
  pages={1--28},
  year={2017}
}
@article{balbas2022risk,
  title={Risk transference constraints in optimal reinsurance},
  author={Balb{\'a}s, Alejandro and Balb{\'a}s, Beatriz and Balb{\'a}s, Raquel and Heras, Antonio},
  journal={Insurance: Mathematics and Economics},
  volume={103},
  pages={27--40},
  year={2022}
}
@article{centeno,
  title={Optimal reinsurance},
  author={Centeno, Maria Lourdes and Sim{\~o}es, Onofre},
  journal={RACSAM-Revista de la Real Academia de Ciencias Exactas, Fisicas y Naturales. Serie A. Matematicas},
  volume={103},
  pages={387--404},
  year={2009}
}
@article{vincent2021,
  title={Structured reinsurance deals with reference to relative market performance},
  author={Vincent, L{\'e}onard and Albrecher, Hansj{\"o}rg and Krvavych, Yuriy},
  journal={Insurance: Mathematics and Economics},
  volume={101},
  pages={125--139},
  year={2021}
}
@article{boonen2021bowley,
  title={Bowley reinsurance with asymmetric information on the insurer's risk preferences},
  author={Boonen, Tim J and Cheung, Ka Chun and Zhang, Yiying},
  journal={Scandinavian Actuarial Journal},
  volume={2021},
  number={7},
  pages={623--644},
  year={2021}
}

@article{borch1960safety,
  title={The safety loading of reinsurance premiums},
  author={Borch, Karl},
  journal={Scandinavian Actuarial Journal},
  volume={1960},
  number={3-4},
  pages={163--184},
  year={1960}
}
@article{arrow63,
  title={Uncertainty and the welfare economics of medical care},
  author={Arrow, K.J.},
  journal={American Economic Review},
  volume={53},
  pages={941--973},
  year={1963}
}
@article{lo2017neyman,
  title={A {N}eyman-{P}earson perspective on optimal reinsurance with constraints},
  author={Lo, Ambrose},
  journal={ASTIN Bulletin: The Journal of the IAA},
  volume={47},
  number={2},
  pages={467--499},
  year={2017}
}
@article{asimit2021risk,
  title={Risk sharing with multiple indemnity environments},
  author={Asimit, Alexandru V and Boonen, Tim J and Chi, Yichun and Chong, Wing Fung},
  journal={European Journal of Operational Research},
  volume={295},
  number={2},
  pages={587--603},
  year={2021}
}
@article{boonen2022marginal,
  title={A marginal indemnity function approach to optimal reinsurance under the {V}ajda condition},
  author={Boonen, Tim J and Jiang, Wenjun},
  journal={European Journal of Operational Research},
  volume={303},
  number={2},
  pages={928--944},
  year={2022}
}
@article{boonen2021optimal,
  title={Optimal reinsurance with multiple reinsurers: distortion risk measures, distortion premium principles, and heterogeneous beliefs},
  author={Boonen, Tim J and Ghossoub, Mario},
  journal={Insurance: Mathematics and Economics},
  volume={101},
  pages={23--37},
  year={2021}
}
@article{zhuang2016marginal,
  title={Marginal indemnification function formulation for optimal reinsurance},
  author={Zhuang, Sheng Chao and Weng, Chengguo and Tan, Ken Seng and Assa, Hirbod},
  journal={Insurance: Mathematics and Economics},
  volume={67},
  pages={65--76},
  year={2016}
}
@article{cai2014optimal,
  title={Optimal reinsurance with regulatory initial capital and default risk},
  author={Cai, Jun and Lemieux, Christiane and Liu, Fangda},
  journal={Insurance: Mathematics and Economics},
  volume={57},
  pages={13--24},
  year={2014}
}
@article{kaluszka2004mean,
  title={Mean-variance optimal reinsurance arrangements},
  author={Kaluszka, Marek},
  journal={Scandinavian Actuarial Journal},
  volume={2004},
  number={1},
  pages={28--41},
  year={2004}
}
@book{ambrosio2021lectures,
  title={Lectures on optimal transport},
  author={Ambrosio, Luigi and Bru{\'e}, Elia and Semola, Daniele},
  year={2021},
  publisher={Springer}
}
@article{gajek2004reinsurance,
  title={Reinsurance arrangements maximizing insurer's survival probability},
  author={Gajek, Les{\l}aw and Zagrodny, Dariusz},
  journal={Journal of Risk and Insurance},
  volume={71},
  number={3},
  pages={421--435},
  year={2004}
}
@article{asimit2018insurance,
  title={Insurance with multiple insurers: A game-theoretic approach},
  author={Asimit, Vali and Boonen, Tim J},
  journal={European Journal of Operational Research},
  volume={267},
  number={2},
  pages={778--790},
  year={2018}
}
@article{boonen2023bowley,
  title={Bowley vs. {P}areto optima in reinsurance contracting},
  author={Boonen, Tim J and Ghossoub, Mario},
  journal={European Journal of Operational Research},
  volume={307},
  number={1},
  pages={382--391},
  year={2023}
}
@article{zhu2023equilibria,
  title={Equilibria and efficiency in a reinsurance market},
  author={Zhu, Michael B and Ghossoub, Mario and Boonen, Tim J},
  journal={Insurance: Mathematics and Economics},
  volume={113},
  pages={24--49},
  year={2023}
}
@article{cheung2023multi,
  title={Multi-constrained optimal reinsurance model from the duality perspectives},
  author={Cheung, Ka Chun and He, Wanting and Wang, He},
  journal={Insurance: Mathematics and Economics},
  volume={113},
  pages={199--214},
  year={2023}
}
@Article{albrecher2019randomized,
  author     = {Albrecher, Hansj\"{o}rg and Cani, Arian},
  journal    = {Insurance Math. Econom.},
  title      = {On randomized reinsurance contracts},
  year       = {2019},
  issn       = {0167-6687},
  pages      = {67--78},
  volume     = {84},
  doi        = {10.1016/j.insmatheco.2018.11.004},
  fjournal   = {Insurance: Mathematics \& Economics},
  mrclass    = {91B30},
  mrnumber   = {3906759},
  mrreviewer = {Wenjun Jiang},
  url        = {https://doi.org/10.1016/j.insmatheco.2018.11.004},
}

@InProceedings{borch1960attempt,
  author    = {Borch, Karl},
  booktitle = {Transactions of the 16th International Congress of Actuaries},
  title     = {An attempt to determine the optimum amount of stop loss reinsurance},
  year      = {1960},
  pages     = {597–610},
}

@Article{guerra2008optimal,
  author   = {Guerra, Manuel and Centeno, Maria de Lourdes},
  journal  = {Insurance Math. Econom.},
  title    = {Optimal reinsurance policy: the adjustment coefficient and the expected utility criteria},
  year     = {2008},
  issn     = {0167-6687},
  number   = {2},
  pages    = {529--539},
  volume   = {42},
  doi      = {10.1016/j.insmatheco.2007.02.008},
  fjournal = {Insurance: Mathematics \& Economics},
  url      = {https://doi.org/10.1016/j.insmatheco.2007.02.008},
}

@Article{guerra2012quantile,
  author    = {Guerra, Manuel and Centeno, Maria L},
  journal   = {Insurance: Mathematics and Economics},
  title     = {Are quantile risk measures suitable for risk-transfer decisions?},
  year      = {2012},
  number    = {3},
  pages     = {446--461},
  volume    = {50},
  publisher = {Elsevier},
}
@article{yong2024optimal,
  title={Optimal reinsurance design under distortion risk measures and reinsurer’s default risk with partial recovery},
  author={Yong, Yaodi and Cheung, Ka Chun and Zhang, Yiying},
  journal={ASTIN Bulletin},
  pages={1--29},
  year={2024}
} 
%article{chi2023optimal,
  title={Optimal risk management with reinsurance and its counterparty risk hedging},
  author={Chi, Yichun, Tao Hu and Yuxia Huang},
  journal={Insurance: Mathematics and Economics},
  volume={113},
  pages={274--292},
  year={2023}
}@article{cheung2014risk,
  title={Risk-minimizing reinsurance protection for multivariate risks},
  author={Cheung, Ka Chun, K. C. J. Sung and Philip Yam},
  journal={Journal of Risk and Insurance},
  volume={81},
  number={1},
  pages={219--236},
  year={2014}
}
@Article{Pesonen1984,
  author    = {Martti I. Pesonen},
  journal   = {Scandinavian Actuarial Journal},
  title     = {Optimal reinsurances},
  year      = {1984},
  number    = {2},
  pages     = {65-90},
  volume    = {1984},
  doi       = {10.1080/03461238.1984.10413754},
  eprint    = {https://doi.org/10.1080/03461238.1984.10413754},
  publisher = {Taylor & Francis},
  url       = {https://doi.org/10.1080/03461238.1984.10413754},
}

@Book{Rudin1991,
  author    = {Walter Rudin},
  publisher = {McGraw-Hill, Singapore},
  title     = {Functional Analysis},
  year      = {1991},
  edition   = {2nd},
  series    = {International Series in Pure and Applied Mathematics},
}

@Book{srivastava2008course,
  author    = {Srivastava, Sashi Mohan},
  publisher = {Springer Science \& Business Media},
  title     = {A course on Borel sets},
  year      = {2008},
  volume    = {180},
}

@Book{bertsekas1996stochastic,
  author    = {Bertsekas, Dimitri and Shreve, Steven E},
  publisher = {Athena Scientific},
  title     = {Stochastic optimal control: the discrete-time case},
  year      = {1996},
  volume    = {5},
}

@Book{definetti1940,
  author    = {De Finetti, Bruno},
  publisher = {Istituto italiano degli attuari},
  title     = {Il problema dei pieni},
  year      = {1940},
}
@article{cai2008optimal,
  title={Optimal reinsurance under VaR and CTE risk measures},
  author={Cai, Jun and Tan, Ken Seng and Weng, Chengguo and Zhang, Yi},
  journal={Insurance: Mathematics and Economics},
  volume={43},
  number={1},
  pages={185--196},
  year={2008}
}
@article{santambrogio2015optimal,
  title={Optimal transport for applied mathematicians},
  author={Santambrogio, Filippo},
  journal={Birk{\"a}user, NY},
  volume={55},
  number={58-63},
  pages={94},
  year={2015},
  publisher={Springer}
}
@inproceedings{kantorovich,
  title={On the translocation of masses},
  author={Kantorovich, Leonid V},
  booktitle={Dokl. Akad. Nauk. USSR (NS)},
  volume={37},
  pages={199--201},
  year={1942}
}
@Article{guerra2021reinsurance,
  author    = {Guerra, Manuel and de Moura, AB},
  journal   = {Insurance: Mathematics and Economics},
  title     = {Reinsurance of multiple risks with generic dependence structures},
  year      = {2021},
  pages     = {547--571},
  volume    = {101},
  publisher = {Elsevier},
}

@article{deprez1985convex,
  title={On convex principles of premium calculation},
  author={Deprez, Olivier and Gerber, Hans U},
  journal={Insurance: Mathematics and Economics},
  volume={4},
  number={3},
  pages={179--189},
  year={1985},
  publisher={Elsevier}
}

@article{wang2005optimal,
  title={Optimal insurance design under a value-at-risk framework},
  author={Wang, Ching-Ping and Shyu, David and Huang, Hung-Hsi},
  journal={The Geneva Risk and Insurance Review},
  volume={30},
  pages={161--179},
  year={2005},
  publisher={Springer}
}

@article{gangbo1996geometry,
author = {Wilfrid Gangbo and Robert J. McCann},
title = {{The geometry of optimal transportation}},
volume = {177},
journal = {Acta Mathematica},
number = {2},
publisher = {Institut Mittag-Leffler},
pages = {113 -- 161},
year = {1996},
}

@Comment{jabref-meta: databaseType:bibtex;}

\end{filecontents}
\bibliographystyle{abbrv}
\bibliography{main}
\end{document}